\newcommand*{\rom}[1]{\expandafter\@slowromancap\romannumeral #1@}
\DeclareMathAlphabet{\mathpzc}{OT1}{pzc}{m}{it}
 \colorlet{lgray}{white!80!black}
\colorlet{lred}{white!85!red}
\colorlet{lgreen}{white!60!green}
\colorlet{dgreen}{black!30!green}
\colorlet{lpurple}{white!60!purple}
\colorlet{lblue}{white!60!blue}
\definecolor{green}{rgb}{0.1,0.8,0.1}
\definecolor{yellow}{rgb}{1.0,0.85,0.25}
\definecolor{purple}{rgb}{1.0, 0, 1.0}
\definecolor{blue}{rgb}{0, 0, 1.0} 
\tikzstyle{unfused}=[lgray, line width=1.5pt, ->]
\tikzstyle{fused}=[lgray, line width=4pt, ->]
\tikzstyle{dual}=[black, line width=1pt, dashed]
\tikzstyle{lightdual}=[black, line width=0.5pt, dashed]
\tikzstyle{cut}=[black, line width=1.0pt]
\theoremstyle{plain}
\newtheorem{theorem}{Theorem}[section]
\newtheorem{lemma}[theorem]{Lemma}
\newtheorem{proposition}[theorem]{Proposition}
\newtheorem{corollary}[theorem]{Corollary}
\theoremstyle{definition}
\newtheorem{definition}[theorem]{Definition}
\newtheorem{remark}[theorem]{Remark}
\numberwithin{equation}{section}
\pgfplotsset{compat=1.9} 
\DeclareFontFamily{U}{mathx}{}
\DeclareFontShape{U}{mathx}{m}{n}{<-> mathx10}{}
\DeclareSymbolFont{mathx}{U}{mathx}{m}{n}
\DeclareMathAccent{\widehat}{0}{mathx}{"70}
\DeclareMathAccent{\widecheck}{0}{mathx}{"71}  
\newcommand\shorttitle[1]{\renewcommand\@shorttitle{#1}}
\newcommand{\upperRomannumeral}[1]{\uppercase\expandafter{\romannumeral#1}} 
\def \be {\begin{equation}}
\def \ee  {\end{equation}}
\def \bestar {\begin{equation*}}
\def \eestar  {\end{equation*}}
\def \lb {\left(}
\def \rb {\right)} 
\def \llb {\llbracket}
\def \rrb {\rrbracket}
\newcommand{\intint}[1]{\llbracket#1\rrbracket}
\def \EE {\mathbb{E}}
\def \RR {\mathbb{R}}
\def \ep {\varepsilon} 
\def \ZZ {\mathbb{Z}} 
\def \PP {\mathbb{P}} 
\def \d {\mathsf{d}}
\def \a {\mathsf{a}}
\def \b {\mathsf{b}}
\def \c {\mathsf{c}}
\def \d {\mathsf{d}}
\def \th {\theta}
\def \de {\delta}
\def \Iab {I^{(\a,\b)}}
\def \Iabth {I^{(\a,\b)}_{\th_1,\dots,\th_{d+1}}}
\def \bl{\boldsymbol{\lambda}}
\def \la {\lambda}
\def \one {\mathbf{1}} 
\def \D {\mathbf{D}}
\def \E {\mathbf{E}} 
\def \ll{\langle}
\def \rr{\rangle}
\DeclareMathOperator{\wt}{wt}
\DeclareMathOperator{\cone}{Cone}
\let\NAT@parse\undefined
\author{Milind Hegde}
\address
{
Milind Hegde, 
Columbia University, USA  
}
\email{mh4259@columbia.edu} 
\author{Zongrui Yang}
\address
{
Zongrui Yang, 
Columbia University, USA 
}
\email{zy2417@columbia.edu}
\title[Large deviation principle for the stationary measures of open ASEP]{Large deviation principle for the stationary measures of\\open asymmetric simple exclusion processes}
\begin{document} 

\begin{abstract}
We consider the stationary measure of the asymmetric simple exclusion process (ASEP) on a finite interval in $\ZZ$ with open boundaries. Fixing all the jump rates and letting the system size approach infinity, the height profile of such a sequence of stationary measures satisfies a large deviation principle (LDP), whose rate function was predicted in the physics work \cite{derrida2003exact}. In this paper, we provide the first rigorous proof of the large deviation principle in the ``fan region'' part of the phase diagram. Our proof relies on two key ingredients: a two-layer expression of the stationary measure of open ASEP, arising from the Enaud--Derrida representation \cite{enaud2004large} of the matrix product ansatz, and the large deviation principle of the open totally asymmetric simple exclusion process (TASEP) recently established in \cite{bryc2024two}. 
\end{abstract}

 \setcounter{tocdepth}{1}
\maketitle 

\tableofcontents

\section{Introduction and main results}
\subsection{Preface}\label{subsec:preface}
The open asymmetric simple exclusion process (ASEP) is a paradigmatic model for non-equilibrium systems with open boundaries and, asymptotically, for Kardar–Parisi–Zhang (KPZ) universality. Over the past five decades, extensive studies have been dedicated to understanding its stationary measure through the ``matrix product ansatz'' approach. This method has enabled the derivation of a wide range of asymptotic behaviors of the stationary measure of open ASEP, including phase diagrams, density profiles, correlation functions, limit fluctuations, and large deviations. We refer the reader to \cite{liggett1999stochastic, corwin2022some, blythe2007nonequilibrium, corteel2011tableaux} for a selection of surveys in statistical physics, probability, and combinatorics.

The large deviation principle (LDP) for the height profile of the stationary measure of open ASEP has been calculated in the physics literature \cite{derrida2002exact,derrida2003exact}, using the matrix product ansatz and the additivity principle. We refer the reader to \cite{derrida2006matrix,derrida2007non} for expositions on this topic; Section~\ref{s.related work} also contains further discussion of related work. Rigorously establishing the large deviation principle for open ASEP has been a long-standing open problem. Recently, the LDP for the open totally asymmetric simple exclusion process (TASEP) was proved in \cite{bryc2024two}, utilizing a representation developed therein in terms of two-layer Gibbs measures. The present work focuses on the LDP for open ASEP.

The phase diagram of the open ASEP consists of the fan region and the shock region. In this paper, we study the fan region, where we rigorously confirm the large deviation principle proposed in \cite{derrida2002exact, derrida2003exact}. Our approach relies on introducing a two-layer representation of the stationary measure of the open ASEP in the fan region, covering the full range of parameter choices. This representation originates in the Enaud–Derrida formulation of the matrix product ansatz \cite{enaud2004large} and has been previously discussed in \cite{bryc2024two,bryc2024stationary,derrida2004asymmetric,barraquand2023stationary,brak2006combinatorial,nestoridi2024approximating} for particular choices of parameters. We then develop key estimates for the two-layer measures, enabling a comparison of the two-layer weights for open ASEP with those for open TASEP. This is achieved using monotonicity properties of the open TASEP two-layer Gibbs measure along with resampling arguments, which represent a novel aspect of our approach compared to previous studies on LDPs for open ASEP/TASEP. Based on this comparison, we establish that open ASEP satisfies an LDP with a rate function that depends only on the effective densities at the two boundaries; in particular, it does not depend on the asymmetry parameter. Consequently, the rate function must coincide with the one in the case of open TASEP, which was identified in \cite{bryc2024two}.  Some more details are discussed in Section~\ref{sec:method of proof}.

Our approach is fairly robust. An upcoming work 
of the second-named author with Dominik Schmid will provide a different version of a two-layer representation of the stationary measure of open ASEP in the shock region under Liggett's condition, which is based on \cite{bryc2024stationary}. Utilizing this representation, the LDP in the shock region can also be demonstrated using a similar argument, as will be shown in a future version.

\subsection{Definition of the model}\label{subsec:model and results}
The open ASEP is a continuous-time irreducible Markov process on the state space $\{0,1\}^N$ with parameters
\be\label{eq:conditions open ASEP}
\alpha,\beta>0,\quad \gamma,\de\geq 0,\quad\text{and}\quad  0\leq q<1,
\ee  
which models the evolution of particles on the lattice $\left\{1,\dots,N\right\}$; here $q$ is the asymmetry parameter mentioned above. The state of the system is represented by a configuration $(\tau_1,\ldots,\tau_N)\in\{0,1\}^N$, where $\tau_i$ indicates whether the $i$\textsuperscript{th} site is occupied or empty. In the bulk of the system, each particle attempts moves at random: to the left with rate $q$, and to the right with rate $1$, independently of all other particles. Here, at rate $q$ or $1$ means at random times given by a Poisson clock with the corresponding rate. At the two boundaries are reservoirs with infinitely many particles. 
At the left boundary, particles attempt to enter the system from the reservoir at random with rate $\alpha$ and attempt to exit at random with rate $\gamma$. At the right boundary, particles attempt to enter at random with rate $\delta$ and attempt to exit at random with rate $\beta$. All these attempts fail if the target site is already occupied, and succeed if not (attempts to exit the system into the reservoirs always succeed). See Figure \ref{fig:openASEP} for an illustration. Since $q < 1$, particles move in an asymmetric way, with a higher rate to the right than to the left. In the special case $q = \gamma=\delta=0$, particles move only to the right and the model is known as the open totally asymmetric simple exclusion process (TASEP).
 
\begin{figure}[t]
\centering
 \begin{tikzpicture}[scale=0.8]
\draw[thick] (-1.2, 0) circle(1.2);
\draw (-1.2,0) node{reservoir};
\draw[thick] (0, 0) -- (12, 0);
\foreach \x in {1, ..., 12} {
	\draw[gray] (\x, 0.15) -- (\x, -0.15);
}
\draw[thick] (13.2,0) circle(1.2);
\draw(13.2,0) node{reservoir};
\fill[thick] (2, 0) circle(0.2);
\fill[thick] (5, 0) circle(0.2);
\fill[thick] (7, 0) circle(0.2);
\fill[thick] (10, 0) circle(0.2);
\draw[thick, ->] (2, 0.3)  to[bend left] node[midway, above]{$1$} (3, 0.3);
\draw[thick, ->] (5, 0.3)  to[bend right] node[midway, above]{$q$} (4, 0.3);
\draw[thick, ->] (7, 0.3) to[bend left] node[midway, above]{$1$} (8, 0.3);
\draw[thick, ->] (10.05, 0.3) to[bend left] node[midway, above]{$1$} (11, 0.3);
\draw[thick, ->] (9.95, 0.3) to[bend right] node[midway, above]{$q$} (9, 0.3);
\draw[thick, ->] (0, 0.5) to[bend left] node[midway, above]{$\alpha$} (0.9, 0.4);
\draw[thick, <-] (0, -0.5) to[bend right] node[midway, below]{$\gamma$} (0.9, -0.4);
\draw[thick, ->] (12, -0.4) to[bend left] node[midway, below]{$\delta$} (11, -0.5);
\draw[thick, <-] (12, 0.4) to[bend right] node[midway, above]{$\beta$} (11.1, 0.5);
\node[gray] at (1,-0.4) {\tiny $1$};\node[gray] at (2,-0.4) {\tiny $2$};\node[gray] at (3,-0.4) {\tiny $3$};\node[gray] at (4,-0.4) {\tiny $4$};\node[gray] at (5,-0.4) {\tiny\dots};\node[gray] at (11,-0.4) {\tiny $n$};
\end{tikzpicture} 
\caption{Jump rates in the open ASEP.}
\label{fig:openASEP}
\end{figure}
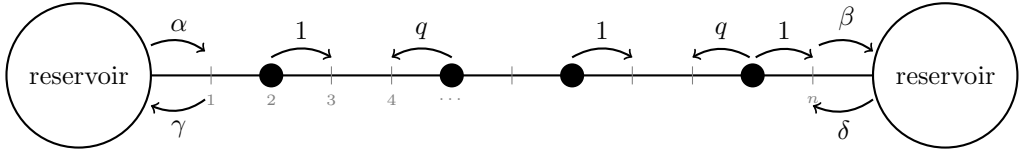

We will work with a re-parameterization of the open ASEP system by $\a,\b,\c,\d$ and $q$, where
\be\label{eq:defining ABCD}
\a=\phi_+(\alpha,\gamma),\quad\b=\phi_+(\beta,\de),\quad \c=\phi_-(\alpha,\gamma), \quad \d=\phi_-(\beta,\de),
\ee 
and
\be \label{eq:phi}
\phi_{\pm}(x,y)=\frac{1}{2x}\lb1-q-x+y\pm\sqrt{(1-q-x+y)^2+4xy}\rb, \quad \mbox{for }\; x>0\mbox{ and }y\geq 0.
\ee  
One can check that \eqref{eq:defining ABCD} gives a bijection between \eqref{eq:conditions open ASEP} and 
\be\label{eq:conditions qABCD}
\a,\b\geq0,\quad -1<\c,\d\leq 0,\quad 0\leq q<1.
\ee 

We will assume \eqref{eq:conditions open ASEP}, and consequently  \eqref{eq:conditions qABCD}, throughout the paper. 
The quantities $1/(1+\a)$ and $\b/(1+\b)$ defined by the parameters above have nice
physical interpretations as the ``effective densities'' near the left and right boundaries of the system.
In the special case of the open TASEP, i.e. $q = \gamma=\delta=0$, we have
\[
\a=(1-\alpha)/\alpha\geq0,\quad\b=(1-\beta)/\beta\geq0,\quad \c=\d=q=0.
\]

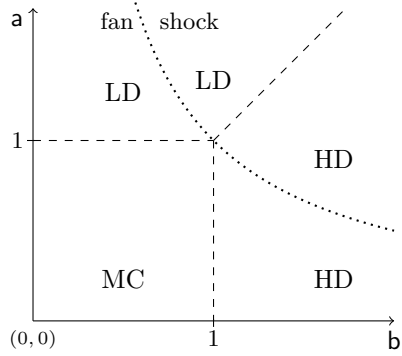
\begin{figure}[ht]
    \centering
    \begin{tikzpicture}[scale=0.8]
 \draw[scale = 1,domain=6.7:11,smooth,variable=\x,dotted,thick] plot ({\x},{1/((\x-7)*1/3+2/3)*3+5});
 \draw[->] (5,5) to (5,10.2);
 \draw[->] (5.,5) to (11,5);
   \draw[dashed] (5,8) to (8,8);
  \draw[dashed] (8,8) to (8,5);
   \draw[dashed] (8,8) to (10.2,10.2);
   \node [left] at (5,8) {\small$1$};
   \node[below] at (8,5) {\small $1$};
     \node [below] at (11,5) {$\b$};
   \node [left] at (5,10) {$\a$};
 \draw[dashed] (8,4.9) to (8,5.1);
  \draw[dashed] (4.9,8) to (5.1,8);
 \node [below] at (5,5) {\scriptsize$(0,0)$};
    \node [above] at (6.5,8.5) {LD};\node [above] at (8,8.7) {LD};
    \node [below] at (10,6) {HD};  \node [below] at (10,8) {HD}; 
 \node [below] at (6.5,6) {MC}; 
 \node at (6.4,10) {\small{fan}};
 \node at (7.6,10) {\small{shock}};
 \end{tikzpicture} 
    \caption{Phase diagram for the open ASEP stationary measures. LD, HD, MC respectively
stand for the low density, high density and maximal current phases.}
    \label{fig:phase}
\end{figure}

It is known since \cite{derrida1993exact} that as the system size $N\rightarrow\infty$, the asymptotic behavior of open ASEP is governed by the parameters $\a$ and $\b$, which exhibits a phase diagram (Figure \ref{fig:phase}) involving three phases:
\begin{enumerate}
    \item [$\bullet$] (maximal current phase) $\a<1$, $\b<1$,
    \item [$\bullet$] (high density phase) $\b>1$, $\b>\a$, and
    \item [$\bullet$] (low density phase) $\a>1$, $\a>\b$.
\end{enumerate} 

The phase diagram can also be divided into two regions which evince quite different behavior. They are defined by \cite{derrida2002exact,derrida2003exact}:
\begin{enumerate}
    \item [$\bullet$] (fan region) $\a\b<1$ and
    \item [$\bullet$] (shock region) $\a\b>1$.
\end{enumerate}

We denote by $\mu_N=\mu_N^{(q,\a,\b,\c,\d)}$ the unique stationary measure of open ASEP, which is a probability measure on $(\tau_1,\dots,\tau_N)\in\{0,1\}^N$, where, as mentioned earlier, $\tau_i$ is the occupation variable of site $i$, for $i=1,\dots,N$. 
The (re-scaled) height profile  of open ASEP is a continuous function $h_N:[0,1]\rightarrow[0,1]$ defined by 
\[h_N\lb\frac{k}{N}\rb:=\frac{1}{N}\sum_{i=1}^k\tau_i,\quad\text{for } k=0,\dots,N\]
and by using linear interpolation to extend its definition to $[0,1]$. One can observe that the height profile $h_N$ is a (non-strictly) increasing, 1-Lipschitz continuous function.

\subsection{The large deviation principle}
\begin{definition}\label{def:LDP}
Let $\mathcal{X}$ be a Polish space. 
A good rate function is a lower semi-continuous function $I:\mathcal{X}\to[0,\infty]$ with compact level sets, i.e., $I^{-1}[0,a]$ is a compact subset of $\mathcal{X}$ for every $a\geq 0$. 
Consider a sequence of probability spaces
$(\Omega_N,\PP_N)$ and a family of random variables $X_N:\Omega_N\to \mathcal{X}$, $N=1,2,\dots$.
The sequence $\{X_N\}_{N=1}^{\infty}$ satisfies the large deviation principle (LDP) with good rate function $I:\mathcal{X}\to[0,\infty]$ if 
\begin{enumerate}
\item [(1)] For any closed set $C\subset \mathcal{X}$,
    \begin{equation}\label{eq:def of LDP second ineq}
    \limsup_{N\to\infty}\frac{1}{N} \log \PP_N (X_N\in  C)\leq -\inf_{x\in C} I(x).
    \end{equation}
    \item [(2)] For any open set $U\subset \mathcal{X}$,
    \begin{equation}\label{eq:def of LDP first ineq}
    \liminf_{N\to\infty}\frac{1}{N} \log \PP_N (X_N\in  U)\geq -\inf_{x\in U} I(x).
    \end{equation}
\end{enumerate} 
\end{definition}

We next introduce our main theorem, which is proved in Section~\ref{sec:proof of LDP}. We will denote by $C_0\lb[0,1],\RR\rb$ the space of continuous functions $f:[0,1]\rightarrow\RR$ with $f(0)=0$, equipped with the supremum norm. We will also denote by $\mathcal{AC}_0\subset C_0\lb[0,1],\RR\rb$ the subset of absolutely continuous functions.

\begin{theorem}\label{thm:LDP main thm}
Consider the open ASEP in the fan region $\a\b<1$. 
    Under the stationary measures $\mu_N^{(q,\a,\b,\c,\d)}$ of open ASEP, the sequence of height profiles $\{h_N\}_{N=1}^{\infty}\subset C_0\lb[0,1],\RR\rb$ satisfies the large deviation principle with the good rate function $\Iab:C_0\lb[0,1],\RR\rb\rightarrow[0,\infty]$ defined as follows.

    When $f\in \mathcal{A}C_0\lb[0,1],\RR\rb$ satisfies $0\leq f'\leq 1$ almost everywhere,
    \be\label{eq:rate fan}\Iab(f):=\int_0^1 H \lb f'(x)\rb\d x + \int_0^1 \lb\widetilde{f'}(x) \log  G_*(x)+\lb1-\widetilde{f'}\rb\log(1- G_*(x))\rb\d x-\log (J(\a,\b)),\ee
where $H(x):=x \log x +(1-x)\log(1-x)$ if $x\in[0,1]$ and $H(x)=\infty$ otherwise;  
\be\label{eq:J}J(\a,\b):=\begin{cases}
    \a/(1+\a)^2 &\text{ if  }\a>1,\\
    \b/(1+\b)^2 &\text{ if  }\b>1,\\
    1/4 &\text{ if  }\a,\b\leq 1;
\end{cases}\ee
$G_*(x):=\min\lb\max\lb\widetilde{f'}(x),\a/(1+\a)\rb,1/(1+\b)\rb$; and $\widetilde{f}$ is the convex envelope of $f$, i.e., the largest convex function below $f$. For any other $f\in C_0\lb[0,1],\RR\rb$ we define $\Iab(f)=\infty$.
\end{theorem}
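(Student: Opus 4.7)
The plan is to deduce the ASEP large deviation principle from the open TASEP LDP of \cite{bryc2024two} via a careful comparison between the stationary measures of the two models at the level of a common two-layer representation. First I would set up, for the full fan region $\a\b<1$, a two-layer description of $\mu_N^{(q,\a,\b,\c,\d)}$: writing each configuration $(\tau_1,\dots,\tau_N)$ as the marginal of an explicit probability measure on pairs of lattice paths on a strip, with weights arising from the Enaud--Derrida formulation \cite{enaud2004large} of the matrix product ansatz. The particular cases appearing in \cite{bryc2024two,bryc2024stationary,derrida2004asymmetric,barraquand2023stationary,brak2006combinatorial,nestoridi2024approximating} suggest the structure one should expect, and extending these to the full parameter range in \eqref{eq:conditions qABCD} is the first step of the proof.

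Next I would establish quantitative estimates for the two-layer weights of open ASEP, showing that they can be sandwiched, up to multiplicative factors that are subexponential in $N$, by the two-layer weights for open TASEP with the same effective boundary parameters $\a$ and $\b$. The intuition is that the parameters $q\in[0,1)$ and $\c,\d\in(-1,0]$ affect the two-layer weights only through bounded, geometrically small contributions (per edge or per vertex), so on the scale $\exp(-N\cdot \text{const})$ relevant to an LDP they wash out. This step is essentially algebraic, but must be done uniformly in the configurations in order to be useful.

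The bridge between weight comparison and probability comparison is the part I expect to be the main obstacle. A direct pointwise weight bound is not enough, because partition functions on the two sides differ exponentially. Instead, following the strategy outlined in Section~\ref{sec:method of proof}, I would use stochastic monotonicity of the open TASEP two-layer Gibbs measure together with resampling arguments: sampling portions of the two-layer ASEP configuration and resampling them under a TASEP two-layer kernel, then propagating stochastic bounds to the marginal law of $h_N$. This reduces the comparison of height profiles under $\mu_N^{(q,\a,\b,\c,\d)}$ and under $\mu_N^{(0,\a,\b,0,0)}$ to quantitative bounds on Radon--Nikodym derivatives that only see the effective densities at the boundaries.

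Finally, once the marginals of $h_N$ under ASEP and TASEP are shown to agree on the LDP scale, Theorem~\ref{thm:LDP main thm} follows by transporting both the upper bound \eqref{eq:def of LDP second ineq} and the lower bound \eqref{eq:def of LDP first ineq} from the TASEP LDP proved in \cite{bryc2024two}. Since the comparison annihilates all dependence on $q,\c,\d$, the limiting rate function depends only on $\a$ and $\b$, and is therefore forced to equal the TASEP rate function, which is exactly $\Iab(f)$ in \eqref{eq:rate fan}. The good rate function property is inherited from \cite{bryc2024two}.
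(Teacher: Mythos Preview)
Your high-level architecture matches the paper's: two-layer representation via Enaud--Derrida, comparison to the TASEP two-layer measure, and transport of the LDP from \cite{bryc2024two}. However, you have misidentified the main obstacle. You write that ``a direct pointwise weight bound is not enough, because partition functions on the two sides differ exponentially.'' In fact the partition functions have \emph{the same} exponential rate $-\log J(\a,\b)$, by an asymptotic of Uchiyama--Sasamoto--Wadati (the paper's Lemma~\ref{lem:log asy of normalization constant}); and the upper bound $\wt_N^{(q,\a,\b,\c,\d)}(\bl)\le \wt_N^{(0,\a,\b,0,0)}(\bl)/(q;q)_\infty$ is a one-line algebraic pointwise estimate. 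The actual difficulty is the \emph{lower} bound: the Radon--Nikodym derivative contains factors like $1-q^{\lambda_1(j)-\lambda_2(j)}$ (and $1+\c q^{\cdots}$, $1+\d q^{\cdots}$), which are small precisely when the two layers are close, and this can happen on a positive fraction of $\llb 0,N\rrb$, yielding an exponentially small Boltzmann factor. So calling the weight sandwiching ``essentially algebraic'' is wrong in the direction that matters.

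What the paper does instead is prove a probabilistic \emph{separation estimate} (Proposition~\ref{prop:key estimate}): under the TASEP two-layer measure, conditioned on the first layer hitting prescribed intervals at finitely many times $\theta_1,\dots,\theta_{d+1}$, the event that $\lambda_1-\lambda_2>r$ except on an $\varepsilon$-fraction of sites has probability at least $e^{-N^{4/5}}$. This is proved by FKG and monotone-coupling arguments on the TASEP two-layer Gibbs measure (not by resampling ASEP configurations under a TASEP kernel), and it is what converts the algebraic upper bound into a matching lower bound on numerators of the form $\sum_{\bl\in\mathrm{Val}}\wt^{(q,\a,\b,\c,\d)}(\bl)$. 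The conditioning on finitely many values is essential: it makes the events rectangular and compatible with the monotonicity tools, and then the passage from finite-dimensional LDP to the path-level LDP is done via Dawson--G\"artner plus exponential tightness (Arzel\`a--Ascoli). Your sketch omits both of these structural points; without the separation estimate and the finite-dimensional intermediate, the comparison you describe does not close.
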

\begin{remark}
    The above large deviation principle for the stationary measure of open ASEP was first postulated in the physics works \cite{derrida2002exact,derrida2003exact} by B. Derrida, J. L. Lebowitz and E. R. Speer. Recently it was rigorously shown in \cite{bryc2024two} by W. Bryc and P. Zatitskii in the case of open TASEP. The contribution of this paper is to rigorously demonstrate this result in the fan region $\a\b<1$ for general open ASEP.
\end{remark}
\begin{remark}\label{rmk:alternative formula rate function}
    An alternative formula for the rate function $\Iab:C_0\lb[0,1],\RR\rb\rightarrow[0,\infty]$ was provided in the recent work \cite[Theorem 3.4]{bryc2024two}.
    We will utilize this formula so we record it as follows.
    We have
    \[
    \Iab(f)=\inf_{ \substack{ g\in\mathcal{AC}_0, \\ 0\leq g'\leq 1 \mbox{ a.s.} }}\Iab(f,g)
    \]
    if $f\in\mathcal{AC}_0$ and $0\leq f'\leq1$ almost surely, and $I(f)=\infty$ otherwise, where 
    \[ 
        \Iab(f,g)= \int_0^1 \bigl(H(f'(x)) +H(g'(x))\bigr) \d x
    + \log(\a\b) \min_{0\leq x\leq 1}(f(x)-g(x)) -\log (\b)(f(1)-g(1)) - \log (J(\a,\b)).
    \]
\end{remark}
\begin{remark}
    Along the boundary curve $\a\b=1$ of the fan region, the stationary measure $\mu_N$ of open ASEP becomes a product of i.i.d. Bernoulli random variables $\tau_1,\dots,\tau_N$ with mean $1/(1+\a)=\b/(1+\b)$; see, for example, \cite[Appendix A]{enaud2004large}. The large deviation principle therein can be obtained using standard results.
\end{remark}

\subsection{Related works}\label{s.related work}  
One of the most important methods for studying the stationary measures of simple exclusion processes on a finite lattice is the matrix product ansatz (MPA). This method was introduced by Liggett \cite{liggett1975ergodic} in an implicit form and by Derrida, Evans, Hakim, and Pasquier \cite{derrida1993exact} for the open TASEP; see \cite{blythe2000exact} for its extension to open ASEP. The matrix product ansatz has inspired numerous studies on various types of asymptotics of open ASEP. For a non-exhaustive list of works in physics, see \cite{derrida1993exact,schutz1993phase,essler1996representations,mallick1997finite,derrida2002exact,derrida2003exact,sasamoto2000density,uchiyama2004asymmetric,uchiyama2005correlation} and the references in \cite{blythe2007nonequilibrium,corwin2022some}.
It is known from \cite{uchiyama2004asymmetric} (see also \cite{corteel2011tableaux}) that the matrix product ansatz has a representation closely related to the Askey–Wilson orthogonal polynomials \cite{askey1985some}. This connection has led to a powerful method for studying the stationary measure of open ASEP \cite{uchiyama2004asymmetric,bryc2010askey,bryc2017asymmetric,bryc2019matrix,wang2023askey}.
 
The stationary measure of open ASEP is closely connected to many combinatorial structures, including staircase tableaux, Motzkin paths, and two-layer measures. These structures arise from various representations of the matrix product ansatz. See the survey papers \cite{corteel2011matrix,wood2020combinatorial} and references therein. The two-layer Gibbs measure was recently introduced by \cite{barraquand2024stationary} to characterize the stationary measures of certain integrable models with two open boundaries. This representation was later developed for the open TASEP in \cite{bryc2024two} and the open ASEP under Liggett's condition in \cite{bryc2024stationary}. See also \cite{bryc2024limitCoex,barraquand2024integral} for related works. This two-layer reweighted random walk representation had appeared earlier for the open ASEP in the fan region \cite{barraquand2023stationary} under Liggett's condition; see \cite{derrida2004asymmetric} for the open TASEP case. Another bi-colored Motzkin path (or equivalently, two-layer measures) representation for the $\gamma=\delta=0$ case is provided in \cite{brak2006combinatorial}; see \cite{nestoridi2024approximating} for discussions and applications. A different representation of the stationary measure of TASEP was studied in \cite[Section 3.2]{duchi2005combinatorial}, which has a Markov evolution structure of  two layers.

The large deviation principle (LDP) is an important aspect of the stationary measure of the open ASEP and has been the focus of extensive study. In the physics literature, the LDP for the open symmetric simple exclusion process (SSEP) was calculated in \cite{derrida2001free,derrida2002large} using the matrix product ansatz by directly computing the probability of a macroscopic profile through the summation of configuration probabilities. For the open ASEP, the LDP was calculated in \cite{derrida2002exact,derrida2003exact} for the case $\gamma=\delta=0$ and $0\leq\alpha,\beta\leq1-q$. This calculation is more complex and was carried out using the ``additivity property" which was obtained through the matrix product ansatz. For the weakly asymmetric limit (i.e. $q\rightarrow1$ as $q=1-\lambda/N$), the LDP was calculated in \cite{enaud2004large}. We refer the reader to \cite{derrida2006matrix,derrida2007non} for introductory surveys on these topics. For investigations on the LDP for the current of the stationary measure of open ASEP, see physics works \cite{bodineau2005current,de2011large,lazarescu2011exact,lazarescu2013exact,gorissen2012exact}, survey papers \cite{lazarescu2015physicist,mallick2015exclusion} and more references therein.

In the mathematical literature, the LDP of the total number of particles for open ASEP was obtained in \cite{bryc2017asymmetric} using Askey--Wilson methods. 
This observable is a marginal of the full height profile.
It is not clear whether the Askey--Wilson method is suitable for deriving the LDP of the height profile, due to a technical constraint in the Laplace transform coefficients. On the other hand, the two-layer representation of open ASEP is suitable for studying the LDP of the height profile. The LDP for the open TASEP stationary measure was obtained in \cite{bryc2024two} via the two-layer representation, using directly Mogulskii’s theorem in LDP theory. An alternative formula for the rate function was also obtained in \cite{bryc2024two}; see Remark \ref{rmk:alternative formula rate function}.

In the present work, we derive the LDP of the open ASEP in the fan region using a different version of the two-layer representation compared to \cite{bryc2024two}, where the two layers are both fixed to start at zero and are not ordered. In particular, in our representation the starting point of the second layer is not fixed and the two layers maintain an ordering, which in some ways leads to a simpler description (for instance, the measure has a Gibbs or spatial Markov property, though we  make use of this only for $\c=\d=q=0$). It is not clear to us whether using the two-layer representation for open ASEP under the Liggett's condition in \cite[Theorem 1.1]{bryc2024stationary} (in which the starting points are fixed at zero), one can obtain the LDP in the fan region in a similar way as \cite{bryc2024two}; one issue is that it is not clear if the Radon-Nikodym derivative of the measure with respect to independent random walks can be expressed as  a continuous functional on the path space.

\subsection{Method of proof}\label{sec:method of proof}

As mentioned, our approach relies on a comparison to open TASEP. Fix $\a, \b\geq 0$ and consider the open ASEP system with parameters $\a,\b,\c,\d$ and $q$, and the open TASEP system with parameters $\a$ and $\b$ (i.e., $\c=\d=q=0$). With the two-layer representation introduced in Section~\ref{sec:proof of main thm}, one has an explicit formula for the Radon-Nikodym derivative of the open ASEP two-layer measure with respect to the open TASEP two-layer measure. Suppressing $\a,\b,\c,\d$ in the notation, let $\PP^{(q)}$ and $\EE^{(q)}$ be the probability measure and expectation associated to the open ASEP system mentioned above and $\PP^{(0)}$ and $\EE^{(0)}$ be the same for the open TASEP system (so $\c=\d=0$ for the latter, but not necessarily for the former). Then we may write the just mentioned Radon-Nikodym derivative as $B^{(q)}/\EE^{(0)}[B^{(q)}]$ for a random variable $B^{(q)} = B^{(q,\c,\d)}\in[0,1]$ (often called a Boltzmann factor). Our argument relies on the observation that, if for a given event $A$ we can find an event $E$ such that $\PP^{(0)}(E\mid A) \geq 1-\varepsilon$ and $B^{(q)}\geq 1-\varepsilon$ on $E$, then
\begin{align*}
\PP^{(q)}(A) = \frac{\EE^{(0)}[B^{(q)}\one_A]}{\EE^{(0)}[B^{(q)}]} \geq \PP^{(0)}(A)\cdot\EE^{(0)}[B^{(q)}\mid A] \geq \PP^{(0)}(A)\cdot \EE^{(0)}[B^{(q)}\one_E\mid A] \geq (1-\varepsilon)^2\cdot\PP^{(0)}(A).
\end{align*}
A similar upper bound of $\PP^{(q)}(A)\leq (1-\varepsilon)^{-2}\cdot\PP^{(0)}(A)$ is more simply obtained since one also has $\EE^{(0)}[B^{(q)}]\geq (1-\varepsilon)^2$ given the existence of an event $E$ as above for the whole space.

For our setting, the event $E$ is the event that the two curves remain sufficiently separated for most of their lifetimes. That the Boltzmann factor is essentially 1 when the curves are sufficiently separated has been observed before in the context of ASEP systems, for instance, in the work \cite{aggarwal2024scaling}. Since we are concerned only with probabilities on the large deviation scale, we also do not need a probability lower bound of $1-\varepsilon$ on $\PP^{(0)}(E\mid A)$, but merely something which decays much slower than exponentially in $N$.  Thus the main task in the proof is to establish that $\PP^{(0)}(E\mid A)$ is not too small for events $A$ that arise in the LDP principle (Proposition~\ref{prop:key estimate}). Notice that this is a statement purely about the base measure $\PP^{(0)}$, which is the law of two random walks conditioned to not intersect. 

To prove such an estimate, we make use of Gibbs resampling arguments and monotone coupling properties of the non-intersecting random walk measure. Roughly speaking, we use that the walks ``repel'' each other (Lemma~\ref{lem:two paths monotone}), along with standard fluctuation bounds for unconditioned random walks, to obtain that, with not-too-small probability, the two random walks maintain a separation of at least $N^{3/5}$ for all but $\varepsilon$ proportion of their lifetime. Here, the $3/5$ exponent has no significance, and one could also obtain smaller separation guarantees with higher, e.g., constant order, probability. This argument yields an LDP for the height function evaluated at finitely many points (such choices of events for $A$ are easier to apply the monotonicity arguments for), which can then be upgraded to an LDP on the path space by standard LDP techniques.

\subsection*{Outline of the rest of the paper}
In Section \ref{sec:proof of main thm} we will introduce the two-layer representation of the stationary measure of open ASEP and state a key estimate (Proposition \ref{prop:key estimate}) on random walk measures. In Section \ref{sec:proof of estimate of RW} we will provide the proof of Proposition \ref{prop:key estimate}, and Section~\ref{sec:proof of LDP} contains the proof of the main result, Theorem~\ref{thm:LDP main thm}.

\subsection*{Acknowledgements} We thank Ivan Corwin for many helpful comments, which helped improve the presentation of the paper. We also thank Wlodek Bryc and Dominik Schmid for many valuable discussions. M.H.~was partially supported by NSF grants DMS-1937254 and DMS-2348156. Z.Y. was partially supported by Ivan Corwin’s NSF grant DMS-1811143 as well as the Fernholz Foundation’s ``Summer Minerva Fellows'' program.

\section{The two-layer representation in the fan region}\label{sec:proof of main thm}
In this section we introduce the two-layer representation for the open ASEP stationary measure in the fan region. In Section~\ref{sec:matrix product} we collect some facts about the matrix product ansatz, and in Section~\ref{sec:two layer} we use it to give the two-layer representation in Theorem~\ref{thm:two layer representation}.  In Section~\ref{sec:introduce important estimate}, we state our key estimate of the random walk measures, Proposition \ref{prop:key estimate}. The proof of the latter will be the focus of Section~\ref{sec:proof of estimate of RW}.

\subsection{The matrix product ansatz and Enaud--Derrida representation}\label{sec:matrix product}
In this subsection, we will first introduce the matrix product ansatz for the open ASEP from \cite{derrida1993exact}. We will then introduce the Enaud--Derrida representation of the matrix ansatz, constructed in \cite{enaud2004large}. 
\begin{lemma}[c.f. \cite{derrida1993exact}]\label{lem:MPA} Assume \eqref{eq:conditions open ASEP} holds. Also, assume that there exist matrices $\D$, $\E$, a row vector $\ll W|$, and a column vector $|V\rr$, all of the same (possibly infinite) dimension, and that all admissible finite products of these matrices and vectors are well-defined (i.e., convergent if they are infinite-dimensional) and satisfy the property of associativity. Further assume that
\be\label{eq:DEHP algebra} 
        \D\E-q\E\D=\D+\E, \quad
        \ll W|(\alpha\E-\gamma\D)=\ll W|, \quad
        (\beta\D-\delta\E)|V\rr=|V\rr 
 \ee
    (which is commonly referred to as the DEHP algebra). 
    Then we have 
    \be \label{eq:MPA open ASEP}
\mu_N\lb\tau_1,\dots,\tau_N\rb=\frac{\displaystyle\Bigg{\ll} W\Bigg{|}\prod_{i=1}^N\lb\one_{\tau_i=0}\E+\one_{\tau_i=1}\D\rb\Bigg{|}V\Bigg{\rr}}{\displaystyle \ll W |(\E+\D)^N|V \rr},
\ee 
for any $(\tau_1,\dots,\tau_N)\in\{0,1\}^N$, assuming that the denominator $\ll W|(\E+\D)^N|V\rr$ is nonzero.
\end{lemma}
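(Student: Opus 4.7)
The plan is to exploit uniqueness: since the open ASEP on $\{0,1\}^N$ is an irreducible continuous-time Markov chain on a finite state space, its stationary measure is unique. Hence it is enough to check that the expression on the right-hand side of \eqref{eq:MPA open ASEP} defines a stationary probability. Writing $\pi(\tau):=\ll W|A_{\tau_1}\cdots A_{\tau_N}|V\rr$ with $A_0:=\E$, $A_1:=\D$, and $Z_N:=\ll W|(\D+\E)^N|V\rr$, it suffices to show that $\pi$ is a null-vector of the adjoint generator $\mathcal{L}^\ast$ of the open ASEP; uniqueness and irreducibility then force $\pi/Z_N$ to be the (strictly positive) stationary measure, given that $Z_N\neq 0$ by hypothesis.

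I would decompose $(\mathcal{L}^\ast\pi)(\tau)=\sum_{i=1}^{N-1}\mathcal{B}_i(\tau)+\mathcal{L}_0(\tau)+\mathcal{R}_0(\tau)$, where $\mathcal{B}_i$ is the net flux across the bulk bond $(i,i+1)$ and $\mathcal{L}_0,\mathcal{R}_0$ are the fluxes exchanged with the left and right reservoirs. Reading off from the jump rates, a direct computation gives
\[
\mathcal{B}_i(\tau)=(\tau_{i+1}-\tau_i)\ll W|A_{\tau_1}\cdots A_{\tau_{i-1}}(\D\E-q\E\D)A_{\tau_{i+2}}\cdots A_{\tau_N}|V\rr,
\]
while $\mathcal{L}_0$ and $\mathcal{R}_0$ are the analogous matrix elements featuring $\alpha\E-\gamma\D$ inserted at site $1$ and $\beta\D-\delta\E$ at site $N$, each with its own sign depending on $\tau_1$ and $\tau_N$. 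Applying the three identities of \eqref{eq:DEHP algebra} converts each contribution into a length-$(N-1)$ matrix element. Introducing $g_i(\tau):=\ll W|A_{\tau_1}\cdots A_{\tau_{i-1}}A_{\tau_{i+1}}\cdots A_{\tau_N}|V\rr$ (the matrix product with position $i$ omitted), one obtains $\mathcal{B}_i=(\tau_{i+1}-\tau_i)(g_i+g_{i+1})$ after splitting $\D+\E$ across the two adjacent sites, together with $\mathcal{L}_0=(2\tau_1-1)g_1$ and $\mathcal{R}_0=(1-2\tau_N)g_N$.

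Collecting coefficients of $g_1,\dots,g_N$ gives $(\mathcal{L}^\ast\pi)(\tau)=\sum_{i=1}^N c_i(\tau)\,g_i(\tau)$ with $c_1=\tau_1+\tau_2-1$, $c_N=1-\tau_{N-1}-\tau_N$, and $c_i=\tau_{i+1}-\tau_{i-1}$ for $2\leq i\leq N-1$. The partial sums $\sigma_k:=c_1+\cdots+c_k$ telescope to $\sigma_k=\tau_k+\tau_{k+1}-1$ for $k<N$, and $\sigma_N=0$. Abel summation then rewrites the whole expression as $\sum_{i=1}^{N-1}(\tau_i+\tau_{i+1}-1)(g_i-g_{i+1})$. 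Each summand vanishes: if $\tau_i=\tau_{i+1}$ then the two omitted matrices are equal, forcing $g_i=g_{i+1}$; if instead $\tau_i\neq\tau_{i+1}$, then $\tau_i+\tau_{i+1}-1=0$. Hence $\mathcal{L}^\ast\pi\equiv 0$, as desired.

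The main obstacle is the careful sign and index bookkeeping in identifying $(\mathcal{L}^\ast\pi)(\tau)$ with $\sum_i c_i g_i$ and in carrying out the Abel summation, but the DEHP algebra in \eqref{eq:DEHP algebra} is designed precisely so that this argument closes and no ingredient beyond the three stated identities is needed. Once $\pi$ is established as a null-vector of $\mathcal{L}^\ast$, dividing by the nonzero $Z_N$ yields the formula in \eqref{eq:MPA open ASEP}.
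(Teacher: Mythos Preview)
Your argument is correct and is essentially the classical DEHP verification that the matrix product weights annihilate the adjoint generator; the identification of the bulk and boundary fluxes, the rewriting $\mathcal{B}_i=(\tau_{i+1}-\tau_i)(g_i+g_{i+1})$ (valid because the prefactor vanishes when $\tau_i=\tau_{i+1}$, and otherwise $A_{\tau_i}+A_{\tau_{i+1}}=\D+\E$), and the Abel summation all check out. The paper does not give its own proof of this lemma but simply cites \cite{derrida1993exact}, so there is nothing further to compare; your write-up is a faithful reconstruction of the standard proof.
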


\begin{remark} \label{rmk:denomenator 0} 
As noted by \cite{mallick1997finite,essler1996representations}, assuming $\ll W|V\rr\neq0$ and finite,  the denominator $\ll W|(\E+\D)^N|V\rr$ of the matrix product ansatz \eqref{eq:MPA open ASEP} being nonzero can be guaranteed by $\a\b\c\d\notin\{q^{-\ell}: \ell\in\ZZ_{\geq0}\}$, where we recall that $\a$, $\b$, $\c$ and $\d$ are given by \eqref{eq:defining ABCD}. Observe that this is always the case in the fan region $\a\b<1$ since $\c,\d\in(-1,0]$.  

As shown in \cite[Appendix A]{mallick1997finite}, in the case above, for any $n_1,m_1,\dots,n_k,m_k\in\ZZ_+$, the ratio
\[
\ll W|\D^{n_1}\E^{m_1}\dots\D^{n_k}\E^{m_k}|V\rr / \ll W|V\rr
\]  
lies in $\RR_+$ and only depends on the parameters $q,\alpha,\beta,\gamma,\delta$ and the choice of $n_i,m_i$. In particular, this ratio does not depend on the specific examples of matrices $\D$, $\E$ and vectors $\ll W|$, $|V\rr$ that satisfy the DEHP algebra \eqref{eq:DEHP algebra}.  \end{remark} 

We will use the following example of $\D$, $\E$, $\ll W|$ and $|V\rr$ constructed by C.\ Enaud and B.\ Derrida  in \cite{enaud2004large}, which is known in the literature as the Enaud--Derrida representation.

\begin{lemma}[c.f. \cite{enaud2004large}]\label{lem:Enaud Derrida representation}
    Assume \eqref{eq:conditions qABCD} and $\a\b<1$. Then the infinite matrices 
    \[
    \D=\frac{1}{1-q}\begin{bmatrix}
  1+\d & 1-q & 0 & 0 &\dots\\
  0 & 1+q \d & 1-q^2&0 &\dots\\
  0& 0 & 1+q^2 \d &  1-q^3   \\
   \vdots & & \ddots&\ddots \\
\end{bmatrix},\quad
   \E=\frac{1}{1-q}\begin{bmatrix}
  1+\c & 0 & 0 & \dots& \\
  1-\c \d & 1+q \c & 0&\dots \\
  0& 1-q \c \d & 1+q^2 \c &  &\\
   \vdots & & \ddots & \ddots\\
\end{bmatrix}, 
\]
and infinite vectors:
\[
\ll W | =[1,\a,\a^2,\a^3,\dots], \quad\quad\quad
|V\rr= \left[1,\b\frac{(\c\d;q)_{1}}{(q;q)_{1}}, \b^2 \frac{(\c\d;q)_{2}}{(q;q)_{2}}, \b^3 \frac{(\c\d;q)_{3}}{(q;q)_{3}},\dots\right]^T
\]
satisfy the DEHP algebra \eqref{eq:DEHP algebra}.
Here and below, for $z\in\mathbb{C}$ and $n\in\mathbb{Z}_{\geq0}\cup\{\infty\}$, we use the $q$-Pochhammer symbol:
$(z;q)_n:=\prod_{j=0}^{n-1}\,(1-z q^j)$. 
Moreover, we have
$\ll W|V\rr=(\a\b\c\d;q)_{\infty}/(\a\b;q)_{\infty}$
\end{lemma}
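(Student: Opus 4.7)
The plan is to prove this by direct computation, using the tridiagonal structure of the matrices and the $q$-binomial theorem for the pairing $\ll W|V\rr$. First I would record the parameter dictionary between $(\alpha,\beta,\gamma,\delta)$ and $(\a,\b,\c,\d)$ implied by \eqref{eq:defining ABCD} and \eqref{eq:phi}. Since $\phi_+$ and $\phi_-$ are the two roots of the quadratic $x z^2 - (1-q-x+y)z - y = 0$, Vieta's formulas give $\a+\c = (1-q-\alpha+\gamma)/\alpha$ and $\a\c = -\gamma/\alpha$, from which one readily obtains the clean identities
\[
\alpha (1+\a)(1+\c) = 1-q,\qquad \gamma = -\alpha\,\a\c,
\]
and analogously $\beta(1+\b)(1+\d)=1-q$, $\delta = -\beta\,\b\d$. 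These relations will make the boundary identities collapse cleanly.

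Next, to verify $\D\E - q\E\D = \D+\E$, I would observe that $\D$ is upper bidiagonal with entries $\D_{i,i}=(1+q^i\d)/(1-q)$ and $\D_{i,i+1}=(1-q^{i+1})/(1-q)$, while $\E$ is lower bidiagonal with $\E_{i,i}=(1+q^i\c)/(1-q)$ and $\E_{i+1,i}=(1-q^i\c\d)/(1-q)$. Consequently both $\D\E$ and $\E\D$ are tridiagonal, and one checks the identity on each of the three diagonals. The super-diagonal entry is $(\D\E - q\E\D)_{i,i+1} = \D_{i,i+1}[\E_{i+1,i+1}-q\E_{i,i}]$, which simplifies (the $q^{i+1}\c$ terms cancel) to exactly $(1-q^{i+1})/(1-q) = \D_{i,i+1} + \E_{i,i+1}$; the sub-diagonal case is symmetric, with $\d$ playing the role of $\c$. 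The diagonal case requires expanding four products and showing the terms of order $q^{2i}\c\d$ cancel out, leaving $(1-q)[2+q^i(\c+\d)]/(1-q)^2$, which matches $\D_{i,i}+\E_{i,i}$.

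For the boundary relation $\ll W|(\alpha\E-\gamma\D)=\ll W|$, I would compute that, for the geometric vector $(\ll W|)_i=\a^i$,
\[
(\ll W|\E)_j = \frac{\a^j}{1-q}\bigl[(1+\a) + q^j\c(1-\a\d)\bigr],\qquad
(\ll W|\D)_j = \frac{\a^{j-1}}{1-q}\bigl[(1+\a) - q^j(1-\a\d)\bigr].
\]
Substituting $\alpha$ and $\gamma$ via the dictionary above gives
\[
(\ll W|(\alpha\E-\gamma\D))_j = \frac{\a^j}{(1+\a)(1+\c)}\Bigl[(1+\a)+q^j\c(1-\a\d) + \c\bigl((1+\a)-q^j(1-\a\d)\bigr)\Bigr],
\]
and the two $q^j$ terms cancel, leaving $\a^j$ as required. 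The relation $(\beta\D-\delta\E)|V\rr = |V\rr$ is verified symmetrically using the key telescoping identities $(q;q)_{i+1}=(q;q)_i(1-q^{i+1})$ and $(\c\d;q)_{i+1}=(\c\d;q)_i(1-q^i\c\d)$; after the same type of cancellation the $q^i$ contributions drop out and one recovers $|V\rr_i$.

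Finally, the inner product is the geometric-type sum
\[
\ll W|V\rr = \sum_{i=0}^{\infty}(\a\b)^i\,\frac{(\c\d;q)_i}{(q;q)_i},
\]
which converges in the fan region $\a\b<1$. By the $q$-binomial theorem (Heine's sum) $\sum_{n\geq 0}\frac{(a;q)_n}{(q;q)_n}z^n=\frac{(az;q)_\infty}{(z;q)_\infty}$, applied with $a=\c\d$ and $z=\a\b$, this equals $(\a\b\c\d;q)_\infty/(\a\b;q)_\infty$. There is no real obstacle here; the only delicate points are keeping track of the boundary index $i=0$ when sub-diagonal terms vanish, and ensuring the series for $\ll W|V\rr$ converges absolutely, both of which are handled by $\a\b<1$ and $\c,\d\in(-1,0]$ from \eqref{eq:conditions qABCD}.
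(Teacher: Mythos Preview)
Your proposal is correct. The paper's own proof simply cites \cite[Section 3.2]{enaud2004large} for the DEHP relations and invokes the $q$-binomial theorem for $\ll W|V\rr$, so you are taking the same route for the second statement but supplying a direct, self-contained verification for the first. Your parameter dictionary $\alpha(1+\a)(1+\c)=1-q$, $\gamma=-\alpha\a\c$ (and its $\beta,\delta$ analogue) is exactly what makes the boundary identities collapse, and your tridiagonal check of $\D\E-q\E\D=\D+\E$ is accurate, including the diagonal case where the $q^{2i}\c\d$ terms cancel. The advantage of your approach is that it makes the lemma independent of the reference and transparently displays why the particular $q$-Pochhammer weights in $|V\rr$ are forced; the paper's approach is shorter but relies on matching notation with \cite{enaud2004large}.
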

\begin{proof}
    The first statement appears in \cite[Section 3.2]{enaud2004large}, where our notations $(\a,\b,\c,\d)$ correspond to
    \[
    \lb \frac{1-\rho_a}{\rho_a}, \frac{\rho_b}{1-\rho_b}, -e,-d \rb
    \]
    therein. The second statement follows from the $q$-binomial theorem, see \cite[Equation (1.3.2)]{gasper2011basic}.
\end{proof}

\subsection{The two-layer representation}\label{sec:two layer}
In the fan region $\a\b<1$, the stationary measures of open ASEP can be expressed in terms of certain two-layer measures, which originate from the Enaud--Derrida representation of the matrix product ansatz. In this subsection we will introduce this two-layer representation.

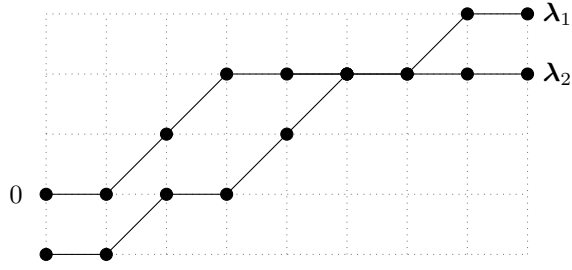
\begin{figure}[ht]
\begin{center}
\begin{tikzpicture}[scale=0.8]
\begin{scope}
    \def\r{0.1}
	\draw[dotted, gray] (0,0) grid (8,4);
        \draw[fill = black] (0,1) circle [radius=\r];
        \draw[fill = black] (1,1) circle [radius=\r];
        \draw[fill = black] (2,2) circle [radius=\r];
        \draw[fill = black] (3,3) circle [radius=\r];
        \draw[fill = black] (4,3) circle [radius=\r];
        \draw[fill = black] (5,3) circle [radius=\r];
        \draw[fill = black] (6,3) circle [radius=\r];
        \draw[fill = black] (7,4) circle [radius=\r];
        \draw[fill = black] (8,4) circle [radius=\r];
        \draw[-][black] (0,1) -- (1,1);
        \draw[-][black] (1,1) -- (2,2);
        \draw[-][black] (2,2) -- (3,3);
        \draw[-][black] (3,3) -- (4,3);
        \draw[-][black] (4,3) -- (5,3);
        \draw[-][black] (5,3) -- (6,3);
        \draw[-][black] (6,3) -- (7,4);
        \draw[-][black] (7,4) -- (8,4);
        \draw[fill = black] (0,0) circle [radius=\r];
        \draw[fill = black] (1,0) circle [radius=\r];
        \draw[fill = black] (2,1) circle [radius=\r];
        \draw[fill = black] (3,1) circle [radius=\r];
        \draw[fill = black] (5,3) circle [radius=\r];
        \draw[fill = black] (4,2) circle [radius=\r];
        \draw[fill = black] (5,3) circle [radius=\r];
        \draw[fill = black] (6,3) circle [radius=\r];
        \draw[fill = black] (7,3) circle [radius=\r];
        \draw[fill = black] (8,3) circle [radius=\r];
        \draw[-][black] (0,0) -- (1,0);
        \draw[-][black] (1,0) -- (2,1);
        \draw[-][black] (2,1) -- (3,1);
        \draw[-][black] (3,1) -- (5,3);
        \draw[-][black] (4,3) -- (5,3);
        \draw[-][black] (5,3) -- (6,3);
        \draw[-][black] (6,3) -- (7,3);
        \draw[-][black] (7,3) -- (8,3); 
        \draw (8.5, 4) node{$\bl_1$};
        \draw (8.5, 3) node{$\bl_2$};
        \draw (-0.5, 1) node{$0$};
\end{scope}
\end{tikzpicture}
\end{center}
\caption{The figure depicts the two layers $\bl_1$ and $\bl_2$, satisfying $\bl=(\bl_1,\bl_2)\in\mathcal{TL}_N$.  }
\label{fig:two layers}
\end{figure} 

\begin{definition}\label{def:two layer Gibbs measure} 
We define the two-layer measures.
\begin{enumerate}
    \item [(1)]
We will use $\mathcal{TL}_N$ to denote the space of 
\[\bl=(\bl_1,\bl_2)\in\mathbb{Z}^{2N+2},\quad \bl_i=(\lambda_i(0),\dots,\lambda_i(N)),\mbox{ }i=1,2\]
satisfying the following conditions:
\begin{enumerate}
    \item[$\bullet$] $\lambda_1(0)=0$,
    \item[$\bullet$] $\lambda_1(j)\geq\lambda_2(j)$ for $j\in\llbracket0,N\rrbracket$,
    \item[$\bullet$] $\lambda_i(j)-\lambda_i(j-1)\in\{0,1\}$ for $i\in\{1,2\}$ and $j\in\llbracket1,N\rrbracket$.
\end{enumerate}
See Figure \ref{fig:two layers} for an illustration.

\item [(2)]
Let $\a,\b,\c,\d,q$ satisfy \eqref{eq:conditions qABCD} and $\a\b<1$. For $\bl\in\mathcal{TL}_N$, we define the two-layer weight
\begin{multline}\label{eq:defining two layer weight}
\wt_N^{(q,\a,\b,\c,\d)}(\bl):=\a^{\lambda_1(0)-\lambda_2(0)}\b^{\lambda_1(N)-\lambda_2(N)}\frac{(\c\d;q)_{\lambda_1(N)-\lambda_2(N)}}{(q;q)_{\lambda_1(N)-\lambda_2(N)}}\times\\
\prod_{j=1}^N\lb\sum_{\mathsf{u},\mathsf{v}\in\{0,1\}}\one_{\substack{\la_1(j)-\la_1(j-1)=\mathsf{u} \\ \la_2(j)-\la_2(j-1)=\mathsf{v} }} W^{(q,\c,\d)}\lb\la_1(j)-\la_2(j)\mid \mathsf{u},\mathsf{v}\rb\rb,
\end{multline}
where for $\mathsf{u},\mathsf{v}\in\{0,1\}$ and $x\in\ZZ_{\geq0}$, we write (with the convention that $0^0 = 1$) 
\be\label{eq:W}
W^{(q,\c,\d)}(x \mid \mathsf{u},\mathsf{v}):=\begin{cases}
    \displaystyle 1-q^x\quad&\mbox{ if }(\mathsf{u},\mathsf{v})=(1,0),\\
    \displaystyle 1+\d q^x\quad&\mbox{ if }(\mathsf{u},\mathsf{v})=(1,1),\\
    \displaystyle 1+\c q^x\quad&\mbox{ if }(\mathsf{u},\mathsf{v})=(0,0),\\
    \displaystyle 1-\c\d q^x\quad&\mbox{ if }(\mathsf{u},\mathsf{v})=(0,1),\\
\end{cases}
\ee 
We then consider the normalization constant
\be\label{eq:def of normalizing constant}
Z_N^{(q,\a,\b,\c,\d)}:=\sum_{\bl\in\mathcal{TL}_N}\wt_N^{(q,\a,\b,\c,\d)}(\bl),
\ee 
which, as shown in Theorem \ref{thm:two layer representation} below, is a finite constant. 
The two-layer measure is a probability measure on the space $\mathcal{TL}_N$ defined, for any $\bl\in\mathcal{TL}_N$, by
\be 
\PP_N^{(q,\a,\b,\c,\d)}(\bl)=\wt_N^{(q,\a,\b,\c,\d)}(\bl)/Z_N^{(q,\a,\b,\c,\d)}.
\ee 
\end{enumerate}
\end{definition}

The next theorem shows that the open ASEP stationary measure is a marginal of the two-layer measure. 
\begin{theorem} \label{thm:two layer representation}
    Let $\a,\b,\c,\d,q$ satisfy \eqref{eq:conditions qABCD} and $\a\b<1$. Then the normalization constant \eqref{eq:def of normalizing constant} is finite, hence
    the two-layer measure $\PP_N^{(q,\a,\b,\c,\d)}$ is a well-defined probability measure on $\mathcal{TL}_N$. The marginal distribution of $\PP_N^{(q,\a,\b,\c,\d)}$ on the first layer $\bl_1$ has the same law as the height function $\tau_1+\dots+\tau_j$, $j\in\llbracket0,N\rrbracket$ of the stationary measure $\mu_N^{(q,\a,\b,\c,\d)}$ of open ASEP.
\end{theorem}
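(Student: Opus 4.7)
The plan is to unpack the matrix product ansatz of Lemma \ref{lem:MPA} using the explicit Enaud--Derrida matrices of Lemma \ref{lem:Enaud Derrida representation}, and recognize the resulting multiple sum as a sum over two-layer configurations. For the finiteness and nonvanishing of the denominator in \eqref{eq:MPA open ASEP}, I would note that $\a\b<1$ together with $\c,\d\in(-1,0]$ forces $\a\b\c\d\in(-1,1)$, so $\a\b\c\d\notin\{q^{-\ell}:\ell\in\ZZ_{\geq0}\}$ and Remark \ref{rmk:denomenator 0} applies; absolute convergence of $\ll W|(\E+\D)^N|V\rr$ follows from the tridiagonal structure of $\D,\E$ with entries bounded uniformly in the row index, combined with the geometric decay of $\ll W|$ and $|V\rr$ which is guaranteed by $\a\b<1$.

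Writing $M_i:=\D$ when $\tau_i=1$ and $M_i:=\E$ when $\tau_i=0$, the numerator expands as
\[
\ll W|\prod_{i=1}^N M_i|V\rr = \sum_{n_0,\dots,n_N\geq 0}\a^{n_0}\biggl(\prod_{j=1}^N (M_j)_{n_{j-1},n_j}\biggr)\b^{n_N}\frac{(\c\d;q)_{n_N}}{(q;q)_{n_N}}.
\]
The tridiagonal structure forces $n_j-n_{j-1}\in\{0,+1\}$ when $\tau_j=1$ and $n_j-n_{j-1}\in\{-1,0\}$ when $\tau_j=0$. I would then identify each such path $(n_0,\dots,n_N)$ with a two-layer configuration $\bl\in\mathcal{TL}_N$ whose first layer has the prescribed increments $\tau_j$: set $\lambda_1(j):=\tau_1+\dots+\tau_j$ and $\lambda_2(j):=\lambda_1(j)-n_j$, so that the gap $d_j:=\lambda_1(j)-\lambda_2(j)$ equals $n_j\geq0$. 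The second-layer increments $\mathsf{v}_j=\tau_j-(n_j-n_{j-1})$ then automatically lie in $\{0,1\}$, and the map is a bijection with the set of $\bl\in\mathcal{TL}_N$ having first-layer increments $\tau_j$.

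What remains is a direct four-case verification: for each $(\tau_j,n_j-n_{j-1})$, the corresponding entry of $\D$ or $\E$ equals $W^{(q,\c,\d)}(d_j\mid \tau_j,\mathsf{v}_j)/(1-q)$ as defined in \eqref{eq:W}; for instance $\tau_j=1,\,n_j=n_{j-1}+1$ uses $\D_{n_{j-1},n_j}=(1-q^{n_j})/(1-q)$, matching $W(d_j\mid 1,0)=1-q^{d_j}$, and analogously for the other three combinations. The boundary factors $\a^{n_0}$ and $\b^{n_N}(\c\d;q)_{n_N}/(q;q)_{n_N}$ match those in \eqref{eq:defining two layer weight} via $n_0=\lambda_1(0)-\lambda_2(0)$ and $n_N=d_N$. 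Consequently
\[
\ll W|\prod_{i=1}^N M_i|V\rr = (1-q)^{-N}\sum_{\bl}\wt_N^{(q,\a,\b,\c,\d)}(\bl),
\]
with the sum restricted to $\bl\in\mathcal{TL}_N$ with first-layer increments $\tau_j$. Specializing to all $\tau$ gives $\ll W|(\E+\D)^N|V\rr=(1-q)^{-N}Z_N^{(q,\a,\b,\c,\d)}$, which simultaneously proves $Z_N<\infty$ and, upon taking the ratio in \eqref{eq:MPA open ASEP}, the desired marginal identity. The main obstacle is the careful case-by-case bookkeeping matching each matrix entry with the corresponding two-layer weight; the convergence and bijection arguments are routine once the tridiagonal structure is isolated.
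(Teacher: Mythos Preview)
Your proposal is correct and follows essentially the same approach as the paper: both identify the gap process $n_j=\lambda_1(j)-\lambda_2(j)$ with the index path in the Enaud--Derrida matrix product, verify that the bidiagonal entries of $\D,\E$ match the four cases of $W^{(q,\c,\d)}$ in \eqref{eq:W}, and read off the marginal identity from \eqref{eq:MPA open ASEP}. The only cosmetic difference is direction---the paper starts from a given $\bl\in\mathcal{TL}_N$ and rewrites $\wt_N^{(q,\a,\b,\c,\d)}(\bl)$ as a product of matrix entries before summing out $\bl_2$, whereas you expand the matrix product first and then build the bijection to $\mathcal{TL}_N$; the content is identical.
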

\begin{remark}
    The two-layer representation above is equivalent to the Motzkin path representation given in Section 3 of the first version of \cite{bryc2024stationary} on arXiv. A similar representation in the $\gamma=\delta=0$ case is provided in \cite{brak2006combinatorial} (see also \cite{nestoridi2024approximating}).
Additionally, under Liggett's condition (i.e., $\c = \d = -q$ in our notation), our two-layer representation coincides with the one given in \cite{barraquand2023stationary} (see also \cite{derrida2004asymmetric} for open TASEP case) and with the ``two-layer Gibbs measure'' in \cite{bryc2024stationary} (see also \cite{bryc2024two} for open TASEP).  
\end{remark}
\begin{proof}[Proof of Theorem~\ref{thm:two layer representation}]
    We consider the matrices $\D,\E$ and vectors $\ll W|,|V\rr$ given by the Enaud--Derrida representation \cite{enaud2004large} (see Lemma \ref{lem:Enaud Derrida representation} above). 
    For $i\in\ZZ_{\geq0}$, we use $W_i$ (resp.\ $V_i$) to denote the $i$-th entry of the row vector $\ll W|$ (resp.\ the column vector $|V\rr$).  
    For $i,j\in\ZZ_{\geq0}$, we use $\D_i^j$ (resp.\ $\E_{i}^j$) to denote the $(i,j)$-th entry of the matrix $\D$ (resp.\ the matrix $\E$). In view of \eqref{eq:defining two layer weight}, for any $\bl\in\mathcal{TL}_N$, we have
    \begin{align*}
    \MoveEqLeft[1]
    \wt_N^{(q,\a,\b,\c,\d)}(\bl)\\
    &=(1-q)^N\cdot W_{\la_1(0)-\la_2(0)}\cdot\prod_{j=1}^N
    \lb\one_{\la_1(j)-\la_1(j-1)=0}\E+\one_{\la_1(j)-\la_1(j-1)=1}\D\rb_{\la_1(j-1)-\la_2(j-1)}^{\la_1(j)-\la_2(j)}
    \cdot V_{\la_1(N)-\la_2(N)}.
    \end{align*}
    For any fixed $\bl_1$, we sum over $\bl_2$ and get
    \be \label{eq:marginal on la1}
    \sum_{\bl_2: (\bl_1,\bl_2)\in\mathcal{TL}_N}\wt_N^{(q,\a,\b,\c,\d)}(\bl)=(1-q)^N\Bigg{\ll} W\Bigg{|}\prod_{i=1}^N\lb\one_{\la_1(j)-\la_1(j-1)=0}\E+\one_{\la_1(j)-\la_1(j-1)=1}\D\rb\Bigg{|}V\Bigg{\rr}.
    \ee 
    We next sum over $\bl_1$ and get
    \be \label{eq:marginal la1la2}
    Z_N^{(q,\a,\b,\c,\d)}=\sum_{\bl\in\mathcal{TL}_N}\wt_N^{(q,\a,\b,\c,\d)}(\bl)=(1-q)^N\ll W|(\D+\E)^{N}|V\rr.
    \ee 
    In view of Remark \ref{rmk:denomenator 0} and the fact that $\ll W|V\rr$ is finite, the normalization constant $Z_N^{(q,\a,\b,\c,\d)}$ is finite. The result then follows from combining \eqref{eq:marginal on la1} and \eqref{eq:marginal la1la2}, in light of Lemma \ref{lem:MPA}. This concludes the proof.
\end{proof}

We next provide an asymptotic result for the normalization constant, which will be useful later. It is essentially a consequence of results of \cite{uchiyama2004asymmetric}.
\begin{lemma}\label{lem:log asy of normalization constant}
Let $\a,\b,\c,\d,q$ satisfying \eqref{eq:conditions qABCD} and $\a\b<1$. Then,
    \begin{equation}\label{eq:log asy of normalization constant}
        \lim_{N\rightarrow\infty}\frac{1}{N}\log Z_N^{(q,\a,\b,\c,\d)}=-\log(J(\a,\b)),
    \end{equation}
    where the function $J(\a,\b)$ is defined by \eqref{eq:J}.
\end{lemma}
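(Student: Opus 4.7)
The starting point is the identity
\[
Z_N^{(q,\a,\b,\c,\d)} = (1-q)^N \ll W|(\D+\E)^N|V\rr
\]
derived as \eqref{eq:marginal la1la2} in the proof of Theorem \ref{thm:two layer representation}, using the Enaud--Derrida matrices and vectors. Taking $\tfrac{1}{N}\log$ of both sides, the lemma reduces to showing
\[
\lim_{N\to\infty}\frac{1}{N}\log\ll W|(\D+\E)^N|V\rr = -\log\bigl((1-q)\,J(\a,\b)\bigr),
\]
since the $(1-q)^N$ prefactor contributes $\log(1-q)$ which will precisely cancel the $-\log(1-q)$ hidden inside the right-hand side.

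To extract this exponential growth rate, the plan is to invoke the Askey--Wilson integral representation of $\ll W|(\D+\E)^N|V\rr$ established by Uchiyama, Sasamoto and Wadati \cite{uchiyama2004asymmetric}. With Askey--Wilson parameters matched to $\a,\b,\c,\d,q$ in the standard way, their formula writes this scalar as a contour integral of $\bigl((2+z+z^{-1})/(1-q)\bigr)^N$ against an explicit meromorphic kernel whose only singularities in the relevant annulus are simple poles at $z\in\{\a,\c,1/\b,1/\d\}$ together with their $q$-shifts. The factor $(2+z+z^{-1})/(1-q)$ attains its maximum modulus $4/(1-q)$ on the unit circle at $z=1$. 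Deforming the contour and applying Laplace's method to the unit-circle part while collecting residues at poles crossed in the deformation then leaves three candidate dominant contributions: (i) the saddle at $z=1$, contributing growth rate $\log\bigl(4/(1-q)\bigr)$; (ii) the residue at $z=\a$, picked up precisely when $\a>1$, contributing $\log\bigl((1+\a)^2/((1-q)\a)\bigr)$; and (iii) the residue at $z=1/\b$, picked up precisely when $\b>1$, contributing $\log\bigl((1+\b)^2/((1-q)\b)\bigr)$. The poles at $\c$ and $1/\d$ lie strictly inside the unit disk since $\c,\d\in(-1,0]$, hence cannot dominate.

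In the fan region $\a\b<1$, at most one of $\a,\b$ can exceed $1$, so at most one boundary residue is present and the comparison is unambiguous: the saddle dominates in the maximal current phase $\a,\b\leq 1$; the $z=\a$ residue dominates in the low density phase $\a>1$ (which forces $\b<1$); and the $z=1/\b$ residue dominates in the high density phase $\b>1$ (which forces $\a<1$). Combining the resulting growth rate with the $(1-q)^N$ prefactor yields exactly $-\log J(\a,\b)$ in each case, matching \eqref{eq:J}. The main obstacle is the careful Laplace/residue asymptotic analysis of the Askey--Wilson integral --- in particular verifying that the subdominant contributions and the Gaussian corrections at the saddle are of at most polynomial order in $N$, so that only the leading exponential survives at the scale $\tfrac{1}{N}\log(\cdot)$. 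The fan-region hypothesis $\a\b<1$ is convenient in this step as it prevents the two boundary poles $z=\a$ and $z=1/\b$ from colliding (which happens precisely on $\a\b=1$) and guarantees, via Lemma \ref{lem:Enaud Derrida representation}, that $\ll W|V\rr<\infty$ so that the integral formula is applicable in the first place.
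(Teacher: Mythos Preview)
Your approach is essentially the same as the paper's: both start from \eqref{eq:marginal la1la2} and reduce the lemma to the exponential growth rate of $\ll W|(\D+\E)^N|V\rr$, which is then extracted from the Uchiyama--Sasamoto--Wadati Askey--Wilson machinery. The paper simply cites \cite[Section~6.1]{uchiyama2004asymmetric} for that asymptotic rather than re-sketching the saddle/residue analysis as you do.

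One point the paper makes explicit that you gloss over: the Askey--Wilson integral formula in \cite{uchiyama2004asymmetric} is written for the USW matrices and vectors, not the Enaud--Derrida ones used to derive \eqref{eq:marginal la1la2}. To transfer the asymptotics you must invoke the representation-independence of $\ll W|(\D+\E)^N|V\rr / \ll W|V\rr$ recorded in Remark~\ref{rmk:denomenator 0}. Since the two values of $\ll W|V\rr$ are finite nonzero $N$-independent constants (Lemma~\ref{lem:Enaud Derrida representation} for the Enaud--Derrida side), this does not affect the $\tfrac{1}{N}\log$ limit, but strictly speaking it is a step in the argument.
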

\begin{proof}
In view of Remark \ref{rmk:denomenator 0}, the ratio
\be\label{eq:gergerg}
\ll W|(\D+\E)^{N}|V\rr/\ll W|V\rr
\ee
does not depend on the specific examples of matrices $\D$, $\E$ and vectors $\ll W|$, $|V\rr$ that satisfy the DEHP algebra \eqref{eq:DEHP algebra}. Using a different example constructed by M.\ Uchiyama, T.\ Sasamoto and M.\ Wadati in \cite{uchiyama2004asymmetric}, known as the USW representation, \cite[Section 6.1]{uchiyama2004asymmetric} computed the asymptotics of \eqref{eq:gergerg}, which, combined with \eqref{eq:marginal la1la2}, yields this result. 
\end{proof}

For another computation of the asymptotics of \eqref{eq:gergerg} using again the USW representation, we refer the reader to \cite{bryc2017asymmetric} and \cite[Lemmas 3.1 and 4.5]{bryc2019limit}.

\subsection{An estimate for the open TASEP two-layer measure} \label{sec:introduce important estimate}
In this subsection, we provide a key estimate for a special type of two-layer measure, $\PP_N^{(0,\a,\b,0,0)}$, which corresponds to the open TASEP. These measures can be viewed as endpoint reweightings of two random walks conditioned not to intersect. Using this estimate, we will compare the two-layer weights for the open ASEP with those for the open TASEP.
\begin{definition}[Height profile]\label{def:height profile of two layer}
    For a two-layer configuration $\bl\in\mathcal{TL}_N$, we denote by $h_N[\bl]\in C_0([0,1],\RR)$ the (re-scaled) height profile corresponding to it, defined as
\[h_N[\bl]\lb\frac{k}{N}\rb=\frac{\lambda_1(k)}{N},\quad k=0,\dots,N.\]
and by linear interpolation.  
\end{definition}
We will use the following key estimate of the two-layer measure $\PP_N^{(0,\a,\b,0,0)}$, Proposition~\ref{prop:key estimate}, in the proof of the large deviation principle. Roughly, it states that, with not too low probability, $\bl_1$ and $\bl_2$ are separated on most of $\llbracket0,N\rrbracket$. It will allow us to compare the weight function under the case of general parameters in the fan region and the $q=0$ case, as will be captured in Corollary~\ref{cor:key estimate}. The proof of Proposition \ref{prop:key estimate} will be the focus of Section \ref{sec:proof of estimate of RW} and will appear in Section \ref{subsec:proof of estimate of RW}. 
Before stating it, observe from Definition \ref{def:two layer Gibbs measure} that for $\bl\in\mathcal{TL}_N$, we have
\[
\wt_N^{(0,\a,\b,0,0)}(\bl)=\a^{\lambda_1(0)-\lambda_2(0)}\b^{\lambda_1(N)-\lambda_2(N)},\quad
\PP_N^{(0,\a,\b,0,0)}(\bl)=\wt_N^{(0,\a,\b,0,0)}(\bl)/Z_N^{(0,\a,\b,0,0)}.
\]
\begin{proposition}\label{prop:key estimate}
    Let $\a,\b\geq0$ satisfy $\a\b<1$. Fix $d\in\mathbb{N}$ and $0<\th_1<\dots<\th_d<\th_{d+1}=1$ and $0<u_j<v_j$, $j\in\llb1,d+1\rrb$ satisfying 
    $v_{d+1}-u_j<\th_{d+1}-\th_j$ for $j\in\llb1,d\rrb$. Then for any fixed $r\in\mathbb{N}$ and $\ep>0$ there exists $N_0\in\mathbb{N}$ such that for $N\geq N_0$,
    \be
    \PP_N^{(0,\a,\b,0,0)}\lb  \#\{j: \lambda_1(j)-\lambda_2(j)\leq r\}\leq \lceil\varepsilon N\rceil \mbox{ } \Big{|} \mbox{ } h_N[\bl](\th_j)\in(u_j,v_j), j\in\llb1,d+1\rrb\rb\geq e^{-N^{4/5}}.
    \ee 
\end{proposition}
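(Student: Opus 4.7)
I would construct an explicit sub-event $\mathcal{E}$ whose conditional probability is at least $e^{-N^{4/5}}$ and on which $\lambda_1(k) - \lambda_2(k) > r$ at all but at most $\lceil\varepsilon N\rceil$ sites. Set $M := \lceil N^{3/5} \rceil$ and define
\[
\mathcal{G} := \{-\lambda_2(0) \geq M\} \cap \{\lambda_1(N) - \lambda_2(N) \geq M\}.
\]
The scale $M = N^{3/5}$ is chosen so that it dominates the $O(\sqrt{N \log N})$ maximal fluctuations of Bernoulli bridges, while the weight cost of $\mathcal{G}$ is only $e^{-O(N^{3/5})}$, well above the target $e^{-N^{4/5}}$. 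The first step is to show $\PP_N^{(0,\a,\b,0,0)}(\mathcal{G} \mid h_N[\bl](\th_j) \in (u_j, v_j),\, j \in \intint{1, d+1}) \geq e^{-C N^{3/5}}$ for some $C = C(\a, \b) > 0$. Conditioning further on $\bl_1$, the joint law of the endpoints $(\lambda_2(0), \lambda_2(N))$ is proportional to $\a^{-\lambda_2(0)} \b^{-\lambda_2(N)}$ times the number of non-decreasing paths with increments in $\{0,1\}$ joining them and staying below $\bl_1$. A saddle-point analysis of the sum $\sum_{k_L, k_R \geq 0} \a^{k_L} \b^{k_R} \binom{N}{\lambda_1(N) + k_L - k_R}$ (with $k_L = -\lambda_2(0)$, $k_R = \lambda_1(N) - \lambda_2(N)$) in each of the LD, HD, MC phases, using $\a\b < 1$, shows that forcing $k_L, k_R \geq M$ costs at most a factor $e^{-CM}$. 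In phases where a typical $k_L$ or $k_R$ is already $\Theta(N)$, e.g.\ $k_L$ in the LD phase, the corresponding half of $\mathcal{G}$ is essentially automatic.

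Next, conditional on $\mathcal{G}$ and on $\bl_1$, the spatial Markov property of the two-layer measure (available since $q = 0$) makes the conditional distribution of $\bl_2$, given its endpoints $(\lambda_2(0), \lambda_2(N))$, a uniform measure on non-decreasing paths with increments in $\{0,1\}$ that stay below $\bl_1$. By the monotone coupling in Lemma~\ref{lem:two paths monotone}, this law is stochastically dominated by the unconstrained Bernoulli bridge with the same endpoints, so $\gamma := \lambda_1 - \lambda_2$ stochastically dominates $\lambda_1$ minus such a bridge. Combining standard maximal-deviation bounds for Bernoulli bridges with concentration of $\bl_1$ around its expected profile under the height-profile conditioning (derivable from known properties of the open TASEP stationary measure in the fan region), $\gamma(k)$ stays within $O(\sqrt{N \log N})$ of the linear interpolation between $\gamma(0)$ and $\gamma(N)$, both of which exceed $M$. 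Since $M \gg \sqrt{N \log N}$, $\gamma(k) > r$ at all but $o(N)$ sites with conditional probability $1 - o(1)$ given $\mathcal{G}$ and the height event.

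The main obstacle is the saddle-point partition function estimate, which must be carried out uniformly across the three phases (LD, HD, MC) and must account for the non-crossing constraint $\bl_2 \leq \bl_1$ (which can only reduce the combinatorial count and, one expects, only by a polynomial factor in $N$). A secondary concern is controlling $\bl_1$'s deviations from its expected profile under the finite-point height conditioning, which the two-layer representation makes tractable via monotone coupling even in the absence of a product structure. With the substantial slack $e^{-C N^{3/5}} \gg e^{-N^{4/5}}$, polynomial-order losses and sub-$N^{3/5}$ concentration errors are absorbed easily, yielding $\PP(\mathcal{G} \cap \{\gamma(k) > r \text{ on all but } o(N) \text{ sites}\} \mid \text{cond.}) \geq e^{-CN^{3/5}}(1 - o(1)) \geq e^{-N^{4/5}}$ for $N$ large.
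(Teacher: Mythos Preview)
Your approach has a genuine gap: forcing separation only at the two endpoints does not guarantee separation throughout, because the conditioning pins $\bl_1$ to a \emph{piecewise} linear profile at the $\th_j$, not to a single line. Concretely, take $d=1$, $\th_1=1/2$, and a conditioning forcing $\la_1(N/2)\approx 0.05N$ and $\la_1(N)\approx 0.45N$ (admissible, since $v_2-u_1\approx 0.4<0.5=1-\th_1$). On $\mathcal{G}$ with $\la_2(0)=-M$ and $\la_2(N)=\la_1(N)-M$, the free Bernoulli bridge for $\bl_2$ has mean $\approx 0.225N-M$ at $N/2$, which lies far \emph{above} $\la_1(N/2)$. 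Your stochastic-domination bound then gives only $\gamma(N/2)\geq\la_1(N/2)-(\text{free bridge at }N/2)\approx -0.175N$, which is vacuous; and the non-crossing constraint, rather than helping, forces $\bl_2$ to hug $\bl_1$ near $N/2$, so that $\gamma$ there is typically $O(1)$, not $\Theta(M)$. The claim that ``$\gamma(k)$ stays within $O(\sqrt{N\log N})$ of the linear interpolation between $\gamma(0)$ and $\gamma(N)$'' therefore fails.

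The paper's remedy is to force separation of order $N^{3/4}$ at \emph{every} intermediate point $\lfloor\th_j N\rfloor$, $j\in\llb 1,d\rrb$, rather than only at the endpoints. After conditioning on $\bl_1$ and on the endpoints of $\bl_2$, this is achieved by combining the FKG inequality (to factorize over $j$), monotonicity (to raise the boundary data and weaken the upper barrier $\bl_1$ to a single-point constraint), and an elementary hypergeometric tail comparison (Lemma~\ref{lem:hypergeometric bound}); no saddle-point or phase-by-phase analysis is needed, and the total cost is $N^{-2d\lceil N^{3/4}\rceil}$. Once separation $\geq N^{3/4}$ holds at every $\th_j$, the Gibbs property factorizes the law over the sub-intervals $\llb\lfloor\th_{j-1}N\rfloor,\lfloor\th_j N\rfloor\rrb$, and the fluctuation bound (together with Lemma~\ref{lem:two paths monotone}) now applies legitimately on each piece, since both endpoints of each piece are well-separated; this yields separation $>r$ on all but $\lceil\ep N\rceil$ sites with probability $\geq 1/2$. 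The first and last sub-intervals, where the true endpoint gaps $\la_1(0)-\la_2(0)$ and $\la_1(N)-\la_2(N)$ may be $O(1)$, are absorbed into the $\lceil\ep N\rceil$-site allowance.
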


As a corollary, we next give a comparison result between the two-layer weights for open ASEP and the ones for open TASEP.

\begin{corollary}\label{cor:key estimate}
    Let $\a,\b,\c,\d,q$ satisfying \eqref{eq:conditions qABCD} and $\a\b<1$. Fix $d\in\mathbb{N}$ and $0<\th_1<\dots<\th_d<\th_{d+1}=1$ and $0<u_j<v_j$, $j\in\llb1,d+1\rrb$ satisfying 
    $v_{d+1}-u_j<\th_{d+1}-\th_j$ for $j\in\llb1,d\rrb$. Then we have
\be  \label{eq:log asy only need to prove}
\lim_{N\to\infty}\frac{1}{N} \log
\frac{\sum_{\bl\in\mathcal{TL}_N: h_N[\bl](\th_j)\in(u_j,v_j), j\in\llb1,d+1\rrb}\wt_N^{(q,\a,\b,\c,\d)}(\bl)}{\sum_{\bl\in\mathcal{TL}_N: h_N[\bl](\th_j)\in(u_j,v_j), j\in\llb1,d+1\rrb}\wt_N^{(0,\a,\b,0,0)}(\bl)}=0.
\ee
\end{corollary}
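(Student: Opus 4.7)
The plan is to compare the two weights pointwise and use Proposition~\ref{prop:key estimate} to handle a small ``bad'' event where the comparison is worst. Writing $S^{(q)}$ and $S^{(0)}$ respectively for the numerator and denominator of \eqref{eq:log asy only need to prove}, the key observation is that the ratio of weights
\[
B(\bl):=\frac{\wt_N^{(q,\a,\b,\c,\d)}(\bl)}{\wt_N^{(0,\a,\b,0,0)}(\bl)}
= \frac{(\c\d;q)_{m_N}}{(q;q)_{m_N}}\,\prod_{j=1}^N W^{(q,\c,\d)}\!\bigl(\la_1(j)-\la_2(j)\,\big|\,\mathsf{u}_j,\mathsf{v}_j\bigr)
\]
(with $m_N=\la_1(N)-\la_2(N)$ and $\mathsf{u}_j,\mathsf{v}_j$ the increments) is free of $\a,\b$, since the prefactors $\a^{\la_1(0)-\la_2(0)}\b^{\la_1(N)-\la_2(N)}$ cancel.

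First, I would record universal bounds on the factors. Under \eqref{eq:conditions qABCD}, each entry of $W^{(q,\c,\d)}(x\mid \mathsf{u},\mathsf{v})$ lies in $[c_0,1]$ for a constant $c_0:=\min(1-q,1+\c,1+\d,1-\c\d)>0$ (using that in the $(\mathsf{u},\mathsf{v})=(1,0)$ case one automatically has $x\geq 1$ by the ordering $\la_1\geq \la_2$), while the Pochhammer prefactor lies in $[(\c\d;q)_\infty,(q;q)_\infty^{-1}]$. Moreover, there is a constant $C>0$ so that whenever $x>r$ every $W^{(q,\c,\d)}(x\mid\mathsf{u},\mathsf{v})\geq 1-Cq^r$. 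The upper bound $B(\bl)\leq (q;q)_\infty^{-1}$ is immediate and yields
\[
\limsup_{N\to\infty}\frac1N\log\frac{S^{(q)}}{S^{(0)}}\leq 0.
\]

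For the lower bound, fix $\varepsilon>0$ and $r\in\mathbb{N}$, and let $E$ be the event
\[
E:=\bigl\{\bl\in\mathcal{TL}_N:\#\{j:\la_1(j)-\la_2(j)\leq r\}\leq \lceil\varepsilon N\rceil\bigr\}.
\]
On $E$, at most $\lceil\varepsilon N\rceil$ factors in $\prod_{j=1}^N W^{(q,\c,\d)}(\cdots)$ can be as small as $c_0$, and the remaining factors are at least $1-Cq^r$; hence
\[
B(\bl)\geq (\c\d;q)_\infty\cdot c_0^{\lceil\varepsilon N\rceil}\cdot(1-Cq^r)^N\quad\text{on }E.
\]
Writing $A$ for the height-constraint event $\{h_N[\bl](\th_j)\in(u_j,v_j),\,j\in\llb1,d+1\rrb\}$, Proposition~\ref{prop:key estimate} gives
\[
\sum_{\bl\in E\cap A}\wt_N^{(0,\a,\b,0,0)}(\bl)\;\geq\;e^{-N^{4/5}}\sum_{\bl\in A}\wt_N^{(0,\a,\b,0,0)}(\bl)=e^{-N^{4/5}}S^{(0)}.
\]
Restricting the sum defining $S^{(q)}$ to $\bl\in E\cap A$ and inserting the pointwise bound on $B(\bl)$ gives
\[
S^{(q)}\;\geq\;(\c\d;q)_\infty\,c_0^{\lceil\varepsilon N\rceil}(1-Cq^r)^N\,e^{-N^{4/5}}\,S^{(0)},
\]
so that
\[
\liminf_{N\to\infty}\frac1N\log\frac{S^{(q)}}{S^{(0)}}\;\geq\;\varepsilon\log c_0+\log(1-Cq^r).
\]
Sending $r\to\infty$ and then $\varepsilon\to 0$ completes the lower bound and hence the corollary.

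The only real content here is Proposition~\ref{prop:key estimate}; once it is granted, the main task is simply the bookkeeping above. The two parameters $r$ (threshold for separation) and $\varepsilon$ (tolerated fraction of close-together times) have to be chosen in the right order relative to $N\to\infty$, and one must verify that the $(1,0)$-type $W$-factor is never evaluated at $x=0$ (so that the $c_0>0$ lower bound on all individual factors actually holds), but these are routine. The substantive difficulty is entirely absorbed into Proposition~\ref{prop:key estimate}, whose proof is deferred to Section~\ref{sec:proof of estimate of RW}.
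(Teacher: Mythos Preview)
Your proof is correct and follows essentially the same approach as the paper's: bound the weight ratio $\wt_N^{(q,\a,\b,\c,\d)}/\wt_N^{(0,\a,\b,0,0)}$ above by a constant to get the $\limsup$, and for the $\liminf$ restrict to the separation event $E$ furnished by Proposition~\ref{prop:key estimate}, on which the ratio is bounded below by $c_0^{\lceil \varepsilon N\rceil}(1-Cq^r)^N$ up to a harmless constant, then send $r\to\infty$ and $\varepsilon\to 0$. The only cosmetic differences are in the constants (the paper uses $\mathsf{g}=\min(1+\c,1+\d,1-\c\d)$ and $(1-q^{r+1})^{N-\lceil\varepsilon N\rceil}$ in place of your $c_0$ and $(1-Cq^r)^N$), which do not affect the argument.
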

\begin{proof}[Proof of Corollary \ref{cor:key estimate} assuming Proposition \ref{prop:key estimate}]
For simplicity, we will write (where $\mathrm{Val}$ is short for ``values'')
\[
\mathrm{Val}:=\{\bl\in\mathcal{TL}_N: h_N[\bl](\th_j)\in(u_j,v_j), j\in\llb1,d+1\rrb\}.
\]
By Definition \ref{def:two layer Gibbs measure}, for any  $\bl\in\mathcal{TL}_N$, we have 
\begin{multline}\label{eq:egrgerergarf}
\wt_N^{(q,\a,\b,\c,\d)}(\bl)=\wt_N^{(0,\a,\b,0,0)}(\bl)\frac{(\c\d;q)_{\lambda_1(N)-\lambda_2(N)}}{(q;q)_{\lambda_1(N)-\lambda_2(N)}}\times\\
\prod_{j=1}^N\lb\sum_{\mathsf{u},\mathsf{v}\in\{0,1\}}\one_{\substack{\la_1(j)-\la_1(j-1)=\mathsf{u} \\ \la_2(j)-\la_2(j-1)=\mathsf{v} }} W^{(q,\c,\d)}\lb\la_1(j)-\la_2(j)\mid\mathsf{u},\mathsf{v}\rb\rb.
\end{multline}
Recall that $\a,\b\geq0$, $\c,\d\in(-1,0]$ and $q\in[0,1)$.
Observe from \eqref{eq:W} that, 
\be \label{eq:first bound W}
W^{(q,\c,\d)}\lb\la_1(j)-\la_2(j)\mid\mathsf{u},\mathsf{v}\rb\leq1
\ee 
and, as long as $(\lambda_1(j)-\lambda_2(j), \mathsf u, \mathsf v) \neq (0,1,0)$,
\be \label{eq:second bound W}
W^{(q,\c,\d)}\lb\la_1(j)-\la_2(j)\mid\mathsf{u},\mathsf{v}\rb\geq \max\lb \mathsf{g}, 1-q^{\la_1(j)-\la_2(j)}\rb,\quad \mathsf{g}:=\min(1+\c,1+\d,1-\c\d)\in(0,1].
\ee 
In particular, using \eqref{eq:first bound W}, the second line of \eqref{eq:egrgerergarf} lies in $(0,1]$. 
One can also observe that 
\be \label{eq:bound on cd;q}
1-\c\d\leq (\c\d;q)_n/(q;q)_n\leq 1/(q;q)_{\infty}\quad\mbox{ for all }n\in\ZZ_{\geq0}.
\ee 
Using \eqref{eq:egrgerergarf} we then have, for all $\bl\in\mathcal{TL}_N$,
\be\label{eq:upper bound weight}
\wt_N^{(q,\a,\b,\c,\d)}(\bl)\leq \wt_N^{(0,\a,\b,0,0)}(\bl)/(q;q)_{\infty}.
\ee
Therefore
\be \label{eq:limsupewfwevf}
\limsup_{N\to\infty}\frac{1}{N} \log
\frac{\sum_{\bl\in\mathrm{Val}}\wt_N^{(q,\a,\b,\c,\d)}(\bl)}{\sum_{\bl\in\mathrm{Val}}\wt_N^{(0,\a,\b,0,0)}(\bl)}\leq0.
\ee 

We next show the inequality in the other direction.  Fix any $r\in\mathbb{N}$ and $\varepsilon>0$. We will write (where Sep is short for ``separation'')
\[
\mathrm{Sep}:=\{\bl\in\mathcal{TL}_N: \#\{j: \lambda_1(j)-\lambda_2(j)\leq r\}\leq \lceil\varepsilon N\rceil\}.
\]
Next we observe that, since $\bl\in\mathcal T$, for no $j=1,\ldots,N$ in \eqref{eq:egrgerergarf} will it be the case that $(\lambda_1(j)-\lambda_2(j), \mathsf u, \mathsf v) = (0,1,0)$, $\lambda_1(j) - \lambda_1(j-1) = \mathsf{u}$, and $\lambda_2(j) - \lambda_2(j-1) = \mathsf{v}$; otherwise, it would hold that $\lambda_1(j-1) < \lambda_2(j-1)$. So we may apply the bound \eqref{eq:second bound W} to each factor in \eqref{eq:egrgerergarf}. For each $\bl\in\mathrm{Sep}$,
by combining  \eqref{eq:second bound W} and \eqref{eq:bound on cd;q},
in view of \eqref{eq:egrgerergarf}, we have
\[
\wt_N^{(q,\a,\b,\c,\d)}(\bl)\geq(1-\c\d)(1-q^{r+1})^{N-\lceil\varepsilon N\rceil}\mathsf{g}^{\lceil\varepsilon N\rceil }\wt_N^{(0,\a,\b,0,0)}(\bl).
\]
Therefore,
\begin{equation}\label{eq:vreagv}
    \sum_{\bl\in\mathrm{Val}}\wt_N^{(q,\a,\b,\c,\d)}(\bl)\geq\sum_{\bl\in\mathrm{Val}\cap\mathrm{Sep}}\wt_N^{(q,\a,\b,\c,\d)}(\bl)\geq
    (1-\c\d)(1-q^{r+1})^{N-\lceil\varepsilon N\rceil}\mathsf{g}^{\lceil\varepsilon N\rceil } \sum_{\bl\in\mathrm{Val}\cap\mathrm{Sep}}\wt_N^{(0,\a,\b,0,0)}(\bl). 
\end{equation}
Using Proposition \ref{prop:key estimate},  there exists $N_0\in\mathbb{N}$ such that for $N\geq N_0$, 
\be \label{eq:vger}
\sum_{\bl\in\mathrm{Val}\cap\mathrm{Sep}}\wt_N^{(0,\a,\b,0,0)}(\bl)\geq e^{-N^{4/5}} \sum_{\bl\in\mathrm{Val} }\wt_N^{(0,\a,\b,0,0)}(\bl).
\ee 
Combining \eqref{eq:vreagv} with \eqref{eq:vger}, we have that for $N\geq N_0$,
\[
\frac{\sum_{\bl\in\mathrm{Val}}\wt_N^{(q,\a,\b,\c,\d)}(\bl)}{\sum_{\bl\in\mathrm{Val}}\wt_N^{(0,\a,\b,0,0)}(\bl)} \geq (1-\c\d) (1-q^{r+1})^{N-\lceil\varepsilon N\rceil}\mathsf{g}^{\lceil\varepsilon N\rceil  }  e^{-N^{4/5}}.
\] 
Therefore
\begin{equation*}
    \begin{split}
        &\liminf_{N\to\infty}\frac{1}{N} \log
\frac{\sum_{\bl\in\mathrm{Val}}\wt_N^{(q,\a,\b,\c,\d)}(\bl)}{\sum_{\bl\in\mathrm{Val}}\wt_N^{(0,\a,\b,0,0)}(\bl)}\\&\geq\liminf_{N\to\infty}\lb\frac{1}{N} \log\lb (1-\c\d)(1-q^{r+1})^{N-\lceil\varepsilon N\rceil}\mathsf{g}^{\lceil\varepsilon N\rceil } e^{-N^{4/5}}\rb \rb=(1-\ep)\log(1-q^{r+1})+\ep\log(\mathsf{g}).
    \end{split}
\end{equation*}
Take $r\rightarrow\infty$, $\varepsilon \rightarrow0+$, the right hand side converges to $0$. Combining with \eqref{eq:limsupewfwevf} we conclude the proof.
\end{proof}

\section{Proof of two-layer separation}
\label{sec:proof of estimate of RW}
In this section, we provide the proof of the key estimate on separation of the two layers in the $q=0$ case (Proposition \ref{prop:key estimate}). The proof, appearing in Section \ref{subsec:proof of estimate of RW}, will rely on the properties of random walks discussed in Section \ref{subsec:useful results of random walks}.

\subsection{Useful results of random walks}
\label{subsec:useful results of random walks}
In this subsection, we provide some results on Bernoulli random walk bridges, which will be useful later in our proof.

\begin{definition}[Bernoulli paths and random walk bridges]
     Let $N\in\mathbb{N}$.
    \begin{enumerate}
        \item [(1)] A \emph{Bernoulli path} $L$ is a function $L:\llb0,N\rrb\rightarrow\ZZ$ such that $L(j)-L(j-1)\in\{0,1\}$ for $j\in\llbracket1,N\rrbracket$.
        \item [(2)] Let $f:\llbracket0,N\rrbracket\rightarrow\ZZ\cup\{\infty\}$, $g:\llbracket0,N\rrbracket\rightarrow\ZZ\cup\{-\infty\}$ be two functions and let $x,y\in\ZZ$. We denote by $\Omega(N,x,y,f,g)$ the set of Bernoulli paths $L$ such that $L(0)=x$, $L(N)=y$ and $L\leq f$, $L\geq g$ over $\llb0,N\rrb$. If $\Omega(N,x,y,f,g)\neq\emptyset$, then we denote by $\PP^{N,x,y,f,g}$ the uniform measure on it. When $f=\infty$ and $g=-\infty$ we also write $\Omega(N,x,y)$ and $\PP^{N,x,y}$ to denote $\Omega(N,x,y,\infty,-\infty)$ and $\PP^{N,x,y,\infty,-\infty}$. We call a path sampled from such a measure a \emph{Bernoulli random walk bridge}. 
        \item [(3)] Let $x_1,x_2,y_1,y_2\in\ZZ$. We denote by $\Omega(N,(x_1,x_2),(y_1,y_2))$ the set of pairs of Bernoulli paths $(L_1,L_2)$ such that $L_1\geq L_2$ on $\llb0,N\rrb$, $L_1(0)=x_1$, $L_2(0)=x_2$, $L_1(N)=y_1$ and $L_2(N)=y_2$. If $\Omega(N,(x_1,x_2),(y_1,y_2))\neq\emptyset$, we denote by $\PP^{N,(x_1,x_2),(y_1,y_2)}$ the uniform measure on it. We call a pair of paths sampled from such a measure \emph{non-intersecting Bernoulli random walk bridges}. 
    \end{enumerate}
\end{definition}

We next record a useful property of non-intersecting random walk bridges, namely their spatial Markov or Gibbs property. It follows immediately from the fact that the law of these processes is uniform on the appropriate set of Bernoulli paths. 

\begin{lemma}\label{lem:gibbs}
    Let $N\in\mathbb{N}$, $f:\intint{0,N}\to\ZZ\cup\{\infty\}$, $g:\intint{0,N}\to\ZZ\cup\{-\infty\}$ and $x,y\in\ZZ$ be such that $\Omega(N,x,y,f,g)\neq\emptyset$. Let $\intint{a,b}\subseteq \intint{0,N}$. Then for any $A\subseteq \ZZ^{\intint{a,b}}$,
    \begin{align*}
        \PP^{N,x,y,f,g}\left(L\in A\mid (L(x):x\not\in\intint{a+1,b-1})\right) = \PP^{\intint{a,b},L(a),L(b),f|_{\intint{a,b}},g|_{\intint{a,b}}}\left(L\in A\right),
    \end{align*}
    almost surely, where, in a slight abuse of notation, $\PP^{\intint{a,b},L(a),L(b),f|_{\intint{a,b}},g|_{\intint{a,b}}}$ is the law of a random walk bridge on $\intint{a,b}$ with boundary values $L(a)$ and $L(b)$ and conditioned to not intersect $f$ and $g$. 
    
    We also have that, for $x_1,x_2,y_1,y_2\in\ZZ$ such that $\Omega(N,(x_1,x_2),(y_1,y_2))\neq\emptyset$ and any $A\subseteq\ZZ^{\intint{0,N}}$, almost surely
    \begin{align*}
        \PP^{N,(x_1,x_2), (y_1,y_2)}(L_1\in A\mid L_2) &= \PP^{N,x_1, y_1, \infty, L_2}(L_1\in A) \quad\text{and}\\
        \PP^{N,(x_1,x_2), (y_1,y_2)}(L_2\in A\mid L_1) &= \PP^{N,x_2, y_2, L_1, -\infty}(L_2\in A).
    \end{align*} 
\end{lemma}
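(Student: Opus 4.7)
The plan is to observe that both parts are direct consequences of a single elementary fact: if $\mu$ is the uniform measure on a finite set $S$ and $\pi \colon S \to T$ is any map, then the conditional distribution of $\mu$ given $\pi$ is uniform on each nonempty fiber $\pi^{-1}(t)$. Since $\PP^{N,x,y,f,g}$ and $\PP^{N,(x_1,x_2),(y_1,y_2)}$ are by definition uniform measures on finite sets of Bernoulli paths, no probabilistic input beyond this is needed; the work is purely a matter of identifying the relevant fibers with the sets in the right-hand sides.

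For the first assertion, I would take $\pi \colon \Omega(N,x,y,f,g) \to \ZZ^{\intint{0,N} \setminus \intint{a+1,b-1}}$ to be the restriction map. For a boundary specification with $L(a) = u$ and $L(b) = v$, the fiber of $\pi$ is in canonical bijection with $\Omega(\intint{a,b}, u, v, f|_{\intint{a,b}}, g|_{\intint{a,b}})$: any element of the fiber is obtained by concatenating the fixed outer values with an arbitrary middle path in this restricted set. Here I use that the Bernoulli step condition and the inequalities $g \le L \le f$ are local and pointwise, so compatibility between the middle and outer pieces is only required at the two interface points $a$ and $b$, and this is already built into the boundary data $u$, $v$. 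Uniformity on the fiber then yields exactly the claimed formula.

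For the second assertion, I would instead take $\pi \colon \Omega(N,(x_1,x_2),(y_1,y_2)) \to \ZZ^{\intint{0,N}}$ sending $(L_1,L_2) \mapsto L_2$. For fixed $L_2$, the fiber consists of pairs whose first coordinate ranges over Bernoulli paths from $x_1$ to $y_1$ lying weakly above $L_2$, which is by definition $\Omega(N, x_1, y_1, \infty, L_2)$. Uniformity of the original measure then transfers directly to give the first stated equality; the second follows by the same argument with the roles of $L_1$ and $L_2$ swapped. There is no real obstacle here, since the content of the lemma is entirely combinatorial; the only thing to note is that the relevant fibers are automatically nonempty because the conditioning variable is drawn from the measure itself.
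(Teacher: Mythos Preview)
Your proposal is correct and matches the paper's approach exactly: the paper does not give a formal proof but simply remarks that the lemma ``follows immediately from the fact that the law of these processes is uniform on the appropriate set of Bernoulli paths.'' Your write-up makes this explicit by identifying the fibers of the relevant restriction/projection maps, which is precisely the content of that remark.
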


The following lemma allows us to monotonically couple two pairs of non-intersecting Bernoulli random walk bridges with ordered boundary data. It has been proven a number of times, perhaps first in \cite{cohn2000local}, but for notation closer to our own we refer the reader to \cite{dimitrov2021REU} or \cite{serio2023tightness}.

\begin{lemma}[\cite{cohn2000local,dimitrov2021REU,serio2023tightness}] \label{lem:monotonicity}
We have the monotone couplings of Bernoulli paths. Fix $N\in\mathbb{N}$.
\begin{enumerate}
    \item [(1)]    Suppose $g^{\mathrm{b}}, g^{\mathrm{t}}: \llbracket 0, N \rrbracket  \rightarrow \mathbb{Z}\cup\{-\infty\}$ with $g^{\mathrm{b}}\leq g^{\mathrm{t}}$ on $\llbracket0,N\rrbracket$. Suppose $x^{\mathrm{b}},x^{\mathrm{t}},y^{\mathrm{b}},y^{\mathrm{t}}\in\ZZ$ with $x^{\mathrm{b}}\leq x^{\mathrm{t}}$ and $y^{\mathrm{b}}\leq y^{\mathrm{t}}$. Assume that $\Omega(N,x^{\mathrm{b}},y^{\mathrm{b}},\infty,g^{\mathrm{b}})$ and $\Omega(N,x^{\mathrm{t}},y^{\mathrm{t}},\infty,g^{\mathrm{t}})$ are nonempty. Then there exists a coupling between $L^{\mathrm{b}}\sim\PP^{N,x^{\mathrm{b}},y^{\mathrm{b}},\infty,g^{\mathrm{b}}}$ and $L^{\mathrm{t}}\sim\PP^{N,x^{\mathrm{t}},y^{\mathrm{t}},\infty,g^{\mathrm{t}}}$ such that $L^{\mathrm{b}}\leq L^{\mathrm{t}}$ on $\llbracket0,N\rrbracket$.
    \item [(2)]    Suppose $f^{\mathrm{b}}, f^{\mathrm{t}}: \llbracket 0, N \rrbracket  \rightarrow \mathbb{Z}\cup\{\infty\}$ with $f^{\mathrm{b}}\leq f^{\mathrm{t}}$ on $\llbracket0,N\rrbracket$. Suppose $x^{\mathrm{b}},x^{\mathrm{t}},y^{\mathrm{b}},y^{\mathrm{t}}\in\ZZ$ with $x^{\mathrm{b}}\leq x^{\mathrm{t}}$ and $y^{\mathrm{b}}\leq y^{\mathrm{t}}$. Assume that $\Omega(N,x^{\mathrm{b}},y^{\mathrm{b}},f^{\mathrm{b}},-\infty)$ and $\Omega(N,x^{\mathrm{t}},y^{\mathrm{t}},f^{\mathrm{t}},-\infty)$ are nonempty. Then there exists a coupling between $L^{\mathrm{b}}\sim\PP^{N,x^{\mathrm{b}},y^{\mathrm{b}},f^{\mathrm{b}},-\infty}$ and $L^{\mathrm{t}}\sim\PP^{N,x^{\mathrm{t}},y^{\mathrm{t}},f^{\mathrm{t}},-\infty}$ such that $L^{\mathrm{b}}\leq L^{\mathrm{t}}$ on $\llbracket0,N\rrbracket$.
\end{enumerate}
\end{lemma}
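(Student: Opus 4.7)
The plan is to prove part (1) by a Glauber (heat-bath) Markov chain together with a monotone coupling of two instances of the chain, and to reduce part (2) to part (1) via a reflection symmetry.

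To construct the Markov chain on $\Omega(N, x, y, \infty, g)$, at each discrete time step pick $j \in \intint{1, N-1}$ uniformly and resample $L(j)$ from the conditional law of $\PP^{N, x, y, \infty, g}$ given the values $L(i)$ for $i \neq j$. Because $L$ is a Bernoulli path with fixed endpoints, this conditional law is supported on at most two points: $L(j)$ is uniquely forced unless $L(j+1) - L(j-1) = 1$, in which case $L(j) \in \{L(j-1), L(j-1) + 1\}$, and each is taken with probability $1/2$ when both respect $L(j) \geq g(j)$ (otherwise only the higher value is allowed). The resulting chain is irreducible, aperiodic, and stationary for $\PP^{N, x, y, \infty, g}$. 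To couple the bottom and top chains, drive them by the same site $j$ and the same $\mathrm{Uniform}(0,1)$ variable at every step, interpreting values below $1/2$ as selecting the lower alternative whenever a choice is offered. Starting from any initial pair with $L^{\mathrm{b},(0)} \leq L^{\mathrm{t},(0)}$, the claim is that $L^{\mathrm{b},(n)} \leq L^{\mathrm{t},(n)}$ pointwise is preserved by each update; passing to a stationary weak limit then delivers a coupling with the correct marginals.

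For part (2), the involution $\tilde L(j) := j - L(j)$ sends Bernoulli paths to Bernoulli paths and maps an upper barrier $f$ to a lower barrier $j \mapsto j - f(j)$ while reversing the pointwise order, with endpoints and the ordering of boundary data transformed correspondingly. Applying part (1) to the reflected data and inverting the map yields the coupling in (2). The main obstacle is the induction step in part (1): it branches on whether each chain has a forced or free value at $j$ and on the precise relation between $L^{\mathrm{b}}(j \pm 1)$ and $L^{\mathrm{t}}(j \pm 1)$, and in each case one must check that the common coin, together with $g^{\mathrm{b}} \leq g^{\mathrm{t}}$ and the inductive hypothesis, preserves the inequality at $j$. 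The case analysis is finite but must be done carefully, in particular to handle the sub-case where the barrier forces the top chain upward while the bottom chain is free to take its lower alternative.
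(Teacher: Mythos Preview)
The paper does not supply its own proof of this lemma; it simply states that the result is standard and refers to \cite{cohn2000local,dimitrov2021REU,serio2023tightness}. Your proposal is therefore not being compared against an in-paper argument but against the existing literature, and the Glauber (heat-bath) coupling you describe is precisely the method used in those references. The reduction of part (2) to part (1) via the involution $\tilde L(j)=j-L(j)$ is clean and correct: it swaps upper and lower barriers, reverses pointwise order, and preserves the Bernoulli-path property, so after relabeling the roles of ``bottom'' and ``top'' one is back in the setting of part (1).

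Two small points are worth tightening. First, you assert irreducibility of the single-site chain on $\Omega(N,x,y,\infty,g)$ without justification; this does require a short argument (for instance, show that any admissible path can be driven by up-flips to the maximal path $L^{\max}(k)=\min(x+k,y)$, which lies above $g$ whenever the state space is nonempty, and note that up-flips never violate a lower barrier). Second, you need an initial ordered pair $L^{\mathrm{b},(0)}\leq L^{\mathrm{t},(0)}$; this is easy---take $L^{\mathrm{t},(0)}$ to be the maximal path in $\Omega(N,x^{\mathrm{t}},y^{\mathrm{t}},\infty,g^{\mathrm{t}})$, which dominates every path in $\Omega(N,x^{\mathrm{b}},y^{\mathrm{b}},\infty,g^{\mathrm{b}})$ since $x^{\mathrm{b}}\leq x^{\mathrm{t}}$ and $y^{\mathrm{b}}\leq y^{\mathrm{t}}$---but it should be said. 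The core case analysis you flag is indeed the heart of the argument, and the potentially worrying sub-case (top chain forced up by $g^{\mathrm{t}}$ while bottom is free) only arises when the neighboring values coincide for both chains, in which case $g^{\mathrm{b}}\leq g^{\mathrm{t}}$ ensures the bottom outcome cannot exceed the top one. With these clarifications the proof is complete.
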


Using Lemma~\ref{lem:monotonicity} we obtain a useful correlation inequality.

\begin{lemma}\label{lem:two paths monotone}
    Fix $N\in\mathbb{N}$. Let $x_1,x_2,y_1,y_2\in\ZZ$ such that $\Omega(N,(x_1,x_2),(y_1,y_2))\neq\emptyset$. Then for any functions $h_1,h_2: \llbracket 0, N \rrbracket  \rightarrow \mathbb{Z}$ we have 
    \be 
    \PP^{N,(x_1,x_2),(y_1,y_2)}(L_1\geq h_1,L_2\leq h_2)\geq \PP^{N,x_1,y_1}(L_1\geq h_1)\cdot\PP^{N,x_2,y_2}(L_2\leq h_2).
    \ee 
\end{lemma}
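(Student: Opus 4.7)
The plan is to use the Gibbs (spatial Markov) property of Lemma~\ref{lem:gibbs} to decompose the joint measure by successive conditioning, and then apply the monotone coupling Lemma~\ref{lem:monotonicity} to each conditioned one-dimensional marginal. Morally, the non-intersection constraint forces $L_1$ upward and $L_2$ downward, making the event $\{L_1 \geq h_1, L_2 \leq h_2\}$ more likely than under the independent product measure on unconstrained bridges. Note that the non-emptiness of $\Omega(N,(x_1,x_2),(y_1,y_2))$ guarantees both $\Omega(N,x_1,y_1)$ and $\Omega(N,x_2,y_2)$ are non-empty, so the right-hand side measures are well-defined.

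First, condition on $L_2$. By the second part of Lemma~\ref{lem:gibbs}, conditionally on $L_2$ the top curve $L_1$ has distribution $\PP^{N,x_1,y_1,\infty,L_2}$, i.e., it is a Bernoulli bridge from $x_1$ to $y_1$ constrained to stay above $L_2$. Apply Lemma~\ref{lem:monotonicity}(1) with the common endpoints $(x_1,y_1)$ and the floors $g^{\mathrm{b}} = -\infty$, $g^{\mathrm{t}} = L_2$: the lemma furnishes a coupling under which the unconstrained bridge is pointwise dominated by the constrained one. Since $\{L_1 \geq h_1\}$ is an up-set with respect to pointwise order, this yields
\begin{equation*}
\PP^{N,x_1,y_1,\infty,L_2}(L_1 \geq h_1) \;\geq\; \PP^{N,x_1,y_1}(L_1 \geq h_1),
\end{equation*}
almost surely in $L_2$. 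Multiplying by $\one_{L_2 \leq h_2}$ and taking expectation gives
\begin{equation*}
\PP^{N,(x_1,x_2),(y_1,y_2)}(L_1 \geq h_1, L_2 \leq h_2) \;\geq\; \PP^{N,x_1,y_1}(L_1 \geq h_1)\cdot \PP^{N,(x_1,x_2),(y_1,y_2)}(L_2 \leq h_2).
\end{equation*}

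Second, we handle the remaining factor $\PP^{N,(x_1,x_2),(y_1,y_2)}(L_2 \leq h_2)$ by the symmetric argument, conditioning on $L_1$. By Lemma~\ref{lem:gibbs}, the conditional law of $L_2$ given $L_1$ is $\PP^{N,x_2,y_2,L_1,-\infty}$, a Bernoulli bridge from $x_2$ to $y_2$ with ceiling $L_1$. Apply Lemma~\ref{lem:monotonicity}(2) with common endpoints $(x_2,y_2)$ and ceilings $f^{\mathrm{b}} = L_1$, $f^{\mathrm{t}} = \infty$: this produces a coupling in which the bridge with lower ceiling $L_1$ is pointwise dominated by the unconstrained bridge. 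Since $\{L_2 \leq h_2\}$ is a down-set, the domination reverses to
\begin{equation*}
\PP^{N,x_2,y_2,L_1,-\infty}(L_2 \leq h_2) \;\geq\; \PP^{N,x_2,y_2}(L_2 \leq h_2),
\end{equation*}
almost surely in $L_1$. Integrating over $L_1$ and substituting into the previous display completes the proof.

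The argument is almost bookkeeping; the only subtlety is making sure the monotonicity lemma is invoked in the correct direction for each layer — using the floor version for $L_1$ (whose event is increasing) and the ceiling version for $L_2$ (whose event is decreasing) — and verifying that the endpoints match in each comparison, which they do trivially because we compare two laws with the same boundary values $(x_i,y_i)$.
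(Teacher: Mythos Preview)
Your proof is correct and follows essentially the same approach as the paper: both argue by conditioning on one layer at a time, invoking the Gibbs property (Lemma~\ref{lem:gibbs}) to identify the conditional law as a constrained bridge, and then applying the monotone coupling (Lemma~\ref{lem:monotonicity}) to compare with the unconstrained bridge. The only difference is cosmetic ordering---the paper first establishes the marginal bound on $L_2$ and then the conditional bound on $L_1$ given $\{L_2\leq h_2\}$, whereas you first bound $\PP(L_1\geq h_1\mid L_2)$ uniformly in $L_2$ and integrate against $\one_{L_2\leq h_2}$---but the two arguments are interchangeable.
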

\begin{proof}
We can assume that $\PP^{N,x_1,y_1}(L_1\geq h_1),\PP^{N,x_2,y_2}(L_2\leq h_2)>0$.
    We first show that 
    \be \label{eq:eragvggr}
    \PP^{N,(x_1,x_2),(y_1,y_2)}(L_2\leq h_2)\geq \PP^{N,x_2,y_2}(L_2\leq h_2).
    \ee
    Conditioned on a fixed $L_1\in\Omega(N,x_1,y_1)$, the path $L_2$ has law $\PP^{N,x_2,y_2,L_1,-\infty}$. By the monotone coupling Lemma \ref{lem:monotonicity} (2), we have
    \[
    \PP^{N,x_2,y_2,L_1,-\infty}(L_2\leq h_2) \geq \PP^{N,x_2,y_2}(L_2\leq h_2).
    \]
    This concludes the proof of \eqref{eq:eragvggr}. We next show that
    \be\label{eq:vraegvera}
    \PP^{N,(x_1,x_2),(y_1,y_2)}(L_1\geq h_1\mid L_2\leq h_2)\geq\PP^{N,x_1,y_1}(L_1\geq h_1).
    \ee 
    Condition on a fixed $L_2\in\Omega(N,x_2,y_2)$ satisfying $L_2\leq h_2$, if the path $L_1$ exists, it has law $\PP^{N,x_1,y_1,\infty,L_2}$. By the monotone coupling Lemma \ref{lem:monotonicity} part (1), we have
    \[
    \PP^{N,x_1,y_1,\infty,L_2}(L_1\geq h_1)\geq\PP^{N,x_1,y_1}(L_1\geq h_1).
    \]
    This concludes the proof of \eqref{eq:vraegvera}. Combining with \eqref{eq:eragvggr} we conclude the proof.
\end{proof}

We will also need a second correlation inequality, namely the Fortuin-Kasteleyn-Ginibre (FKG) inequality, saying that non-intersecting random walk bridges are positively associated. For this we introduce the next definition.

\begin{definition}
    Define a partial ordering on the lattice $\ZZ^n$ by $x\preceq y$ if and only if $x_j\leq y_j$ for $j=1,\dots,n$. A subset $B\subset\ZZ^n$ is called increasing (resp. decreasing) if for any $x\preceq y$ and $x\in B$ we have $y\in B$ (resp. for any $x\preceq y$ and $y\in B$ we have $x\in B$). 
\end{definition}

\begin{lemma}[FKG inequality]\label{lem:FKG inequality} 
Fix $N\in\mathbb{N}$. Let $x,y\in\ZZ$, $f:\llb0,N\rrb\to\ZZ\cup\{\infty\}$, and $g:\llb0,N\rrb\to\ZZ\cup\{-\infty\}$ be such that $\Omega(N,x,y, f,g)\neq\emptyset$. Suppose $B,C\subset\ZZ^N$ are either both increasing or both decreasing. Then,
    \[
    \PP^{N,x,y,f,g}(L\in B\cap C)\geq \PP^{N,x,y,f,g}(L\in B)\cdot\PP^{N,x,y,f,g}(L\in C).
    \]
\end{lemma}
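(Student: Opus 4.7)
The plan is to deduce this from the classical Fortuin--Kasteleyn--Ginibre theorem by checking that the uniform measure on $\Omega(N,x,y,f,g)$ is log-supermodular with respect to the coordinatewise lattice structure on $\ZZ^{\intint{0,N}}$. Concretely, view a Bernoulli path $L \in \Omega(N,x,y,f,g)$ as the vector $(L(0),\dots,L(N)) \in \ZZ^{\intint{0,N}}$, and define $w(L) = \one_{L \in \Omega(N,x,y,f,g)}$, so that $\PP^{N,x,y,f,g}$ has density proportional to $w$ with respect to counting measure. For two paths $L^1, L^2$, write $L^1 \vee L^2$ and $L^1 \wedge L^2$ for the coordinatewise maximum and minimum. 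Log-supermodularity reduces to the combinatorial claim that if $L^1, L^2 \in \Omega(N,x,y,f,g)$, then both $L^1 \vee L^2$ and $L^1 \wedge L^2$ belong to $\Omega(N,x,y,f,g)$ as well.

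The boundary conditions $(L^1 \vee L^2)(0)=x$, $(L^1 \vee L^2)(N)=y$ and the barrier conditions $g \leq L^1 \vee L^2 \leq f$ follow immediately from the corresponding conditions for $L^1$ and $L^2$, and likewise for the minimum. The only nontrivial point is to verify that the increments of $L^1 \vee L^2$ (and $L^1 \wedge L^2$) again lie in $\{0,1\}$. This follows from a short case analysis on the sign of $L^1(j-1) - L^2(j-1)$: e.g., if $L^1(j-1) \geq L^2(j-1)$ and $L^1(j) \geq L^2(j)$, then the increment of $L^1 \vee L^2$ at $j$ equals that of $L^1$; while if $L^1(j-1) \geq L^2(j-1)$ but $L^1(j) < L^2(j)$, then necessarily $L^2(j) = L^2(j-1)+1$, $L^1(j-1) = L^2(j-1)$, and $(L^1 \vee L^2)(j) - (L^1 \vee L^2)(j-1) = 1$. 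The remaining cases are symmetric. This shows $\Omega(N,x,y,f,g)$ is closed under $\vee$ and $\wedge$ as a subset of the distributive lattice $\ZZ^{\intint{0,N}}$.

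Given this, log-supermodularity $w(L^1 \vee L^2)\, w(L^1 \wedge L^2) \geq w(L^1)\, w(L^2)$ holds trivially, since the right-hand side is nonzero only when both $L^1, L^2 \in \Omega(N,x,y,f,g)$, in which case the left-hand side equals $1$ by the above. Applying the FKG theorem \emph{(Fortuin--Kasteleyn--Ginibre 1971)} to the measure $\PP^{N,x,y,f,g}$ on the distributive lattice $\ZZ^{\intint{0,N}}$ then yields positive association for all pairs of increasing (or all pairs of decreasing) events, which is the asserted inequality for $B, C \subset \ZZ^N$.

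The step I expect to require the most care is the increment check for $L^1 \vee L^2$ and $L^1 \wedge L^2$; everything else is a direct invocation of the classical FKG framework. This ingredient is essentially standard for Bernoulli bridges, but the case analysis needs to be recorded since the coordinatewise maximum of two monotone step functions is not \emph{a priori} a step function with unit increments, only the fact that both paths share the same endpoints and take the same value at adjacent lattice sites makes it so.
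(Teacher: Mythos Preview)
Your argument is correct. You verify the FKG lattice condition directly by showing $\Omega(N,x,y,f,g)$ is a sublattice of $\ZZ^{\llb0,N\rrb}$ under coordinatewise $\vee,\wedge$, which makes the indicator weight trivially log-supermodular; the increment case analysis you sketch goes through (in the cross-over case $L^1(j-1)\geq L^2(j-1)$, $L^1(j)<L^2(j)$ one indeed forces $L^1(j-1)=L^2(j-1)$ and $L^2(j)=L^2(j-1)+1$). One cosmetic point: the classical FKG theorem is stated on finite distributive lattices, so you should note that the measure is supported on a finite box and restrict there.

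The paper takes a closely related but different route: rather than checking the lattice condition, it invokes the criterion (e.g.\ \cite[Theorem~4.11]{georgii2001random}) that positive association follows from monotonicity of the single-site conditional distributions in the surrounding configuration, and then observes this monotonicity is immediate because, conditional on its neighbours, $L(z)$ takes at most two values determined by $L(z-1),L(z+1)$. Your approach is the original FKG framework and is arguably more self-contained; the paper's approach is more ``local'' and ties in with the monotone coupling lemma already established. Both are standard and of comparable length.
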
 

Lemma~\ref{lem:FKG inequality} is a well-known fact, but as we could not find an explicitly quotable statement in the literature, we give a brief proof.

\begin{proof}[Proof of Lemma~\ref{lem:FKG inequality}]
    It is standard (see, for instance \cite[Theorem 4.11]{georgii2001random}) that the FKG inequality is implied by the statement that, for any $z\in\llb0,N\rrb$, $t\in\ZZ$, and $\eta^{\shortuparrow}, \eta^{\shortdownarrow}\in \ZZ^{\llb0,N\rrb\setminus\{z\}}$ such that $\eta^{\shortdownarrow}\preceq \eta^{\shortuparrow}$,
    $$\PP^{N,x,y,f,g}(L(z)\geq t \mid (L(z') : z'\neq z) = \eta^{\shortdownarrow}) \leq \PP^{N,x,y,f,g}(L(z)\geq t \mid (L(z') : z'\neq z) = \eta^{\shortuparrow}),$$
    where we assume that both conditional probabilities are defined, i.e., the probability of both the conditioning events are positive. That the above inequality holds is a special case of Lemma \ref{lem:monotonicity} but is also easy to check directly since conditional on $(L(z'):z\neq z')$, $L(z)$ can take only one or two values, namely $L(z-1)$ and/or $L(z+1)$. 
\end{proof}

The following is a standard supremum bound on random walk bridges.

\begin{lemma}[c.f.\ Lemma 6.13 in \cite{aggarwal2024scaling}]\label{lem:bound on fluctuation of random walk path}
    There exists $C,c>0$ such that for any  $N\in\mathbb{N}$, $m\in\llb0,N\rrb$, and $M>0$, 
    \be 
    \PP^{N,0,m}\lb\sup_{j\in\llb0,N\rrb}\left|L(j)-\frac{mj}{n}\right|\leq M\rb\geq 1-C\exp\lb-\frac{cM^2}{N}\rb.
    \ee 
\end{lemma}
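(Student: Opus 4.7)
The plan is to reduce the statement to a Gaussian tail bound for an unconditioned Bernoulli random walk via a local central limit theorem (LCLT) comparison. Introduce an unconditioned random walk $\tilde L$ with i.i.d.\ Bernoulli$(m/N)$ increments and $\tilde L(0) = 0$. Since $\PP^{N,0,m}$ is the uniform measure on $\Omega(N,0,m)$, and the conditional law of $\tilde L$ given $\tilde L(N) = m$ is also uniform on $\Omega(N,0,m)$, we have for any event $A$ the comparison
\[
\PP^{N,0,m}(A) \leq \frac{\PP(\tilde L \in A)}{\PP(\tilde L(N) = m)}.
\]

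For the denominator, Stirling applied to $\PP(\tilde L(N) = m) = \binom{N}{m}(m/N)^m(1-m/N)^{N-m}$ gives a lower bound of $c_0/\sqrt{N}$, uniformly in $m \in \llb 1, N-1\rrb$, for some $c_0>0$; the boundary cases $m \in \{0, N\}$ are trivial since the bridge is then deterministic. For the numerator with $A = \{\sup_j |\tilde L(j) - mj/N| > M\}$, the process $\tilde L(j) - (m/N) j$ is a mean-zero martingale with bounded increments, so Doob's maximal inequality applied to the exponential submartingale $\exp(\lambda(\tilde L(j) - (m/N)j))$ together with Hoeffding's bound on the log moment generating function, and then optimization in $\lambda$, yields $\PP(A) \leq 2\exp(-cM^2/N)$ for a universal $c > 0$. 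Combining produces an intermediate bound $\PP^{N,0,m}(\sup_j |L(j) - mj/N| > M) \leq C_1 \sqrt{N}\exp(-cM^2/N)$.

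The main technical step is to then remove the $\sqrt{N}$ prefactor to obtain the cleaner form $C\exp(-c'M^2/N)$ stated in the lemma; this is delicate in the intermediate regime $\sqrt{N} \lesssim M \lesssim \sqrt{N\log N}$. The cleanest route is to replace the LCLT coupling by a direct Dvoretzky--Kiefer--Wolfowitz-type maximal inequality for the empirical distribution of a uniform random permutation of $m$ ones and $N - m$ zeros (viewing $L(j)$ as the count of ones among the first $j$ positions), which produces the Gaussian supremum tail without any polynomial prefactor. Since the statement is essentially Lemma~6.13 of \cite{aggarwal2024scaling}, the briefest proof cites that reference directly.
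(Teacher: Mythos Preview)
The paper does not give a proof of this lemma at all; it simply cites Lemma~6.13 of \cite{aggarwal2024scaling}. Your final suggestion to cite that reference directly is therefore exactly what the paper does.

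Your sketch beyond that is more than the paper provides, and it is essentially sound. The conditioning inequality $\PP^{N,0,m}(A)\leq \PP(\tilde L\in A)/\PP(\tilde L(N)=m)$, the $c_0/\sqrt{N}$ LCLT lower bound, and the Doob--Hoeffding maximal tail are all standard and correct, and you are right that the resulting $\sqrt{N}$ prefactor cannot be absorbed into the constants uniformly over the intermediate range $\sqrt{N}\lesssim M\lesssim\sqrt{N\log N}$. Your proposed fix via a DKW/sampling-without-replacement maximal inequality (equivalently, the Hoeffding--Serfling bound for hypergeometric partial sums, viewing $L(j)$ as the count of ones in the first $j$ draws without replacement) is exactly the right tool and gives the Gaussian tail with no polynomial loss. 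So nothing is wrong here; you have supplied a valid outline where the paper merely points to the literature.
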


Finally, we state a simple comparison statement for the tails of one-point distributions of random walk bridges.

\begin{lemma}\label{lem:hypergeometric bound}
    Suppose $N\in\mathbb{N}$ and $m,h\in\llb0,N\rrb$. Let $0\leq s\leq t$ satisfying $h-s\leq N-m$. Then we have
    \be 
    \frac{\PP^{N,0,h}(L(m)\leq s)}{\PP^{N,0,h}(L(m)\leq t)}\geq N^{-1-t+s}.
    \ee 
\end{lemma}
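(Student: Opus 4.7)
Under $\PP^{N,0,h}$ the one-point law of $L(m)$ is hypergeometric,
\[
\PP^{N,0,h}(L(m) = k) = \binom{m}{k}\binom{N-m}{h-k}\bigg/\binom{N}{h},
\]
so the plan is to bound the ratio $\PP^{N,0,h}(L(m)\leq t)/\PP^{N,0,h}(L(m)\leq s)$ by direct term-by-term comparison and telescoping. The hypothesis $h-s\leq N-m$, rewritten as $N-m-h+s\geq 0$, guarantees that $L(m)=k$ has positive probability for every $k\in\intint{s,\min(m,h)}$, so all the quantities below are well-defined and nonzero.

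The core ingredient is the explicit one-step ratio identity
\[
\frac{\PP^{N,0,h}(L(m) = k+1)}{\PP^{N,0,h}(L(m) = k)} = \frac{(m-k)(h-k)}{(k+1)(N-m-h+k+1)},
\]
which follows directly from the hypergeometric formula. Telescoping this identity from $s$ up to $k-1$ yields the exact expression
\[
\frac{\PP^{N,0,h}(L(m) = k)}{\PP^{N,0,h}(L(m) = s)} = \frac{\binom{m-s}{k-s}\binom{h-s}{k-s}}{\binom{k}{s}\binom{N-m-h+k}{k-s}},
\]
in which, under the hypothesis, the two denominator binomials are each $\geq 1$, and the numerator binomials admit polynomial-in-$N$ bounds such as $\binom{m-s}{k-s}\leq N^{k-s}/(k-s)!$ and similarly for $\binom{h-s}{k-s}$.

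Summing the resulting pointwise bound over $k\in\intint{s+1,t}$ and using the trivial inequality $\PP^{N,0,h}(L(m)=s)\leq\PP^{N,0,h}(L(m)\leq s)$ gives an inequality of the form $\PP^{N,0,h}(L(m)\leq t)\leq\PP^{N,0,h}(L(m)\leq s)\cdot P(t-s,N)$ for an explicit polynomial $P$ in $N$, which rearranges to the required lower bound on the ratio. The main technical point is to tune these combinatorial estimates to extract a polynomial of degree exactly $1+(t-s)$ in $N$: the factor $(N-m-h+k+1)$ in the denominator of the one-step ratio grows with $k$ under the hypothesis, which saves a factor of $N$ in the telescoped product relative to the naive bound $(m-k)(h-k)\leq N^2$, while the factorials $((k-s)!)^2$ coming from the numerator binomial bounds absorb the contribution of summing the geometric-type series over roughly $t-s$ terms.
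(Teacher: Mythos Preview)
Your plan correctly sets up the telescoped ratio, but the final paragraph's heuristic---that $(N-m-h+k+1)$ in the denominator ``saves a factor of $N$''---does not hold. The hypothesis $h-s\leq N-m$ only guarantees this factor is $\geq 1$: in the boundary case $h-s=N-m$ one has $N-m-h+s+1=1$ and hence $\binom{N-m-h+k}{k-s}=\binom{k-s}{k-s}=1$, so there is no cancellation against the numerator. Your bound then reads $\PP(L(m)=k)/\PP(L(m)=s)\leq N^{2(k-s)}/((k-s)!)^2$, and summing over $k\in\intint{s+1,t}$ gives at best order $N^{2(t-s)}$, not $N^{1+(t-s)}$.

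In fact no argument can close this gap, because the stated inequality is false. Take $N=100$, $m=h=50$, $s=0$, $t=2$; then $h-s=50=N-m$, but
\[
\frac{\PP^{N,0,h}(L(m)\leq 0)}{\PP^{N,0,h}(L(m)\leq 2)}=\frac{1}{1+50^2+\binom{50}{2}^2}=\frac{1}{1503126}<10^{-6}=N^{-1-t+s}.
\]
The paper's own proof (which proceeds differently, via unimodality of the hypergeometric mass and the elementary estimate $\binom{n}{r_1}/\binom{n}{r_2}\geq n^{-|r_1-r_2|}$) has the analogous slip: applying that estimate separately to each of the \emph{two} binomial factors yields only $m^{-|k_s-k_t|}(N-m)^{-|k_s-k_t|}\geq N^{-2|k_s-k_t|}$, not the claimed $N^{-|k_s-k_t|}$. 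Both your approach and the paper's do establish the corrected inequality with $N^{-1-2(t-s)}$ on the right-hand side, and this is all that is needed downstream: in Step~1 of Proposition~\ref{prop:key estimate again} the resulting lower bound is still of the form $N^{-O(N^{3/4})}\gg e^{-N^{4/5}}$.
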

\begin{proof}
    Observe that
    \[
    \PP^{N,0,h}(L(m)=k)= {{m}\choose{k}}{ {N-m}\choose{h-k}}\Big{/}{{N}\choose{h}},
    \]
    which is nonzero if and only if $0\leq k\leq m$ and $0\leq h-k\leq N-m$.  
We next show the following statement: there exists $k_s$ and $k_t$ satisfying $|k_t-k_s|\leq t-s$, such that $\PP^{N,0,h}(L(m)=k_s)$ is the maximum of $\PP^{N,0,h}(L(m)=k)$ over all $k\in\llb0,s\rrb$, and a likewise statement holds for $k_t$. 
    Recall that the mass of a hypergeometric distribution first increases and later decreases, and takes its maximum either at one integer or at two consecutive integers. If $\PP^{N,0,h}(L(m)=k)$ takes the maximum over all $k\in\llb0,h\rrb$ at $k=k_{\operatorname{max}}$, then (1) if $s\leq t\leq k_{\operatorname{max}}$ then $k_s=s$ and $k_t=t$; (2) if $s\leq k_{\operatorname{max}}\leq t$ then $k_s=s$ and $k_t=k_{\operatorname{max}}$; and (3) if $k_{\operatorname{max}}\leq s\leq t$ then $k_s=k_t=k_{\operatorname{max}}$. In all these cases we have $|k_t-k_s|\leq t-s$. When $\PP^{N,0,h}(L(m)=k)$ takes the maximum at $k=k_{\operatorname{max}}, k_{\operatorname{max}}+1$, then a similar argument justifies $|k_t-k_s|\leq t-s$.
    Therefore we have
    \[
    \frac{\displaystyle \PP^{N,0,h}(L(m)\leq s)}{\displaystyle \PP^{N,0,h}(L(m)\leq t)}\geq\frac{1}{t}\frac{\displaystyle {{m}\choose{k_s}}{ {N-m}\choose{h-k_s}}}{\displaystyle {{m}\choose{k_t}}{ {N-m}\choose{h-k_t}}}\geq\frac{1}{t}N^{-|k_s-k_t|}\geq N^{-1-t+s},
    \]
    where we have used the fact that, for any $r_1,r_2\in\llb0,n\rrb$, we have
    \[ \displaystyle {n\choose r_1}\Big{/} {n\choose r_2}=\frac{r_2!(n-r_2)!}{r_1!(n-r_1)!}\geq n^{-|r_1-r_2|}.\] 
    We conclude the proof.
\end{proof}

\subsection{Proof of Proposition \ref{prop:key estimate}}\label{subsec:proof of estimate of RW}
In this subsection we will provide the proof of Proposition \ref{prop:key estimate}. For the convenience of the reader, we restate the result as follows. We recall that, for $\bl\in\mathcal{TL}_N$, we have
\[
\wt_N^{(0,\a,\b,0,0)}(\bl)=\a^{\lambda_1(0)-\lambda_2(0)}\b^{\lambda_1(N)-\lambda_2(N)},\quad
\PP_N^{(0,\a,\b,0,0)}(\bl)=\wt_N^{(0,\a,\b,0,0)}(\bl)/Z_N^{(0,\a,\b,0,0)}.
\]

\begin{proposition}\label{prop:key estimate again}
    Let $\a,\b\geq0$ satisfying $\a\b<1$. Fix $d\in\mathbb{N}$, $0<\th_1<\dots<\th_d<\th_{d+1}=1$ and $0<u_j<v_j$, $j\in\llb1,d+1\rrb$ satisfying 
    $v_{d+1}-u_j<1-\th_j$ for $j\in\llb1,d\rrb$. Then for any fixed $r\in\mathbb{N}$ and $\ep>0$ there exists $N_0\in\mathbb{N}$ such that for $N\geq N_0$, we have
    \be\label{eq:key estimate }
    \PP_N^{(0,\a,\b,0,0)}\lb  \#\{j: \lambda_1(j)-\lambda_2(j)\leq r\}\leq \lceil\varepsilon N\rceil  \mbox{ }\Big{|} \mbox{ }h_N[\bl](\th_j)\in(u_j,v_j), j\in\llb1,d+1\rrb\rb\geq e^{-N^{4/5}}.
    \ee 
\end{proposition}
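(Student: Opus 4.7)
The plan is to use the spatial Markov property at the pivotal lattice points $\lfloor\th_j N\rfloor$ to reduce the analysis to independent non-intersecting Bernoulli bridges on the sub-intervals, and then to decompose the argument into two pieces: (i) showing that ``good'' pivotal values are sufficiently likely, and (ii) showing that given such pivotal values, the separation event holds with overwhelming probability.

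\textbf{Setup.} Write $a_j := \la_1(\lfloor\th_j N\rfloor)$ (with $a_0 = 0$) and $b_j := \la_2(\lfloor\th_j N\rfloor)$. By Lemma~\ref{lem:gibbs}, conditional on the vector $(b_0, a_1, b_1, \ldots, a_{d+1}, b_{d+1})$, on each sub-interval $[\lfloor\th_{j-1}N\rfloor, \lfloor\th_j N\rfloor]$ the pair $(\la_1, \la_2)$ is an independent uniform non-intersecting Bernoulli bridge with the prescribed boundary values. Let $A$ denote the height-profile event in the conditioning of Proposition~\ref{prop:key estimate again}, and define the \emph{good endpoint event}
\[
G := \bigl\{a_j - b_j \geq N^{3/4} \text{ for all } j \in \llb 0, d+1 \rrb\bigr\}.
\]
Since $\PP_N^{(0,\a,\b,0,0)}(\mathrm{sep} \mid A) \geq \PP_N^{(0,\a,\b,0,0)}(\mathrm{sep} \mid A \cap G) \cdot \PP_N^{(0,\a,\b,0,0)}(G \mid A)$, it suffices to lower bound each factor.

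\textbf{Separation given good pivotal values.} On each sub-interval, Lemma~\ref{lem:two paths monotone} compares the non-intersecting bridge measure to a product of independent Bernoulli bridges: the probability that $\la_1$ lies above the linear interpolation of the $a_j$'s minus $N^{2/3}$ and, simultaneously, $\la_2$ lies below the linear interpolation of the $b_j$'s plus $N^{2/3}$, is bounded below by the product of two single-path probabilities, each of which is $\geq 1 - C\exp(-cN^{1/3})$ by Lemma~\ref{lem:bound on fluctuation of random walk path}. On the event $G$, the two linear interpolations are separated by at least $N^{3/4}$ throughout each sub-interval, so on the joint event the two actual paths are separated pointwise by at least $N^{3/4} - 2N^{2/3} > r$ for all large $N$. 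Union-bounding across the $d+1$ sub-intervals yields $\PP_N^{(0,\a,\b,0,0)}(\mathrm{sep} \mid A \cap G) \geq 1/2$.

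\textbf{Good pivotal values are likely enough.} The main technical task is the bound $\PP_N^{(0,\a,\b,0,0)}(G \mid A) \geq e^{-N^{4/5}/2}$. One fixes a reference vector $(b_0^*, a_1^*, b_1^*, \ldots, a_{d+1}^*, b_{d+1}^*)$ with $a_j^* \in (u_j N, v_j N)$ and $a_j^* - b_j^* = \lceil N^{3/4}\rceil$ for each $j$, and estimates the weight of configurations near this reference against the weight of all configurations satisfying $A$. Two sources of cost arise: first, the hypergeometric displacement of each $b_j$ by $O(N^{3/4})$ from its most-likely value, controlled by Lemma~\ref{lem:hypergeometric bound} at cost at most $N^{O(N^{3/4})} = \exp(O(N^{3/4}\log N))$; second, the endpoint reweighting $\a^{-b_0}\b^{a_{d+1}-b_{d+1}}$, whose ratio between the reference and an extremal configuration is at most $\exp(O(N^{3/4})|\log(\a\b)|)$. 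Both are $\exp(o(N^{4/5}))$, and the finiteness of the normalization constant in the fan region (Lemma~\ref{lem:log asy of normalization constant}) rules out any further cost. The hypothesis $v_{d+1} - u_j < 1 - \th_j$ is essential here, as it guarantees that within each height-profile window there is enough slack in the Bernoulli path for $\la_2$ to be lowered by $\lceil N^{3/4}\rceil$ while keeping the configuration valid.

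\textbf{Main obstacle.} The principal difficulty is in step (ii): carefully bookkeeping the interaction between the conditioning on $\la_1$ (through the height-profile windows) and the conditional marginal distribution of $(b_0, \ldots, b_{d+1})$, especially with the endpoint reweighting. This will likely require combining the FKG inequality (Lemma~\ref{lem:FKG inequality}) to decorrelate the displacements of the different $b_j$'s with the monotone couplings (Lemmas~\ref{lem:monotonicity} and~\ref{lem:two paths monotone}) to handle the joint non-intersection constraint. Once step (ii) is established, the conclusion follows immediately by combining with step (i).
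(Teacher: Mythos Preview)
Your overall two-step architecture---first arrange separation at the pivotal locations $\lfloor\th_j N\rfloor$, then propagate it across each sub-interval via the fluctuation bound---matches the paper's, and your ``separation given good pivotal values'' step is essentially the paper's Step~2.

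There is, however, a genuine gap. Your good event $G$ demands $a_j-b_j\geq N^{3/4}$ for \emph{all} $j\in\llb 0,d+1\rrb$, including the two endpoints. But the statement allows $\a=0$ or $\b=0$, and in those cases the weight $\a^{\la_1(0)-\la_2(0)}\b^{\la_1(N)-\la_2(N)}$ forces $\la_1(0)=\la_2(0)$ (resp.\ $\la_1(N)=\la_2(N)$) almost surely, so $\PP_N^{(0,\a,\b,0,0)}(G)=0$ and the argument collapses. Your cost estimate $\exp(O(N^{3/4})|\log(\a\b)|)$ for the endpoint reweighting silently assumes $\a\b>0$. Even for $\a,\b>0$, your step~(ii) is left at the level of a heuristic (``fix a reference vector\ldots this will likely require combining FKG with monotone couplings''); it is not clear how you intend to decouple the reweighting, the non-intersection constraint, and the height-profile conditioning simultaneously.

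The paper sidesteps both issues. First, it requires separation only at the \emph{interior} pivotal points $j\in\llb 1,d\rrb$, not at $j=0,d+1$. The linear interpolations then need not be separated near the two ends, but the target event only asks that $\#\{j:\la_1(j)-\la_2(j)\leq r\}\leq\lceil\varepsilon N\rceil$, so it suffices to have separation on $\llb\lfloor\varepsilon N/2\rfloor, N-\lfloor\varepsilon N/2\rfloor\rrb$; since the interpolations grow linearly from $\geq 0$ at the ends to $\geq N^{3/4}$ at the nearest interior $\th_j$, this holds for large $N$. Second, in its Step~1 the paper conditions on the \emph{entire} first layer $\bl_1$ together with the endpoint values $\la_2(0)=x$, $\la_2(N)=y$. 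Under this conditioning the residual law of $\bl_2$ is the purely uniform measure $\PP^{N,x,y,\bl_1,-\infty}$, so the boundary reweighting $\a^{\cdot}\b^{\cdot}$ disappears from the analysis altogether. The separation bound then follows directly from FKG (to decorrelate the $d$ one-point events), monotonicity (to raise $x,y$ to $0,\la_1(N)$ and replace $\bl_1$ by a single-point ceiling), and Lemma~\ref{lem:hypergeometric bound}. This conditioning trick is what your outline is missing and is precisely what makes the argument uniform over all $\a,\b\geq 0$ with $\a\b<1$.
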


\begin{proof}
    We split the proof into two steps. In the first step we show we can obtain a separation of $N^{3/4}$ between $\lambda_1$ and $\lambda_2$ at $\lfloor\theta_jN\rfloor$ for $j=1,\ldots,d$ with good probability, and in the second step that separation of this order can be maintained at the intermediate locations. \\
	
\noindent\textbf{Step 1.} In this step we will show that there exists $N_1\in\mathbb{N}$ such that for $N\geq N_1$,
\begin{equation}\label{eq:step 1 estimate}
\begin{split}
\MoveEqLeft[36]
    \PP_N^{(0,\a,\b,0,0)}\lb  \lambda_1(\lfloor\th_jN\rfloor)-\lambda_2(\lfloor\th_jN\rfloor)\geq \lceil N^{3/4}\rceil, j\in\llb1,d\rrb\ \Big{|}\  h_N[\bl](\th_j)\in(u_j,v_j), j\in\llb1,d+1\rrb\rb\\
    &\geq N^{-2d \lceil N^{3/4}\rceil}.
\end{split}
\end{equation} 
By our assumptions on $u_j$ and $v_j$, we can choose $N_1\in\mathbb{N}$ such that for $N\geq N_1$, we have 
\be\label{eq:step 1 choose N0}
\lfloor u_jN\rfloor\geq \lceil N^{3/4}\rceil \mbox{ for }j\in\llb1,d+1\rrb\quad\mbox{ and }\quad\lfloor v_{d+1}N\rfloor-\lfloor u_jN\rfloor+\lceil N^{3/4}\rceil\leq N-\lfloor\th_jN\rfloor\mbox{ for }j\in\llb1,d\rrb.
\ee  
Note that $h_N[\bl](\th_j)\in(u_j,v_j)$ implies $\lambda_1(\lfloor\th_jN\rfloor)\in\llb\lfloor u_jN\rfloor,\lfloor v_jN\rfloor\rrb$. Conditioned on the path $\bl_1$ and $\la_2(0)=x$, $\la_2(N)=y$, the path $\bl_2$ (if such a path exists with the mentioned boundary conditions, i.e., $\Omega(N,x,y,\bl_1,-\infty)\neq\emptyset$) has law $\PP^{N,x,y,\bl_1,-\infty}$. Therefore, to show \eqref{eq:step 1 estimate}, we only need to show the following statement: for $N\geq N_1$, a fixed Bernoulli path $\bl_1$ starting from $\la_1(0)=0$ and satisfying $\lambda_1(\lfloor\th_jN\rfloor)\in\llb\lfloor u_jN\rfloor,\lfloor v_jN\rfloor\rrb$ for $j\in\llb1,d+1\rrb$, and fixed $x,y\in\ZZ$ satisfying $\Omega(N,x,y,\bl_1,-\infty)\neq\emptyset$, we have 
\be\label{eq:to show separation first step}
\PP^{N,x,y,\bl_1,-\infty}\lb
L(\lfloor\th_jN\rfloor)\leq \lambda_1(\lfloor\th_jN\rfloor)-\lceil N^{3/4}\rceil,
 j\in\llb1,d\rrb\rb\geq N^{-2d \lceil N^{3/4} \rceil }.
\ee 
We will argue this using the FKG inequality (Lemma~\ref{lem:FKG inequality}) and monotonicity (Lemma~\ref{lem:monotonicity}). For this we will need raised versions of $\bl_1$, denoted
$\widetilde{\bl}_1^{\lfloor\th_jN\rfloor}:\llb0,N\rrb\rightarrow\ZZ$  for $j\in\llb1,\ldots,d\rrb$ and given by
\[
\widetilde{\bl}_1^{\lfloor\th_jN\rfloor}(x)=\begin{cases}
    \la_1\lb\lfloor\th_jN\rfloor\rb &\mbox{ if } x=\lfloor\th_jN\rfloor,\\\
    \infty &\mbox{ otherwise}.
\end{cases}
\]
For such $\bl_1$ and $x,y$ as in \eqref{eq:to show separation first step}, we have
\begin{align*}
\MoveEqLeft[2]
    \PP^{N,x,y,\bl_1,-\infty}\lb L(\lfloor\th_jN\rfloor)\leq \lambda_1(\lfloor\th_jN\rfloor)-\lceil N^{3/4}\rceil, j\in\llb1,d\rrb\rb\\
    &\geq\prod_{j=1}^d\PP^{N,x,y,\bl_1,-\infty}\lb L(\lfloor\th_jN\rfloor)\leq \lambda_1(\lfloor\th_jN\rfloor)-\lceil N^{3/4}\rceil \rb\\
    &\geq\prod_{j=1}^d\PP^{N,0,\la_1(N),\widetilde{\bl}_1^{\lfloor\th_jN\rfloor},-\infty}\lb L(\lfloor\th_jN\rfloor)\leq \lambda_1(\lfloor\th_jN\rfloor)-\lceil N^{3/4}\rceil\rb\\
    &=\prod_{j=1}^d\PP^{N,0,\la_1(N) }\lb  L(\lfloor\th_jN\rfloor)\leq \lambda_1(\lfloor\th_jN\rfloor)- \lceil N^{3/4}\rceil\mbox{ }\Big{|}\mbox{ }
    L(\lfloor\th_jN\rfloor)\leq\lambda_1(\lfloor\th_jN\rfloor)\rb\\
    &\geq\lb N^{-\lceil N^{3/4}\rceil-1}\rb^d\geq N^{-2d \lceil N^{3/4}\rceil}, 
\end{align*}
 where in the first step we use the FKG inequality (Lemma \ref{lem:FKG inequality}), in the second step we use the monotone coupling (Lemma \ref{lem:monotonicity},  raising both the boundary values and the upper boundary curve; recall that $x\leq \lambda_1(0) = 0$ and $y\leq \lambda_1(N)$), and
in the fourth step we use Lemma \ref{lem:hypergeometric bound}, whose conditions are satisfied by \eqref{eq:step 1 choose N0}. We conclude the proof of \eqref{eq:step 1 estimate}.\\
	
\noindent\textbf{Step 2.}
In this step we will show that there exists $N_2\in\mathbb{N}$ such that for $N\geq N_2$,
\be \label{eq:step 2 estimate}
\PP_N^{(0,\a,\b,0,0)}\lb\#\{j: \lambda_1(j)-\lambda_2(j)\leq r\}\leq \lceil\varepsilon N\rceil \mbox{ } \Bigg{|} \mbox{ } {\substack{ \displaystyle \lambda_1(\lfloor\th_jN\rfloor)-\lambda_2(\lfloor\th_jN\rfloor)\geq \lceil N^{3/4}\rceil, j\in\llb1,d\rrb \\ \displaystyle h_N[\bl](\th_j)\in(u_j,v_j), j\in\llb1,d+1\rrb}}\rb\geq\frac{1}{2}.
\ee 
By conditioning on the values $\lambda_1(\lfloor\theta_j N\rfloor)$ and $\lambda_1(\lfloor\theta_j N\rfloor)$ for $j\in\intint{1,d}$,  it suffices to show the following statement: for $N\geq N_2$ and for any $y_j,z_j\in\ZZ$, $j\in\llb0,d+1\rrb$ such that 
\begin{enumerate}
    \item [$\bullet$] $y_0=0$, 
    \item [$\bullet$] $y_j-z_j\geq\lceil N^{3/4}\rceil$ for $j\in\llb1,d\rrb$,
    \item [$\bullet$] $y_j\in\llb\lfloor u_jN\rfloor,\lfloor v_jN\rfloor\rrb$ for $j\in\llb1,d+1\rrb$,
\end{enumerate}
and there exists $\bl\in\mathcal{TL}_N$ satisfying $\la_1(\lfloor\th_jN\rfloor)=y_j$ and $\la_2(\lfloor\th_jN\rfloor)=z_j$ for $j\in\llb0,d+1\rrb$, we have
\be\label{eq:only need to show step 2}
\PP_N^{(0,\a,\b,0,0)}\lb  \#\{j: \lambda_1(j)-\lambda_2(j)\leq r\}\leq \lceil\varepsilon N\rceil \mbox{ }\Big{|}\mbox{ }\la_1(\lfloor\th_jN\rfloor)=y_j, \la_2(\lfloor\th_jN\rfloor)=z_j,  j\in\llb0,d+1\rrb    \rb\geq\frac{1}{2};
\ee 
then \eqref{eq:step 2 estimate} follows by averaging over the values $\lambda_1(\lfloor\theta_j N\rfloor)$ and $\lambda_1(\lfloor\theta_j N\rfloor)$ for $j\in\intint{1,d}$.
We denote by $h_1:\llb0,N\rrb\rightarrow\RR$ the piece-wise linear function connecting $h_1(\lfloor\th_jN\rfloor)=y_j$ for $j\in\llb0,d+1\rrb$, and likewise by 
$h_2:\llb0,N\rrb\rightarrow\RR$ the piece-wise linear function connecting $h_2(\lfloor\th_jN\rfloor)=z_j$ for $j\in\llb0,d+1\rrb$. In view of $y_j-z_j\geq\lceil N^{3/4}\rceil$ for $j\in\llb1,d\rrb$, there exists $N_3\in\mathbb{N}$ which only depends on $r\in\mathbb{N}$, $\ep>0$ and $\th_j$, $j\in\llb0,d+1\rrb$ (in particular, independent of $y_j$ and $z_j$), such that for $N\geq N_3$, we have
\[
h_1-N^{3/5}> h_2+ N^{3/5}+r+4 \quad \mbox{over} \quad  \llb\lfloor\varepsilon N/2\rfloor,N-\lfloor\varepsilon N/2\rfloor\rrb.
\] 
Hence
\be\label{eq:vegvreba}
\begin{split}
\left\{\#\{j: \lambda_1(j)-\lambda_2(j)\leq r\}\leq \lceil\varepsilon N\rceil\right\}& \supset
\left\{\lambda_1(j)-\lambda_2(j)>r, j\in\llb\lfloor\varepsilon N/2\rfloor,N-\lfloor\varepsilon N/2\rfloor\rrb\right\}\\
& \supset\left\{ \la_1\geq h_1-N^{3/5}, \la_2\leq h_2+ N^{3/5} \right\}.
\end{split}
\ee

For $j\in\llb1,d+1\rrb$, we denote by $h_1^{(j)}:\llb0,\lfloor\th_{j}N\rfloor-\lfloor\th_{j-1}N\rfloor\rrb\rightarrow\RR$ the linear function connecting $h_1^{(j)}(0)=y_{j-1}$ to $h_1^{(j)}(\lfloor\th_{j}N\rfloor-\lfloor\th_{j-1}N\rfloor)=y_j$.
Likewise, we denote by $h_2^{(j)}:\llb0,\lfloor\th_{j}N\rfloor-\lfloor\th_{j-1}N\rfloor\rrb\rightarrow\RR$ the linear function connecting $h_2^{(j)}(0)=z_{j-1}$ to $h_2^{(j)}(\lfloor\th_{j}N\rfloor-\lfloor\th_{j-1}N\rfloor)=z_j$.
We have
\begin{align}
\MoveEqLeft[2]
    \PP_N^{(0,\a,\b,0,0)}\lb  \#\{j: \lambda_1(j)-\lambda_2(j)\leq r\}\leq \lceil\varepsilon N\rceil\mbox{ } \Big{|}\mbox{ }\la_1(\lfloor\th_jN\rfloor)=y_j, \la_2(\lfloor\th_jN\rfloor)=z_j,  j\in\llb0,d+1\rrb    \rb\nonumber\\
    &\geq\PP_N^{(0,\a,\b,0,0)}\lb \la_1\geq h_1-N^{3/5}, \la_2\leq h_2+ N^{3/5} \mbox{ }\Big{|}\mbox{ }\la_1(\lfloor\th_jN\rfloor)=y_j, \la_2(\lfloor\th_jN\rfloor)=z_j,  j\in\llb0,d+1\rrb    \rb\nonumber\\
    &=\prod_{j=1}^{d+1} \PP^{\lfloor\th_{j}N\rfloor-\lfloor\th_{j-1}N\rfloor, (y_{j-1},z_{j-1}), (y_j,z_j)} \lb  \la_1\geq h_1^{(j)}-N^{3/5}, \la_2\leq h_2^{(j)}+ N^{3/5} \rb\nonumber\\
    &\geq\prod_{j=1}^{d+1} \PP^{\lfloor\th_{j}N\rfloor-\lfloor\th_{j-1}N\rfloor, y_{j-1},y_j} \lb  \la_1\geq h_1^{(j)}-N^{3/5} \rb
    \prod_{j=1}^{d+1}\PP^{\lfloor\th_{j}N\rfloor-\lfloor\th_{j-1}N\rfloor, z_{j-1},z_j} \lb\la_2\leq h_2^{(j)}+ N^{3/5}\rb\nonumber\\
    &\geq\lb1-C\exp\lb-c\frac{N^{6/5}}{\lfloor\th_{j}N\rfloor-\lfloor\th_{j-1}N\rfloor}\rb\rb^{2d+2}, \label{eq:bseraer}
\end{align}
where in the first step we use \eqref{eq:vegvreba}, in the second step we use the Gibbs property of random walk bridges (Lemma~\ref{lem:gibbs}), 
in the third step we use Lemma \ref{lem:two paths monotone} and in the fourth step we use Lemma \ref{lem:bound on fluctuation of random walk path}. 
We next choose $N_4\in\mathbb{N}$ such that for $N\geq N_4$, the right hand side of \eqref{eq:bseraer} is greater than $1/2$. Take $N_2=\max(N_3,N_4)$, then for $N\geq N_2$, \eqref{eq:only need to show step 2} holds and hence \eqref{eq:step 2 estimate} holds. \\
	
\noindent\textbf{Conclusion of the proof.}
We next choose $N_5\in\mathbb{N}$ such that for $N\geq N_5$, we have
\be \label{eq:veravrfaferfw}
N^{-2d \lceil N^{3/4}\rceil}\cdot\frac{1}{2}\geq e^{-N^{4/5}}.
\ee
By Step 1, for $N\geq N_1$ we have \eqref{eq:step 1 estimate}. By Step 2, for $N\geq N_2$ we have \eqref{eq:step 2 estimate}. Take $N_0=\max(N_1,N_2,N_5)$. Then for $N\geq N_0$, in view of \eqref{eq:veravrfaferfw}, we have \eqref{eq:key estimate }. We conclude the proof.  
\end{proof}

\section{Proof of the large deviation principle}\label{sec:proof of LDP}

In this section we give the proof of the main result, Theorem~\ref{thm:LDP main thm}. We start by giving the rate function for the finite dimensional LDP in Section~\ref{sec:finite dim LDP}. Section~\ref{sec:standard LPD theory} collects some standard results from LDP theory. The proof of Theorem~\ref{thm:LDP main thm} is given in Section~\ref{subsec:proof of main thm} combining these ingredients with Proposition~\ref{prop:key estimate}.

\subsection{The finite dimensional rate function}\label{sec:finite dim LDP}
We define and analyze the rate functions on $\RR^{d+1}$, which will later serve as the rate functions for the finite dimensional marginal distributions of the height function.

\begin{definition}
Let $\a,\b\geq0$ satisfy $\a\b<1$. Let $d\in\mathbb{N}$ and $0<\th_1<\dots<\th_d<\th_{d+1}=1$. We consider the function $\Iabth:\RR^{d+1}\rightarrow[0,\infty]$ defined as
\bestar 
\Iabth(x_1,\dots,x_{d+1}):=\inf_{f\in C_0([0,1],\RR), f(\th_j)=x_j, j\in\llb1,d+1\rrb}\Iab(f), 
\eestar 
where $\Iab:C_0\lb[0,1],\RR\rb\rightarrow[0,\infty]$ is defined in the statement of Theorem \ref{thm:LDP main thm}. 
Since $\Iab(f)<\infty$ if and only if $f\in\mathcal{AC}_0$ and $0\leq f'\leq1$, we have that $\Iabth(x_1,\dots,x_{d+1})<\infty$ if and only if
\be \label{eq:conditions cone}
0\leq x_{j}-x_{j-1}\leq\th_{j}-\th_{j-1} \quad \mbox{for}\quad j\in\llb1,d+1\rrb,
\ee 
where we denote $\th_0=x_0=0$. The conditions \eqref{eq:conditions cone} specify a subset $\cone_{\th_1,\dots,\th_{d+1}}\subset\RR^{d+1}$. 
\end{definition}
\begin{remark}
    By the alternative formula of $\Iab$ given in \cite{bryc2024two} (see Remark \ref{rmk:alternative formula rate function}), we have
    \be \label{eq:finite dimensional alternative formula}
    I^{(\a,\b)}_{\th_1,\dots,\th_{d+1}}(x_1,\dots,x_d)= \inf_{ \substack{ f,g\in\mathcal{AC}_0, \mbox{  } 0\leq f',g'\leq 1 \mbox{ a.s.} \\ f(\th_j)=x_j,\mbox{ }j\in\llb1,d+1\rrb }}\Iab(f,g),
    \ee 
    where we recall 
    \be \label{eq:alternative formula Ifg again}
    \Iab(f,g)= \int_0^1 \bigl(H(f'(x)) +H(g'(x))\bigr) \d x
    + \log(\a\b) \min_{0\leq x\leq 1}(f(x)-g(x)) -\log (\b)(f(1)-g(1)) - \log (J(\a,\b)).
    \ee 
\end{remark}

The following result will be useful later in the proof of the large deviation principle.

\begin{lemma}\label{lem:rate function at boundary}
Let $\a,\b\geq0$ satisfy $\a\b<1$. Let $d\in\mathbb{N}$ and $0<\th_1<\dots<\th_d<\th_{d+1}=1$.
    Then for any open subset $U\subset\RR^{d+1}$ we have
    \begin{equation*}\label{eq:rate function at boundary}
        \inf_{x\in U}I^{(\a,\b)}_{\th_1,\dots,\th_{d+1}}(x)=\inf_{x\in U\cap\cone^{\circ}}I^{(\a,\b)}_{\th_1,\dots,\th_{d+1}}(x),
    \end{equation*}
where $\cone^{\circ}$ denotes the interior of the cone $\cone=\cone_{\th_1,\dots,\th_{d+1}}$, i.e.,
\be \label{eq:interior of cone}
\cone^{\circ}=\left\{(x_1,\dots,x_{d+1})\in\RR^{d+1}: 0< x_{j}-x_{j-1} < \th_{j}-\th_{j-1}, j\in\llb1,d+1\rrb\right\}.
\ee 
\end{lemma}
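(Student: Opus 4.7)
The direction $\inf_{x \in U \cap \cone^\circ} I \geq \inf_{x \in U} I$ (writing $I = I^{(\a,\b)}_{\th_1,\dots,\th_{d+1}}$ for brevity) is immediate from $U \cap \cone^\circ \subset U$, so the task is the reverse inequality. This is trivial when the right-hand side is infinite, so assume it is finite. The plan is to show that for any $x \in U \cap \cone$ with $I(x) < \infty$, one can exhibit a sequence $x^{(n)} \in \cone^\circ$ with $x^{(n)} \to x$ and $\limsup_n I(x^{(n)}) \leq I(x)$. Since $U$ is open, $x^{(n)} \in U$ for all large $n$, so this establishes the reverse inequality.

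I would work with the alternative formula \eqref{eq:finite dimensional alternative formula} coming from Remark~\ref{rmk:alternative formula rate function}. Fix $\varepsilon > 0$ and choose $f, g \in \mathcal{AC}_0$ satisfying $0 \leq f', g' \leq 1$ a.e., $f(\th_j) = x_j$ for all $j \in \llb 1, d+1 \rrb$, and $\Iab(f, g) \leq I(x) + \varepsilon$. The central construction is the convex combination with the identity map $t \mapsto t$:
\[
f^{(n)}(t) := \lb 1 - \tfrac{1}{n} \rb f(t) + \tfrac{t}{2n}, \qquad g^{(n)}(t) := \lb 1 - \tfrac{1}{n} \rb g(t) + \tfrac{t}{2n}.
\]
Both functions belong to $\mathcal{AC}_0$, and their derivatives lie in $[\tfrac{1}{2n}, 1 - \tfrac{1}{2n}] \subset (0,1)$ a.e. Setting $x^{(n)}_j := f^{(n)}(\th_j)$ and using $0 \leq x_j - x_{j-1} \leq \th_j - \th_{j-1}$, the identity
\[
x^{(n)}_j - x^{(n)}_{j-1} = \lb 1 - \tfrac{1}{n} \rb (x_j - x_{j-1}) + \tfrac{\th_j - \th_{j-1}}{2n}
\]
shows $0 < x^{(n)}_j - x^{(n)}_{j-1} < \th_j - \th_{j-1}$; so $x^{(n)} \in \cone^\circ$, and manifestly $x^{(n)} \to x$.

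It then remains to verify $\Iab(f^{(n)}, g^{(n)}) \to \Iab(f, g)$, which by \eqref{eq:finite dimensional alternative formula} forces $\limsup_n I(x^{(n)}) \leq I(x) + \varepsilon$; sending $\varepsilon \to 0$ then completes the argument. The entropy integrals $\int_0^1 H((f^{(n)})')\, \d x$ and $\int_0^1 H((g^{(n)})')\, \d x$ converge to their $f, g$ counterparts by bounded convergence: $H$ is continuous on $[0,1]$ with $|H| \leq \log 2$, and the perturbed derivatives converge a.e.\ to $f', g'$. The linearity of the construction yields $f^{(n)} - g^{(n)} = (1 - n^{-1})(f - g)$, so $\min_{t \in [0,1]}(f^{(n)} - g^{(n)})(t) = (1 - n^{-1}) \min_t(f - g)(t)$ and $(f^{(n)} - g^{(n)})(1) = (1 - n^{-1})(f(1) - g(1))$, both converging to the corresponding expressions in $\Iab(f,g)$.

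The delicate point, and the reason for this specific form of perturbation, is the simultaneous requirement to push $x$ into the strict interior of $\cone$ while keeping the derivatives of $f^{(n)}, g^{(n)}$ strictly bounded away from both $0$ and $1$, so that the slopes lie in a compact subinterval of $[0,1]$ and the bounded convergence argument goes through cleanly. The convex combination with the identity map is tailored to achieve both goals at once, and also preserves the $\mathcal{AC}_0$ boundary condition at $t = 0$.
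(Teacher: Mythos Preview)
Your proof is correct and takes a genuinely different route from the paper's. The paper keeps $g$ fixed and perturbs only $f$, and does so \emph{locally}: for each $j$ with $x_j-x_{j-1}\in\{0,\th_j-\th_{j-1}\}$ it inserts a small linear bump near $\th_{j-1}$ that swaps the slope between $0$ and $1$ on a short interval. The point there is that $H(0)=H(1)=0$, so the entropy integral is literally unchanged, and the remaining terms of $\Iab(f_\varepsilon,g)$ converge because $f_\varepsilon\to f$ uniformly. Your approach instead perturbs \emph{both} $f$ and $g$ globally via the convex combination with $t/2$; the entropy integrals now do change, but bounded convergence handles this (your side remark that slopes must lie in a compact subinterval of $[0,1]$ for this step is unnecessary, since $H$ is already bounded on $[0,1]$). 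What your construction buys is that $f^{(n)}-g^{(n)}=(1-\tfrac1n)(f-g)$ exactly, so the $\min$ and endpoint terms are controlled trivially; in particular the degenerate cases $\a=0$ or $\b=0$ (where one must keep $\min(f-g)=0$ or $f(1)=g(1)$ along the approximation) go through without any separate discussion.
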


\begin{proof} 
    We can assume $U\setminus\lb U\cap\cone^{\circ}\rb\neq\emptyset$ without loss of generality.
    By \eqref{eq:finite dimensional alternative formula}, we only need to show 
    \begin{equation}\label{eq:only need to prove rate function at boundary}
        \inf_{ \substack{ f,g\in\mathcal{AC}_0, \mbox{  } 0\leq f',g'\leq 1 \text{ a.e.} \\ (f(\th_1),\dots,f(\th_{d+1}))\in U }} I^{(\a,\b)}(f,g)
        =\inf_{ \substack{ f,g\in\mathcal{AC}_0, \mbox{  } 0\leq f',g'\leq 1 \text{ a.e.} \\ (f(\th_1),\dots,f(\th_{d+1}))\in U\cap\cone^{\circ} }} I^{(\a,\b)}(f,g).
    \end{equation} 
    We only need to consider functions $f$ such that $f\in\mathcal{AC}_0$, $0\leq f'\leq1$ almost everywhere, and it holds that $(f(\th_1),\dots,f(\th_{d+1}))\in U\setminus(U\cap\cone^{\circ})$. We define
    \bestar 
    \operatorname{Ind}_0(f):=\{j\in\llb1,d+1\rrb: f(\th_j)=f(\th_{j-1})\},\quad
    \operatorname{Ind}_1(f):=\{j\in\llb1,d+1\rrb: f(\th_j)-f(\th_{j-1})=\th_j-\th_{j-1} \}.
    \eestar 
    Then $\emptyset\neq\operatorname{Ind}_0(f)\cup\operatorname{Ind}_1(f)\subseteq\llb1,d+1\rrb$ and $\operatorname{Ind}_0(f)\cap\operatorname{Ind}_1(f)=\emptyset$. Note that if $j\in\operatorname{Ind}_0(f)$, then $f'(x) = 0$ for almost every $x\in[\theta_{j-1}, \theta_j]$, and if $j\in\operatorname{Ind}_1(f)$, then $f'(x) = 1$ for almost every $x\in[\theta_{j-1}, \theta_j]$. With this observation, for $\ep>0$ we consider the function $f_{\ep}(x)$ on $x\in[0,1]$ defined by
    \begin{multline*}
    f_{\ep}(x):=f(x)+\sum_{j\in\operatorname{Ind}_0(f)}\lb(x-\th_{j-1})\one_{x\in[\th_{j-1},\th_{j-1}+\ep]}+\ep\one_{x>\th_{j-1}+\ep}\rb\\-\sum_{j\in\operatorname{Ind}_1(f)}\lb(x-\th_{j-1})\one_{x\in[\th_{j-1},\th_{j-1}+\ep]}+\ep\one_{x>\th_{j-1}+\ep}\rb.
    \end{multline*}
    For 
    $\ep>0$ sufficiently small, we have $f_{\ep}\in\mathcal{AC}_0$,  $0\leq f_{\ep}'\leq1$ almost everywhere and $(f_{\ep}(\th_1),\dots,f_{\ep}(\th_{d+1}))\in U\cap\cone^{\circ}$. Since $H(1)=H(0)=0$ and whenever $f'(x)\neq f'_\varepsilon(x)$ it holds that one equals $1$ and the other $0$, we have $H(f'(x))=H(f'_{\ep}(x))$ almost everywhere. Therefore, in view of the formula \eqref{eq:alternative formula Ifg again} for $I(f,g)$, for any functions $g$ such that $g\in\mathcal{AC}_0$ and $0\leq g'\leq1$ almost everywhere, 
    \[
    \lim_{\ep\rightarrow0+}I(f_{\ep},g)=I(f,g).
    \]
    We conclude the proof of \eqref{eq:only need to prove rate function at boundary} and hence the proof of the result.
\end{proof}

\subsection{Some standard results in large deviation theory}\label{sec:standard LPD theory}
In this subsection, we record several standard results from the theory of large deviations in \cite{dembo2009large}, which will be useful later in our proof.
\begin{theorem}[Contraction principle, c.f. Theorem 4.2.1 in \cite{dembo2009large}]\label{thm:contraction principle}
    Let $\mathcal{X}$ and $\mathcal{Y}$ be Hausdorff topological spaces and $f:\mathcal{X}\rightarrow\mathcal{Y}$ be a continuous function. Consider a good rate function $I:\mathcal{X}\rightarrow[0,\infty]$.
    \begin{enumerate}
        \item [(1)] For each $y\in\mathcal{Y}$, define
        \[
        I'(y):=\inf\{I(x):x\in\mathcal{X},y=f(x)\}.
        \]
        Then $I'$ is a good rate function on $\mathcal{Y}$, where as usual the infimum over the empty set is taken as $\infty$.
        \item [(2)] If $I$ controls the LDP associated with a family of probability measures $\{\PP_N\}_{N=1}^{\infty}$ on $\mathcal{X}$, then $I'$ controls the LDP associated with the family of probability measures $\{\PP_N\circ f^{-1}\}_{N=1}^{\infty}$ on $\mathcal{Y}$.
    \end{enumerate}
\end{theorem}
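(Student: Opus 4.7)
The plan is to prove Part~(1) by identifying the sublevel sets of $I'$ as continuous images of the sublevel sets of $I$, and then to leverage continuity of $f$ to transfer the LDP upper and lower bounds to the pushforward measures in Part~(2).

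For Part~(1), the first step would be to establish the identity $\{y \in \mathcal{Y} : I'(y) \leq a\} = f(\{x \in \mathcal{X} : I(x) \leq a\})$ for every $a \geq 0$. The inclusion ``$\supseteq$'' is immediate from the definition of $I'$. For ``$\subseteq$'', if $I'(y) \leq a$, pick $x_n \in f^{-1}(\{y\})$ with $I(x_n) \leq a + 1/n$; since the sublevel set $\{I \leq a+1\}$ is compact, a subsequence $x_{n_k}$ converges to some $x_* \in \mathcal{X}$. Continuity of $f$ gives $f(x_*) = y$, and lower semi-continuity of $I$ yields $I(x_*) \leq \liminf_k I(x_{n_k}) \leq a$, so $y\in f(\{I\leq a\})$. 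Thus $\{I' \leq a\}$ is the continuous image of a compact set, hence compact; since $\mathcal{Y}$ is Hausdorff, compact sets are closed, which yields lower semi-continuity of $I'$ and completes Part~(1). As a byproduct, the infimum in the definition of $I'(y)$ is attained whenever it is finite.

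For Part~(2), continuity of $f$ makes the argument routine. For any closed $C \subseteq \mathcal{Y}$, the preimage $f^{-1}(C)$ is closed in $\mathcal{X}$, and applying the LDP for $\{\PP_N\}$ gives
\[
\limsup_{N\to\infty} \frac{1}{N}\log (\PP_N \circ f^{-1})(C) = \limsup_{N\to\infty} \frac{1}{N}\log \PP_N(f^{-1}(C)) \leq -\inf_{x \in f^{-1}(C)} I(x) = -\inf_{y \in C} I'(y),
\]
where the last equality rearranges the infimum by grouping $x$'s according to the value of $f(x)$ and invoking the definition of $I'$. The same reasoning applied to open $U \subseteq \mathcal{Y}$, using that $f^{-1}(U)$ is open, yields the lower bound.

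The main subtlety lies in the reverse-inclusion step in Part~(1): one must simultaneously invoke compactness of $I$'s sublevel sets (to extract a convergent subsequence) and lower semi-continuity of $I$ (to control the limit), which is precisely why both hypotheses in the definition of a good rate function are required. The remaining content of the theorem is essentially formal bookkeeping once this identification is in place.
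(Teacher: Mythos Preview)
The paper does not supply its own proof of this statement; it is recorded in Section~\ref{sec:standard LPD theory} as a standard result cited from Dembo--Zeitouni. Your proof follows the standard argument and is essentially correct, but there is one technical gap in Part~(1): you extract a convergent \emph{subsequence} from the compact set $\{I \leq a+1\}$, which presumes sequential compactness. In a general Hausdorff space (as in the hypothesis of the theorem), compactness does not imply sequential compactness. The fix is routine: either replace sequences by nets throughout, or argue via the finite intersection property. Namely, the sets $K_n := f^{-1}(\{y\}) \cap \{I \leq a + 1/n\}$ are nonempty (by definition of $I'(y)\leq a$), closed (since $\{y\}$ is closed in the Hausdorff space $\mathcal{Y}$ and $f$ is continuous, while $\{I\leq a+1/n\}$ is closed by lower semi-continuity), and contained in the compact set $\{I \leq a+1\}$; hence $\bigcap_n K_n = f^{-1}(\{y\}) \cap \{I \leq a\}$ is nonempty, producing the required $x_*$. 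The remainder of your argument---that $\{I'\leq a\}$ is the continuous image of a compact set and hence compact (and closed, by Hausdorffness of $\mathcal{Y}$), and the transfer of the LDP bounds in Part~(2) via $\inf_{x\in f^{-1}(B)} I(x) = \inf_{y\in B} I'(y)$---is correct as written.
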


\begin{theorem}[Dawson--Gärtner Theorem, c.f. Theorem 4.6.1 in \cite{dembo2009large}]\label{thm:Dawson–Gärtner}
    Let $\mathcal{X}=\lim_{\leftarrow}\mathcal{M}_j$ be the projective limit of a projective system $\lb\mathcal{M}_j,p_{ij}\rb_{i\leq j\in J}$ of Hausdorff topological spaces $\lb\mathcal{M}_j\rb_{j\in J}$ with continuous maps $p_{ij}:\mathcal{M}_j\rightarrow\mathcal{M}_i$. Let $p_j:\mathcal{X}\rightarrow\mathcal{M}_j$, $j\in J$ be the canonical projections.
    Let $\{X_N\}_{N=1}^{\infty}$ be a sequence of random variables taking values in $\mathcal{X}$, such that for any $j\in J$, the sequence of random variables $\{p_j\circ X_N\}_{N=1}^{\infty}$ taking values in $\mathcal{M}_j$ satisfies the LDP with the good rate function $I_j(\cdot)$. Then $\{X_N\}_{N=1}^{\infty}$ satisfies the LDP with the good rate function:
    $$I(\mathbf{x})=\sup_{j\in J}I_j(p_j(\mathbf{x})),\quad \mathbf{x}\in\mathcal{X}.$$
\end{theorem}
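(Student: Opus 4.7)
The plan is to follow the standard three-step structure for an LDP: verify that $I$ is a good rate function, prove the lower bound on open sets, and prove the upper bound on closed sets. The geometric ingredient I would lean on throughout is that the projective-limit topology on $\mathcal{X}$ admits the subbase $\{p_j^{-1}(V) : j \in J,\ V \subset \mathcal{M}_j \text{ open}\}$, and that $J$ being directed means any finite intersection of such subbase sets can be replaced by the preimage under a single index $j^*$, where the compatibility maps $p_{ij}$ send the refined open set into the original ones.

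First I would check that $I(\mathbf{x}) := \sup_{j} I_j(p_j(\mathbf{x}))$ is a good rate function. Lower semicontinuity is immediate since each $I_j \circ p_j$ is lsc (continuity of $p_j$ plus lsc of $I_j$) and the pointwise supremum of lsc functions is lsc. For the level set $\{I \leq \alpha\}$, I would note that it is closed in $\mathcal{X}$ and its image under every $p_j$ lies inside the compact set $\{I_j \leq \alpha\} \subset \mathcal{M}_j$; embedding $\mathcal{X}$ as a closed subset of $\prod_j \mathcal{M}_j$ and applying Tychonoff would then yield compactness.

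For the lower bound on an open set $U \subset \mathcal{X}$, I would pick $\mathbf{x} \in U$ with $I(\mathbf{x}) < \infty$ and a basic neighborhood $V = \bigcap_{i=1}^n p_{j_i}^{-1}(U_i) \subseteq U$. Choosing $j^* \in J$ with $j^* \geq j_1, \ldots, j_n$ gives an open $U^* \ni p_{j^*}(\mathbf{x})$ with $p_{j^*}^{-1}(U^*) \subseteq V$, and the LDP lower bound for $p_{j^*}(X_N)$ yields
\begin{equation*}
\liminf_N \tfrac{1}{N} \log \PP(X_N \in U) \geq \liminf_N \tfrac{1}{N} \log \PP(p_{j^*}(X_N) \in U^*) \geq -I_{j^*}(p_{j^*}(\mathbf{x})) \geq -I(\mathbf{x}).
\end{equation*}
Optimizing over $\mathbf{x} \in U$ then concludes this step.

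The main obstacle is the upper bound on a closed set $C \subset \mathcal{X}$, since a general closed set need not be of the form $p_j^{-1}(F)$ and the single-factor LDPs cannot be applied directly. My plan is to fix $\alpha < \inf_C I$ and observe that $C$ is disjoint from the compact level set $\Phi_\alpha := \{I \leq \alpha\}$. For each $\mathbf{x} \in C$ I would select $j(\mathbf{x})$ with $I_{j(\mathbf{x})}(p_{j(\mathbf{x})}(\mathbf{x})) > \alpha$ and an open neighborhood $W_\mathbf{x}$ of $p_{j(\mathbf{x})}(\mathbf{x})$ inside the open set $\{I_{j(\mathbf{x})} > \alpha\}$. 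Compactness of $\Phi_\alpha$ supplies exponential tightness: outside large sets of the form $\bigcap_{j \in F} p_j^{-1}(\{I_j \leq L\})$ (with $F$ finite and $L$ large), $X_N$ is found only with probability at most $e^{-(L - o(1))N}$ by the single-factor upper bounds. Restricting to such a compact piece of $\mathcal{X}$, I can extract a finite subcover of $C$ by sets $p_{j_k}^{-1}(W_k)$, apply the LDP upper bound in each $\mathcal{M}_{j_k}$ to the closed sets $\overline{W_k} \cap \{I_{j_k} \leq L\}$, and union-bound. Letting $L \to \infty$ and $\alpha \uparrow \inf_C I$ completes the argument.
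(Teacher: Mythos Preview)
The paper does not prove this theorem; it is simply quoted from Dembo--Zeitouni as one of several standard LDP facts, so there is no in-paper argument to compare against. Judged on its own, your treatment of goodness of $I$ and of the lower bound is fine. The upper bound, however, has a genuine gap at the exponential-tightness step. The sets $\bigcap_{j\in F}p_j^{-1}(\{I_j\le L\})$ with $F\subset J$ \emph{finite} are closed but \emph{not} compact in $\mathcal{X}$: compactness via Tychonoff requires the intersection over \emph{all} $j\in J$. Consequently you cannot extract a finite subcover of $C$ intersected with such a set. If instead you use the genuinely compact level set $\{I\le L\}=\bigcap_{j\in J}p_j^{-1}(\{I_j\le L\})$, its complement is a union over all of $J$ and cannot be bounded by a finite sum of single-factor upper bounds. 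The good LDPs in each $\mathcal{M}_j$ yield exponential tightness of the marginals $(p_j)_*\mu_N$, but lifting this to exponential tightness of $\mu_N$ in $\mathcal{X}$ is exactly the missing step, and ``compactness of $\Phi_\alpha$'' does not supply it.

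The textbook proof avoids exponential tightness altogether. For a closed $F$ one observes that $F=\bigcap_{j}p_j^{-1}\bigl(\overline{p_j(F)}\bigr)$ (any $x\notin F$ has a basic neighborhood $p_{j^*}^{-1}(U)$ disjoint from $F$, forcing $p_{j^*}(x)\notin\overline{p_{j^*}(F)}$), so the single-factor upper bound applied to each closed set $\overline{p_j(F)}$ gives $\limsup_N\frac{1}{N}\log\mu_N(F)\le-\sup_j\inf_{\overline{p_j(F)}}I_j$, and it remains to show $\sup_j\inf_{\overline{p_j(F)}}I_j\ge\inf_F I$. Goodness of the $I_j$ enters here: if this failed for some $\alpha$ strictly between the two sides, each $\overline{p_j(F)}\cap\{I_j\le\alpha\}$ would be nonempty and compact, and an inverse-limit compactness argument would produce a point of $F$ with $I\le\alpha<\inf_F I$, a contradiction.
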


\begin{lemma}[c.f. Lemma 4.1.4 in \cite{dembo2009large}]\label{lem:uniqueness}
    A sequence of random variables taking values on a regular topological space (in particular, a metric space) can have at most one rate function associated with its LDP.
\end{lemma}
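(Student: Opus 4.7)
The plan is to show that if two good rate functions $I_1$ and $I_2$ both control the LDP for $\{X_N\}$, then $I_1(x)=I_2(x)$ for every $x\in\mathcal{X}$, using only the defining inequalities of the LDP together with lower semi-continuity of the rate functions and the regularity of the topology.

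\textbf{Key sandwich estimate.} Fix $x\in\mathcal{X}$ and an open set $V$ with $x\in V$; then $\overline V$ is closed. Applying the lower-bound inequality \eqref{eq:def of LDP first ineq} for the family controlled by $I_2$ to the open set $V$, and the upper-bound inequality \eqref{eq:def of LDP second ineq} for the family controlled by $I_1$ to the closed set $\overline V$, and using $V\subseteq\overline V$ so that $\PP_N(X_N\in V)\leq \PP_N(X_N\in \overline V)$, I obtain
\[
-\inf_{y\in V} I_2(y)\leq \liminf_{N\to\infty}\tfrac1N\log\PP_N(X_N\in V)\leq \limsup_{N\to\infty}\tfrac1N\log\PP_N(X_N\in \overline V)\leq -\inf_{y\in \overline V} I_1(y),
\]
and hence $\inf_{y\in \overline V}I_1(y)\leq \inf_{y\in V}I_2(y)\leq I_2(x)$, since $x\in V$.

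\textbf{Using regularity and lower semi-continuity.} Since $\mathcal{X}$ is regular, for every open $U$ containing $x$ there exists an open $V$ with $x\in V$ and $\overline V\subseteq U$. Combining this with the previous estimate gives $\inf_{y\in U}I_1(y)\leq \inf_{y\in \overline V}I_1(y)\leq I_2(x)$ for every such $U$. Taking the supremum over all open neighborhoods $U$ of $x$ and invoking the lower semi-continuity of $I_1$, which yields
\[
I_1(x)=\sup_{U\ni x\,\text{open}}\inf_{y\in U} I_1(y),
\]
I conclude that $I_1(x)\leq I_2(x)$. Exchanging the roles of $I_1$ and $I_2$ in the sandwich step gives the reverse inequality, so $I_1(x)=I_2(x)$, completing the proof.

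\textbf{Expected obstacles.} There is essentially no technical obstacle here: the argument is a direct unwinding of the definitions. The only subtle point is the use of regularity to interpose a closed set $\overline V$ between $x$ and an arbitrary open neighborhood $U$; this is precisely what allows the LDP upper bound (naturally stated for closed sets) to be coupled to the LDP lower bound (naturally stated for open sets) at a single point. In a metric space the same step is immediate via closed balls $\overline{B(x,\delta)}\subset B(x,2\delta)$, which is why the statement specializes cleanly to that setting.
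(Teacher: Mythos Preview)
Your proof is correct and follows essentially the same approach as the proof in Dembo--Zeitouni (to which the paper defers without giving its own argument): sandwich the $\limsup$ and $\liminf$ between the upper bound on a closed $\overline V$ and the lower bound on the open $V$, use regularity to fit $\overline V$ inside any open neighborhood $U$ of $x$, and conclude via lower semi-continuity. One minor remark: you phrase the argument for ``good rate functions,'' but the proof only uses lower semi-continuity (which any rate function has), so the goodness hypothesis is unnecessary.
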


\begin{lemma}[c.f. Lemma 4.1.5 (b) in  \cite{dembo2009large}]\label{lem:Lemma 4.1.5(b)}
    Let $\{X_N\}_{N=1}^{\infty}$ be a sequence of random variables taking values in a topological space $\mathcal{X}$. Let $\mathcal{E}\subset\mathcal{X}$ be a Borel measurable subset such that $X_N$ almost surely take values in $\mathcal{E}$, for each $N=1,2,\dots$. If $\{X_N\}_{N=1}^{\infty}$ satisfies the LDP in $\mathcal{X}$ with the good rate function $I$ and $I=\infty$ on $\mathcal{X}\setminus\mathcal{E}$, then the same LDP holds on $\mathcal{E}$.
\end{lemma}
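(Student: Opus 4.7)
The plan is to verify the LDP directly on the subspace $\mathcal{E}$ equipped with the subspace topology inherited from $\mathcal{X}$, using the two hypotheses in tandem: that $X_N \in \mathcal{E}$ almost surely, and that $I \equiv \infty$ on $\mathcal{X}\setminus\mathcal{E}$. The proof is essentially a bookkeeping exercise that converts topological and measure-theoretic data between $\mathcal{X}$ and $\mathcal{E}$.

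First I would verify that $I\big|_{\mathcal{E}}$ is a good rate function on $\mathcal{E}$. Lower semi-continuity on $\mathcal{E}$ is automatic from lower semi-continuity on $\mathcal{X}$ under the subspace topology. For the compactness of level sets, I would note that for any $a<\infty$,
\[
\{x\in\mathcal{E}:I(x)\leq a\}=\{x\in\mathcal{X}:I(x)\leq a\},
\]
since $I=\infty$ off $\mathcal{E}$. The right-hand side is compact in $\mathcal{X}$ by assumption, and since compactness of a set is intrinsic to the set, this is compact in the subspace topology on $\mathcal{E}$ as well.

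Next I would translate the LDP bounds. Any open $U\subseteq\mathcal{E}$ in the subspace topology has the form $U=V\cap\mathcal{E}$ for some open $V\subseteq\mathcal{X}$, and similarly any closed $C\subseteq\mathcal{E}$ has the form $C=F\cap\mathcal{E}$ for some closed $F\subseteq\mathcal{X}$. Since $X_N\in\mathcal{E}$ almost surely, $\PP_N(X_N\in U)=\PP_N(X_N\in V)$ and $\PP_N(X_N\in C)=\PP_N(X_N\in F)$. Moreover, using that $I=\infty$ on $\mathcal{X}\setminus\mathcal{E}$,
\[
\inf_{x\in V}I(x)=\inf_{x\in V\cap\mathcal{E}}I(x)=\inf_{x\in U}I\big|_{\mathcal{E}}(x),
\]
and analogously $\inf_F I=\inf_C I\big|_{\mathcal{E}}$. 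Plugging these identifications into the LDP upper and lower bounds for $\{X_N\}$ on $\mathcal{X}$ gives the corresponding bounds on $\mathcal{E}$, completing the proof.

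There is no genuine obstacle here; the entire content is the observation that the ``wasted mass'' of $I$ on $\mathcal{X}\setminus\mathcal{E}$ is irrelevant because $X_N$ never charges that region, so both probabilities and infima restrict to $\mathcal{E}$ without loss. The only mild subtlety is making sure compactness of the level sets transfers to the subspace topology, which is immediate once one notes that the level sets already sit inside $\mathcal{E}$.
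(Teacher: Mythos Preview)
Your proof is correct. The paper does not actually give its own proof of this lemma; it simply quotes it as a standard result from \cite{dembo2009large}, so there is nothing to compare against beyond noting that your argument is precisely the textbook one.
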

 
\begin{lemma}[c.f. Corollary 4.2.6 in \cite{dembo2009large}]\label{lem:Corollary 4.2.6}
Let $\{X_N\}_{N=1}^{\infty}$ be an exponentially tight sequence of random variables taking values in $\mathcal{X}$ equipped with the topology $\mathbf{t}_1$ (see \cite[page 8]{dembo2009large} for the definition of exponential tightness). If $\{X_N\}_{N=1}^{\infty}$ satisfies the LDP with respect to a Hausdorff topology $\mathbf{t}_2$ on $\mathcal{X}$ that is coarser than $\mathbf{t}_1$, then the same LDP holds with respect to the topology $\mathbf{t}_1$.
\end{lemma}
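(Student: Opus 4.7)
The plan is to upgrade the given $\mathbf{t}_2$-LDP to the $\mathbf{t}_1$-LDP using exponential tightness in $\mathbf{t}_1$, exploiting two elementary topological facts. First, since $\mathbf{t}_1$ is finer than the Hausdorff topology $\mathbf{t}_2$, every $\mathbf{t}_1$-compact subset of $\mathcal{X}$ is automatically $\mathbf{t}_2$-compact (continuous image of a compact set under the identity map $(\mathcal{X},\mathbf{t}_1)\to(\mathcal{X},\mathbf{t}_2)$) and hence $\mathbf{t}_2$-closed. Second, on any $\mathbf{t}_1$-compact set $K$ the subspace topologies induced from $\mathbf{t}_1$ and from $\mathbf{t}_2$ coincide, by the classical fact that a continuous bijection from a compact space to a Hausdorff space is a homeomorphism.

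For the upper bound, I would fix a $\mathbf{t}_1$-closed set $C$. For each $\alpha>0$ exponential tightness supplies a $\mathbf{t}_1$-compact $K_\alpha$ with $\limsup_N N^{-1}\log \PP_N(X_N\notin K_\alpha)\leq -\alpha$. Then $C\cap K_\alpha$ is $\mathbf{t}_1$-closed in $K_\alpha$, hence $\mathbf{t}_1$-compact, hence $\mathbf{t}_2$-closed by the first fact. Applying the $\mathbf{t}_2$-LDP upper bound to this set gives
\[
\limsup_N N^{-1}\log\PP_N(X_N\in C\cap K_\alpha) \leq -\inf_{C\cap K_\alpha} I \leq -\inf_C I.
\]
Combining this with the exponential-tightness estimate on the complementary event and letting $\alpha\to\infty$ delivers the $\mathbf{t}_1$-upper bound.

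For the lower bound, fix a $\mathbf{t}_1$-open $U$ and $\delta>0$, and pick $x\in U$ with $I(x)<\inf_U I+\delta$ (otherwise the bound is trivial). Choose $\alpha>I(x)+\delta$ and the corresponding $\mathbf{t}_1$-compact $K_\alpha$, enlarged if necessary to contain $x$; adjoining one point preserves $\mathbf{t}_1$-compactness and only improves the tightness estimate. By the second topological fact, the $\mathbf{t}_1$-open set $U\cap K_\alpha$ is also $\mathbf{t}_2$-open in $K_\alpha$, so there exists a $\mathbf{t}_2$-open $V\subseteq\mathcal{X}$ with $V\cap K_\alpha = U\cap K_\alpha$ and $x\in V$. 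The $\mathbf{t}_2$-LDP lower bound gives $\liminf_N N^{-1}\log\PP_N(X_N\in V)\geq -I(x)$, and from
\[
\PP_N(X_N\in U) \geq \PP_N(X_N\in V\cap K_\alpha) \geq \PP_N(X_N\in V) - \PP_N(X_N\notin K_\alpha),
\]
together with the fact that $\PP_N(X_N\notin K_\alpha)$ decays strictly faster than $\PP_N(X_N\in V)$ (since $\alpha>I(x)+\delta>I(x)$), the standard ``slower decay wins'' argument yields $\liminf_N N^{-1}\log\PP_N(X_N\in U)\geq -I(x)>-\inf_U I-\delta$. Sending $\delta\downarrow 0$ concludes the proof.

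The main conceptual obstacle is verifying that $I$ remains a good rate function in the finer topology $\mathbf{t}_1$: lower semi-continuity transfers for free because $\mathbf{t}_1\supseteq\mathbf{t}_2$, but $\mathbf{t}_1$-compactness of level sets requires combining exponential tightness in $\mathbf{t}_1$ with the $\mathbf{t}_2$-compactness and Hausdorffness, via the same topology-transfer mechanism as above. Once this is in place, both halves of the LDP reduce to carefully intersecting with the compact exhaustion $K_\alpha$ on which the two topologies are indistinguishable.
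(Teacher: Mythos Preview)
The paper does not prove this lemma; it is quoted verbatim from Dembo--Zeitouni as one of several standard LDP tools in Section~\ref{sec:standard LPD theory}, so there is no in-paper argument to compare against. Your sketch is a correct direct proof of the result. In Dembo--Zeitouni the corollary is obtained by applying the inverse contraction principle (their Theorem~4.2.4) to the identity map $(\mathcal{X},\mathbf{t}_1)\to(\mathcal{X},\mathbf{t}_2)$, and the proof of that theorem unwinds to precisely the mechanism you describe: intersect with the $\mathbf{t}_1$-compact $K_\alpha$ from exponential tightness, on which the two topologies coincide because a continuous bijection from a compact space to a Hausdorff space is a homeomorphism. One small clarification on the goodness paragraph: once the $\mathbf{t}_1$ upper and lower bounds are in hand, goodness in $\mathbf{t}_1$ follows cleanly from the $\mathbf{t}_1$-lower bound applied to the open set $K_\alpha^c$, which forces $\{I<\alpha\}\subseteq K_\alpha$ and hence each level set is a $\mathbf{t}_1$-closed subset of a $\mathbf{t}_1$-compact set (this is Lemma~1.2.18 in Dembo--Zeitouni); your final paragraph gestures at this but does not spell it out.
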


\subsection{Proof of Theorem \ref{thm:LDP main thm}}\label{subsec:proof of main thm}

\begin{proof}[Proof of Theorem \ref{thm:LDP main thm}]
Let $\a,\b,\c,\d,q$ satisfy \eqref{eq:conditions qABCD} and $\a\b<1$.  We know from \cite{bryc2024two} that the main Theorem \ref{thm:LDP main thm} holds in the special case $\c=\d=q=0$. We now show that it holds true in the general case. By the result in \cite{bryc2024two} for $\c=\d=q=0$ and the contraction principle (Theorem \ref{thm:contraction principle}), for any fixed $0<\th_1<\dots<\th_{d+1}=1$, the re-scaled height function $(h_N(\th_1),\dots,h_N(\th_{d+1}))\in\RR^{d+1}$ under the stationary measure $\mu_N^{(0,\a,\b,0,0)}$ satisfies the large deviation principle with good rate function $\Iabth$. In view of Definition \ref{def:LDP} and the two-layer representation (Theorem \ref{thm:two layer representation}), for any closed subset $C\subset\RR^{d+1}$, 
\be \label{eq:closed fdd rate function 0}
\limsup_{N\to\infty}\frac{1}{N} \log \PP_N^{(0,\a,\b,0,0)} ((h_N[\bl](\th_1),\dots,h_N[\bl](\th_{d+1}))\in  C)\leq -\inf_{x\in C} I^{(\a,\b)}_{\th_1,\dots,\th_{d+1}}(x),
\ee
and for any open subset $U\subset\RR^{d+1}$,
\be\label{eq:open fdd rate function 0}
\liminf_{N\to\infty}\frac{1}{N} \log \PP_N^{(0,\a,\b,0,0)} ((h_N[\bl](\th_1),\dots,h_N[\bl](\th_{d+1})) \in  U)\geq -\inf_{x\in U} I^{(\a,\b)}_{\th_1,\dots,\th_{d+1}}(x).
\ee 

We split the rest of the proof into two steps.\\
	
\noindent\textbf{Step 1.} In this step we show that, for any fixed $0<\th_1<\dots<\th_{d+1}=1$, under the stationary measure $\mu_N^{(q,\a,\b,\c,\d)}$, the re-scaled height function $(h_N(\th_1),\dots,h_N(\th_{d+1}))\in\RR^{d+1}$ of open ASEP satisfies the large deviation principle with the same good rate function $\Iabth$. 

Recall from \eqref{eq:upper bound weight} that $\wt_N^{(q,\a,\b,\c,\d)}(\bl)\leq\wt_N^{(0,\a,\b,0,0)}(\bl)/(q;q)_{\infty}$ for any $\bl\in\mathcal{TL}_N$ and from Lemma \ref{lem:log asy of normalization constant} that the limit of $\frac{1}{N}\log Z_N^{(q,\a,\b,\c,\d)}$ does not depend on $q$. So by \eqref{eq:closed fdd rate function 0}, for any closed subset $C\subset\RR^{d+1}$,
\begin{multline}\label{eq:closed conclude fdd}
\limsup_{N\to\infty}\frac{1}{N} \log \PP_N^{(q,\a,\b,\c,\d)} ((h_N[\bl](\th_1),\dots,h_N[\bl](\th_{d+1}))\in  C)\leq\\
\limsup_{N\to\infty}\frac{1}{N} \log \PP_N^{(0,\a,\b,0,0)} ((h_N[\bl](\th_1),\dots,h_N[\bl](\th_{d+1}))\in  C)
\leq-\inf_{x\in C} I^{(\a,\b)}_{\th_1,\dots,\th_{d+1}}(x).
\end{multline} 

For any open subset $U\subset\RR^{d+1}$, we want to show 
\be\label{eq:only need to show fdd LDP open q}
\liminf_{N\to\infty}\frac{1}{N} \log \PP_N^{(q,\a,\b,\c,\d)} ((h_N[\bl](\th_1),\dots,h_N[\bl](\th_{d+1})) \in  U)\geq -\inf_{x\in U} I^{(\a,\b)}_{\th_1,\dots,\th_{d+1}}(x).
\ee 
In view of Lemma \ref{lem:rate function at boundary}, we only need to show \eqref{eq:only need to show fdd LDP open q} for open subsets $U\subset\cone^\circ$, where $\cone^\circ$ is given by \eqref{eq:interior of cone}. One can observe that if $U=\cup_{i=1}^{\infty}U_i$ and \eqref{eq:only need to show fdd LDP open q} holds for each $U_i$, then it also holds true for $U$. Note that any open subset of $\cone^\circ$ is a countable union of open rectangles of the form $(u_1,v_1)\times\dots\times(u_{d+1},v_{d+1})$, each of which has positive distance to the boundary of $\cone\subset\RR^d$. Such rectangles satisfy
\be 
0< u_{j}-v_{j-1} < v_{j}-u_{j-1} < \th_{j}-\th_{j-1}, j\in\llb1,d+1\rrb.
\ee 
In particular, the assumptions of  Corollary \ref{cor:key estimate} are satisfied. Combining Corollary \ref{cor:key estimate} and Lemma~\ref{lem:log asy of normalization constant},  
\[
\lim_{N\to\infty}\frac{1}{N} \log \frac{\PP_N^{(q,\a,\b,\c,\d)} ((h_N[\bl](\th_1),\dots,h_N[\bl](\th_{d+1})) \in  U)}{\PP_N^{(0,\a,\b,0,0)} ((h_N[\bl](\th_1),\dots,h_N[\bl](\th_{d+1})) \in  U)}=0
\]
for any open rectangle $U=(u_1,v_1)\times\dots\times(u_{d+1},v_{d+1})$ in $\cone^{\circ}$ which has positive distance to its boundary.  
Combining with \eqref{eq:open fdd rate function 0}, we know that \eqref{eq:only need to show fdd LDP open q} holds for such open rectangles $U$. By the arguments above, we conclude that \eqref{eq:only need to show fdd LDP open q} holds for any open subset $U\subset\RR^d$.

Combining \eqref{eq:closed conclude fdd} and \eqref{eq:only need to show fdd LDP open q}, we conclude that under the stationary measure $\mu_N^{(q,\a,\b,\c,\d)}$, the  height function $(h_N(\th_1),\dots,h_N(\th_{d+1}))\in\RR^{d+1}$ satisfies the large deviation principle with good rate function $\Iabth$.\\
	
\noindent\textbf{Step 2.} In this step we use the Dawson--Gärtner Theorem to enhance the large deviation principle from the finite dimensional marginals $(h_N(\th_1),\dots,h_N(\th_{d+1}))$ of $h_N$ under $\mu_N^{(q,\a,\b,\c,\d)}$ to the level of the path space, thus concluding the proof. This argument is similar to \cite[Proof of Theorem 5.1.2]{dembo2009large}.

Define $\mathcal{X}$ to be the space of (arbitrary) functions $f:[0,1]\rightarrow\RR$ such that $f(0)=0$, equipped with the topology of point-wise convergence, i.e., the topology generated by 
$V_{t,x,\delta}:=\{g:[0,1]\rightarrow\RR: g(0)=0, |g(t)-x|<\delta\}$ for all $t\in(0,1]$, $x\in\RR$ and $\delta>0$. 
For any $d\in\mathbb{N}$ and $0<\th_1<\dots<\th_{d+1}=1$, there is a natural projection $\mathcal{X}\rightarrow\RR^{d+1}$ defined by $f\mapsto(f(\th_1),\dots,f(\th_{d+1}))$. Observe that they form a projective system with the partial ordering given by inclusion over the index set
\[\cup_{d=1}^{\infty}\{(\th_1,\dots,\th_{d+1}): 0<\th_1<\dots<\th_{d+1}=1\},\]
and that $\mathcal{X}$ is the projective limit. By the Dawson--Gärtner Theorem (Theorem \ref{thm:Dawson–Gärtner}), the large deviation principle holds for the height profiles $h_N$ under the stationary measure $\mu_N^{(q,\a,\b,\c,\d)}$, considered as random variables taking values in $\mathcal{X}$, with good rate function $\widehat{I}^{(\a,\b)}:\mathcal{X}\rightarrow[0,\infty]$ given by
\be 
\widehat{I}^{(\a,\b)}(f):=\sup_{0<\th_1<\dots<\th_{d+1}=1}\Iabth\lb f(\th_1),\dots,f(\th_{d+1})\rb.
\ee

We denote the subset of (non-strictly) increasing, $1$-Lipschitz continuous functions by $\mathcal{Y}\subset\mathcal{X}$. Then $\mathcal{Y}$ can be seen as a closed subset in the (Hausdorff) topology of $\mathcal{X}$. 
The height profiles $\{h_N\}_{N=1}^{\infty}$ are random variables taking values in $\mathcal{Y}$. Using \eqref{eq:def of LDP first ineq} for $U=\mathcal{X}\setminus\mathcal{Y}$ we conclude that $\widehat{I}^{(\a,\b)}=\infty$ on $\mathcal{X}\setminus\mathcal{Y}$. In view of $\mathcal{Y}\subset C_0\lb[0,1],\RR\rb\subset\mathcal{X}$ and using Lemma \ref{lem:Lemma 4.1.5(b)}, the large deviation principle holds on $C_0\lb[0,1],\RR\rb$ with good rate function $\widehat{I}^{(\a,\b)}$, but with the topology on $C_0\lb[0,1],\RR\rb$ induced from $\mathcal{X}$, which is coarser than the supremum norm topology. 
By the Arzelà–Ascoli theorem, $\mathcal{Y}$ is a compact subset in the supremum norm topology. Since the height profiles take values in $\mathcal{Y}$, they induce an exponentially tight sequence of measures
(see \cite[page 8]{dembo2009large} for the definition of exponential tightness). Then by Lemma \ref{lem:Corollary 4.2.6}, the large deviation principle with good rate function $\widehat{I}^{(\a,\b)}$ can be strengthened to  $C_0\lb[0,1],\RR\rb$ with the supremum norm topology. Since $\widehat{I}^{(\a,\b)}$ only depends on $\a$ and $\b$, in view of \cite{bryc2024two} that Theorem \ref{thm:LDP main thm} holds for $\c=\d=q=0$ and the uniqueness of rate function (Lemma \ref{lem:uniqueness}), we have $\widehat{I}^{(\a,\b)}=\Iab$, where $\Iab$ is the rate function defined in the statement of Theorem \ref{thm:LDP main thm}. The proof is concluded. 
\end{proof}

\bibliographystyle{goodbibtexstyle}
\bibliography{twolayerlimitshape}

\end{document}